\documentclass[10pt,a4paper]{scrartcl}

\usepackage[utf8]{inputenc}
\usepackage{amsbsy}
\usepackage{amsmath}
\usepackage{amssymb}
\usepackage{amsthm}
\usepackage{dsfont}
\usepackage{enumitem}
\usepackage{import}
\usepackage{mathtools}
\usepackage{microtype}
\usepackage{overpic}
\usepackage{xcolor}
\usepackage{bbm}

\usepackage[unicode,pdfborder={0 0 0}]{hyperref}
\usepackage[capitalize,noabbrev]{cleveref}

% Prevent line breaks in inline formulas
% http://tex.stackexchange.com/questions/93940/why-do-i-have-overfull-hbox-when-i-use-inline-math-without-line-breaks
\relpenalty=10000
\binoppenalty=10000

\mathtoolsset{centercolon}

\newlist{assumptionList}{enumerate}{1}
\setlist[assumptionList]{resume,label={\,C\arabic*.},ref={C\arabic*},format=\textbf,labelindent=0em,leftmargin=*}
\crefname{assumptionListi}{Assumption}{Assumptions}

% We want to capitalize Figure 5, Lemma 42, ... but not equation (17).
\crefname{equation}{equation}{equations}

\newcommand{\diff}{\,\mathrm d}
\newcommand{\indicator}{\mathds{1}}
\newcommand{\EE}{\mathds{E}}
\newcommand{\NN}{\mathds{N}}
\newcommand{\PP}{\mathds{P}}
\newcommand{\QQ}{\mathds{Q}}
\newcommand{\RR}{\mathds{R}}

\newcommand{\scE}{\mathcal{E}}
\newcommand{\scF}{\mathcal{F}}

\newcommand{\scI}{\mathcal{I}}
\newcommand{\scL}{\mathcal{L}}
\newcommand{\scS}{\mathcal{S}}
\newcommand{\scW}{\mathcal{W}}

\newcommand{\boldalpha}{\boldsymbol{\alpha}}
\newcommand{\boldh}{\boldsymbol{h}}
\newcommand{\boldw}{\boldsymbol{w}}

\DeclarePairedDelimiter\paren{\lparen}{\rparen}
\DeclarePairedDelimiter\brackets{[}{]}
\DeclarePairedDelimiter\braces{\{}{\}}

\DeclarePairedDelimiter\abs{\lvert}{\rvert}
\DeclarePairedDelimiter\norm{\lVert}{\rVert}

\DeclarePairedDelimiterX{\closedStochasticInterval}[1]{[}{]}{\!\delimsize[#1\delimsize]\!}
\DeclarePairedDelimiterX{\leftOpenStochasticInterval}[1]{]}{]}{\!\delimsize]#1\delimsize]\!}
\DeclarePairedDelimiterX{\rightOpenStochasticInterval}[1]{[}{[}{\!\delimsize[#1\delimsize[\!}

\newcommand{\hittingTimeFromTo}[2]{H^{#1 \,\to\, #2}}

\newcommand{\transpose}{\top}

\newcommand{\Var}{\operatorname{Var}}

\newcommand{\correlationBW}{\rho}
\newcommand{\transformThetaToR}{\mathbbm{r}}

\newcommand{\impactVolatility}{\hat\sigma}
\newcommand{\boundary}{\mathbbm{y}}

\newcommand{\proceedsProcess}{L}

\newcommand{\assetsProcess}{\Theta}
\newcommand{\baseS}{\bar S}

\newcommand{\monotoneStrategies}[1]{\mathcal{A}(#1)}
\newcommand{\reflectionStrategy}[2]{A^\text{refl}(#1,#2)}
\newcommand{\bdryV}{V_\text{bdry}}
\newcommand{\waitV}{V^\scW}
\newcommand{\bothSellV}{V^\scS}
\newcommand{\hardSellV}{V^{\scS_1}}
\newcommand{\easySellV}{V^{\scS_2}}

\newcommand{\lagrangeMultiplier}{m}
\newcommand{\functionalVariation}{\mathrm{D}}

\newtheoremstyle{boldremark}
	{\topsep}   % space above   {\dimexpr\topsep/2\relax}
	{\topsep}   % space below
	{}          % body font
	{}          % indent amount
	{\bfseries} % theorem head font
	{.}         % punctuation after theorem head
	{.5em}      % space after theorem head
	{}          % theorem head spec. (empty = "normal")

\newtheorem{theorem}{Theorem}[section]
\newtheorem{proposition}[theorem]{Proposition}
\newtheorem{lemma}[theorem]{Lemma}

\theoremstyle{definition}
\newtheorem{definition}[theorem]{Definition} 
\newtheorem{assumption}[theorem]{Assumption}

\theoremstyle{boldremark}

\newtheorem{remark}[theorem]{Remark}

\numberwithin{equation}{section}

\newcommand{\squeeze}[2][0]{%
  \mbox{$\medmuskip=#1mu\displaystyle#2$}%
}

\title{Optimal Liquidation under Stochastic~Liquidity} 

\author{
	Dirk Becherer\thanks{All authors would like to thank Wolfgang Runggaldier (Co-Editor) and the anonymous two referees and Associate Editor for very helpful comments.}
	, 
	Todor Bilarev\thanks{T.\ Bilarev gratefully acknowledges support by German Science foundation DFG via Berlin Mathematical School BMS and research training group RTG1845 StoA.}
	,
	Peter Frentrup\footnote{Email addresses: becherer,bilarev,frentrup@math.hu-berlin.de}
\\
	Institute of Mathematics, Humboldt-Universität zu Berlin
}

\begin{document}

\maketitle

\begin{abstract}
We solve explicitly a two-dimensional singular control problem of finite fuel type for infinite time horizon. The 
 problem stems from 
the optimal liquidation of an asset position in a financial market with multiplicative and transient price impact. Liquidity is
stochastic in that the volume effect process, which determines the inter-temporal resilience of the market in spirit of \cite{PredoiuShaikhetShreve11},
is taken to be stochastic, being driven by own random noise.
The optimal control is obtained as 
the 
local time of a diffusion process reflected at a non-constant free boundary.
To solve the HJB variational inequality and prove optimality, we need a combination of probabilistic arguments and
calculus of variations methods, involving   
 Laplace transforms
of inverse local times for diffusions reflected at elastic boundaries.

\textbf{Keywords}:
	Stochastic liquidity,
	transient price impact,
	optimal liquidation,
	stochastic volume effect,
	singular control,
	finite-fuel problem,
	free boundary,
	inverse local time,
	elastic reflection
	
\textbf{MSC2010 subject classification}:
	35R35, 49J40, 49L20, 60H30, 60J50, 60J55, 93E20, 91G80
	
\textbf{JEL classification}:
	C02, C61, D99, G12, G33
\end{abstract}

\section{Introduction}

A typical stochastic optimal control problem in models of illiquid markets is a large trader (the controller) who optimizes her trading strategy such as to balance some trading objective against her adverse price impact, which causes (non-proportional) cost from illiquidity. 
In the majority of literature on price impact models the inter-temporal impact is typically a deterministic function of the strategy of the (single) large trader. 
In reality, we would rather expect some aspects of market liquidity (where \cite{Kyle85} has distinguished resilience, depth and tightness)
 to vary stochastically over time, 
and a sophisticated trader to adapt her optimal strategy accordingly. 
Even for the extensively studied problem of optimal liquidation, there are relatively few recent articles on models in continuous time where the optimal liquidation strategy is adaptive to random changes in liquidity, cf.\
\cite{%
	Almgren12,%
	LorenzSchied13,%
	FruthSchonUrusov17,%
           GraeweHorstSere13,%
	HorstGraewe16}.

We consider a model where temporary market imbalances
involve {\em own stochasticity}.
Price impact is transient, i.e.~it could be persistent but eventually vanishes over time. Moreover, it is non-linear, corresponds to a general shape for the density of the  limit order book (see \cref{rmk: LOB perspective}),  and is multiplicative to ensure positive risky asset prices.
More precisely, our price process $S = (S_t)_{t \ge 0} = \paren{f(Y_t) \baseS_t}_{t \ge 0}$ observed in the market deviates by a factor $f(Y_t)$ from the fundamental price $\baseS_t$ that would prevail in the absence of large traders.
The impact function $f$ is positive and increasing and thus the multiplicative structure ensures that prices stay positive, in contrast to the additive models where a conceptual drawback is that negative asset prices can occur with (small) positive probability.
Our stochastic impact process $Y$ is of a controlled Ornstein-Uhlenbeck (OU) type, namely it is driven by a Brownian motion and the large trader's holdings in the risky asset (see eq.~\eqref{def: impact dynamics} below). The mean-reversion of $Y$ models the transience of impact.
Analogously to  \cite{AlfonsiFruthSchied10,PredoiuShaikhetShreve11}, the impact function $f$ can be linked to the shape of a limit order book (LOB) and $Y$ may be understood as a volume effect process describing the (temporal) imbalance in the LOB, see \cref{rmk: LOB perspective}. The additional noise in $Y$ gives a stochastic LOB, or it can be seen as the accumulated effect from other
non-strategic large traders, see \cref{rmk:stochasticity}.

For our multiplicative model with 
transient impact, we take the fundamental price $\baseS$  to be an exponential Brownian motion and permit for 
non-zero 
correlation with the stochastic volume effect process $Y$.
In this setup, we study the optimal liquidation problem for infinite time horizon as a singular stochastic control problem of finite fuel type and construct its explicit solution. 
Our main result, \cref{thm:optimal strategy}, gives the optimal strategy as the local time 
process of a diffusion reflected obliquely at a curved free boundary in $\RR^2$, the state space being the impact level and the holdings in the risky asset. 
The stochasticity of our optimal strategy arises from its adaptivity to the transient component of the price dynamics and is of local time type. In contrast to the models with additive price impact where the martingale part of the fundamental price is irrelevant for a risk-neutral trader, here the volatility of $\baseS$ is relevant, cf.~\cref{rmk:vol of baseS}.

We solve the singular control problem by explicitly constructing the value function as a classical solution of the HJB variational inequality. Our verification arguments differ from a more common 
approach (outlined in \cref{rmk: calculus of variations approach crucial}) since we were not able  to verify the optimality more directly,
due to the technical complications arising from the implicit nature of the eigenfunctions of the infinitesimal generator for the OU process (see \cref{rmk:conj about par cyl funs}). 
In contrast, we first restrict the set of optimization strategies to those described by diffusions reflected at monotone boundaries, and optimize over the set of possible boundaries. 
To be able to apply methods from calculus of variations, we derive an explicit formula (eq.~\eqref{eq: Laplace transform of inv loc time}) for the Laplace transform of the inverse local times of diffusions reflected at elastic boundaries, i.e.~boundaries which vary with the local time that the reflected process has spent at the boundary, and employ a change of coordinates.
By solving the calculus of variations problem, we construct the candidate optimal free boundary and, moreover, show (one-sided) local optimality in the sense of \cref{thm: one-sided optimal y}. 
The latter is crucial for our verification of optimality.

The paper is organized as follows. 
\cref{sec:heuristics}  states 
the solution 
for the singular stochastic control problem posed in \Cref{sec:problem setting}, 
and outlines the
general 
course of arguments  to come.
In \cref{sec: boundary}, a calculus of variations problem is posed,  by restricting to strategies given by diffusions reflected at smooth  boundaries.
The  free boundary is thereby constructed in \cref{sec:free boundary}.
By solving the HJB variational inequality \eqref{eq:Variational}, we prove optimality and derive the value function and the optimal control in \cref{sec:value fct}.

\section{The model and the optimal control problem}
\label{sec:problem setting}
We consider a filtered probability space $(\Omega, \scF, (\scF_t)_{t\ge0}, \PP)$ with two correlated Brownian motions $W$ and $B$ with correlation coefficient $\correlationBW \in [-1, 1]$, such that
\begin{equation*}
	[W, B]_t = \correlationBW t \,, \quad  t\geq 0.
\end{equation*}
for the quadratic co-variation of $W$ and $B$. The filtration $(\scF_t)_{t\ge0}$ is assumed to satisfy 
 the usual conditions of completeness and right continuity,  so we can take c\`adl\`ag versions for semimartingales. For notions from stochastic analysis we refer to \cite{JacodShiryaev2003_book}.

We consider a market with a risky asset, in addition to the riskless nu\-m\'eraire asset whose (discounted) price is constant at $1$.
The large investor holds $\assetsProcess_t \ge 0$ shares of the risky asset at time $t$. 
She may liquidate her initial position of $\assetsProcess_{0-}$ shares by trading according to
\begin{equation*}
	\assetsProcess_t := \assetsProcess_{0-} - A_t \,,
\end{equation*}
where $A$ is a predictable, c\`adl\`ag, monotone process, describing the cumulative number of assets sold up to time $t$. 
We define the set of admissible strategies
as
\begin{equation*}
	\begin{split}
		\monotoneStrategies{\assetsProcess_{0-}} := \{ A : {}&A \text{ non-decreasing, c\`adl\`ag, predictable,}
	\\		&\text{with } 0=:A_{0-} \le A_t \le \assetsProcess_{0-} \}.
	\end{split}
\end{equation*}
The unaffected fundamental price $\baseS=(\baseS_t)_{t\ge 0}$   of the risky asset evolves according to
\begin{equation} \label{def:unaffected-price}
	\diff\baseS_t = \mu \baseS_t \diff t + \sigma \baseS_t \diff W_t \, , \quad \baseS_0\in (0,\infty), \text{ with $\sigma>0$, $\mu\in \RR$},
\end{equation} 
as a geometric Brownian motion, in the absence of perturbations by large investor trading. 
By  trading, however, the large investor has 
market impact on the actual price
\begin{equation} \label{Sprocdef}
	S_t := f(Y_t) \baseS_t \,,
\end{equation}
of the risky asset through some impact process $Y$, by an increasing positive smooth function $f>0$
with $f(0) = 1$. 
The process $Y$ can be interpreted as a volume effect process, representing the 
 transient volume displacement by large trades in a limit order book (LOB) whose
shape corresponds to the price impact function $f$, 
see \cref{rmk: LOB perspective}.
For $\impactVolatility > 0$ the effect from perturbations $\impactVolatility \diff B_t - \!\diff A_t$  on the  process 
\begin{equation} \label{def: impact dynamics}
	\diff Y_t = -\beta Y_t \diff t + \impactVolatility \diff B_t - \diff A_t \,, \quad Y_{0-} = y,
\end{equation}
is transient over time, in that $Y$ is mean reverting towards zero with mean reversion rate $\beta > 0$.
Existence and uniqueness of a strong solution to \eqref{def: impact dynamics} are guaranteed for instance by \cite[Thm.~4.1]{PangTalrejaWhitt2007}.
Sometimes we shall write $Y^{y,A}$ to stress the dependence of $Y$ on its initial state $y$ and the strategy $A$.
The dynamics of $Y$ are of Ornstein-Uhlenbeck type, driven by $\impactVolatility \diff B - \!\diff A$.
The mean-reversion property of the OU process has the financial interpretation that in the absence of activity from the large trader, the impact lessens since $Y$ reverts back to the neutral state zero and hence the price recovers to the fundamental price $\baseS$, thus modeling the transient component of the impact (in absolute terms).

For $\gamma\ge 0$,  the $\gamma$-discounted proceeds up to time $t$ from a liquidation strategy $A$ are
\begin{equation}\label{def: proceeds}
	\proceedsProcess_t(y;A) := \int_0^t \! e^{-\gamma u} f(Y_u) \baseS_u \diff A^c_u + \sum_{\substack{0 \le u \le t \\ \Delta A_u \ne 0}} e^{-\gamma u} \baseS_u \int_0^{\Delta A_u} \!\! f(Y_{u-} - x) \diff x,
\end{equation}
for $t \ge 0$, where $A_t = A^c_t + \sum_{u\le t}\Delta A_u$ is the (pathwise) decomposition of $A$ into its continuous and pure-jump part, 
and $Y=Y^{y,A}$ solves \eqref{def: impact dynamics}.
Jump terms in \eqref{def: proceeds} can be justified from a LOB perspective (cf.\ \cref{rmk: LOB perspective} below) or by stability results, 
see \cite[Sect.~5.3]{BechererBilarevFrentrup-model-properties} for details.
In particular, if $A^n \to A$ converges in the Skorokhod $M_1$ topology in probability for, e.g., continuous strategies $A^n$ and possibly non-continuous $A$, then the above definition ensures that $\proceedsProcess(y;A^n) \to \proceedsProcess(y;A)$ in $M_1$ in probability.

As $\proceedsProcess$ is an increasing process, the limit $\proceedsProcess_\infty:=\lim_{t\to\infty} \proceedsProcess_t$ exists. 
The large trader's objective is to maximize expected (discounted) proceeds over an infinite time horizon,
\begin{equation}\label{eq:def of value fn v}
	\max_{A \in \monotoneStrategies{\assetsProcess_{0-}} } \EE[\proceedsProcess_\infty(y;A)]\,\quad \text{ with}\quad v(y,\theta):= \sup_{A \in \monotoneStrategies{\theta} } \EE[\proceedsProcess_\infty(y;A)],
\end{equation}
where $v(y,\theta)$ denotes the value function for $y\in\RR$ and $\theta \in [0,\infty)$.

\begin{remark}\label{rmk:v is monotone}
The value function $v$ is increasing in $y$ and $\theta$. 
Indeed, monotonicity in $\theta$ follows from $\monotoneStrategies{\theta_1}\subset \monotoneStrategies{\theta_2}$ for $\theta_1 \leq \theta_2$. For monotonicity in $y$, note that for  $y_1 \leq y_2$ and any strategy $A\in \monotoneStrategies{\theta} $ one has $Y^{y_1,A}_t \leq Y^{y_2,A}_t$ for all $t$,
implying
$\proceedsProcess_t(y_1;A) \leq \proceedsProcess_t(y_2;A)$.
\end{remark}
\noindent
For the rest of the paper,
the function $f$ and scalars $\beta,\mu,\gamma,\sigma,\correlationBW, \impactVolatility$ satisfy
\begin{assumption}\label{ass:modelpara}
~
\begin{assumptionList}
	\item \label{ass:decreasing prices}
		We have $\delta:=\gamma-\mu >0 $, that means the drift coefficient $-\delta \baseS\,$ for the $\gamma$-discounted fundamental price $e^{-\gamma t}\baseS_t$ is negative.

	\item \label{ass:uniqueness of y0}
		The impact function $f\in C^3(\RR)$ satisfies 
		$f,f' > 0$ and $(f'/f)' < (\Phi'/\Phi)'$, where 
		\begin{equation}\label{eq:Phi via H}
			\Phi(x) :=\Phi_\delta(x) := H_{-\delta/\beta} \paren[\Big]{ \paren{ \sigma \correlationBW \impactVolatility - \beta x} / \paren[\big]{\sqrt{\beta} \impactVolatility } },
		\end{equation}
		with Hermite function $H_\nu$ (cf.~\cite[Sect.~10.2]{Lebedev1972special}) and $\sigma,\impactVolatility,\beta > 0$ and $\correlationBW \in [-1,1]$.
		
	\item \label{ass:uniqueness of yinf}
		The impact function $f$ furthermore satisfies $(f'/f)' < (\Phi''/\Phi')'$.

	\item \label{ass:lambda is bounded} 
       The function $\lambda(y):= f'(y)/f(y)$, $y\in \RR$, is bounded, i.e.\ there exists $\lambda_{\max}\in (0,\infty)$ such that  $0 < \lambda(y) \le  \lambda_{\max}$ for all $y \in \RR$. 

	\item \label{ass:decreasing k}
		The function $k(y) := \frac{\impactVolatility^2}{2} \frac{f''(y)}{f(y)} - (\beta + \delta) + (\sigma \correlationBW \impactVolatility - \beta y) \frac{f'(y)}{f(y)}$ is strictly decreasing.
		
	\item \label{ass:existence of y0 and yinf}
		There exist $y_0$ and $y_\infty\in \RR$ such that $(f'/f)(y_0) = (\Phi'/\Phi)(y_0)$ and $(f'/f)(y_\infty) = (\Phi''/\Phi')(y_\infty)$ holds.
	
\end{assumptionList}
\end{assumption}

\cref{ass:modelpara} is satisfied by e.g.\ $f(y) = \exp(\lambda y)$ with $\lambda \in (0, \infty)$, cf.~\cref{lemma: Turan like inequality for Phi} below. 
See \cite[Section~2.1]{BechererBilarevFrentrup2016-deterministic-liquidation} for the shape of the related multiplicative LOB.
Note that $\Phi$ is (up to a constant factor) the unique positive and increasing 
solution of the ODE $ \frac{\impactVolatility^2}{2} \Phi''(y) + (\sigma \correlationBW \impactVolatility - \beta y)\Phi'(y) - \delta \Phi(y) =0$. 

The overall negative drift in \cref{ass:decreasing prices} ensures that the  optimization problem
on an infinite time horizon has a finite value. 
\Cref{ass:uniqueness of y0,ass:uniqueness of yinf} imply uniqueness of the (boundary) points $y_0$ and $y_\infty$ from \cref{ass:existence of y0 and yinf} which are needed in \cref{lemma: boundary}. 
While \ref{ass:uniqueness of yinf}, uniqueness of $y_\infty$, is not crucial there, it will be needed in \eqref{ineq:negative k in sell region} for the verification.
The bound on $\lambda$ in \cref{ass:lambda is bounded} is used to show some growth condition on the value function in \cref{lemma:V is smooth}, that is required to apply the martingale optimality principle (\cref{prop: supermatringale suffices}).
\Cref{ass:decreasing k} is needed for the verification \cref{lemma:inequality in sell region}.

Let us now comment on the model and its financial interpretation.

\begin{remark}\label{rmk: LOB perspective}
We explain how the price impact function $f$ can be interpreted in terms of a (static) {\em multiplicative}  limit order book (LOB)
and $Y$ can be viewed as a {\em volume effect process} in spirit of \cite{PredoiuShaikhetShreve11},
which in our context relates the {\em relative} price impact to transient imbalances of volume.
To this end, let us recall the connection between price impact function $f$ and the (general) density of a LOB, see \cite[Sect.~2.1]{BechererBilarevFrentrup2016-deterministic-liquidation} for more detail and examples.
For relative price perturbations $r_t:= S_t / \baseS_t$, let $q(r)\diff r$ denote the density of offers available at price $r \baseS_t$.
We call the (signed) measure induced by $q$ the multiplicative  limit order book. 
Its cumulative distribution function is $Q(r):= \int_1^r q(x) \diff x$.
The total volume of assets available for prices in some (interval) range $\{ r \baseS_t : r \in R \}$ with measurable $R \subset (0,\infty)$ is $\int_R q(x) \diff x$.
So, a block sale of size $\Delta A_t > 0$ at time $t$ moves the price from $r_{t-}\baseS_t$ to $r_t \baseS_t$ such that the volume changes according to $Q(r_t) = Q(r_{t-}) - \Delta A_t$, giving (discounted) proceeds $e^{-\gamma t} \baseS_t \int_{r_t}^{r_{t-}} x \diff Q(x)$.
In the terminology of \cite{Kyle85}, $Q(r_t) - Q(r_{t-})$ reflects the \emph{depth} of the LOB for price changes by a factor of $r_t/r_{t-}$.
A change of variables with $Y_t:= Q(r_t)$ and $f:= Q^{-1}$ yields the jump term in \eqref{def: proceeds}.
In this sense, $Y$ denotes the effect from the past and present trades on the volume displacement in the  LOB.
By the drift in \eqref{def: impact dynamics}, 
this effect is  persistent over time but not permanent. Its transient nature relates to the liquidity property that \cite{Kyle85} calls resilience. 
Note that in our model the resilience is stochastic in the sense that the volume effect process $Y$   in \eqref{def: impact dynamics} is, whereas the resilience rate $\beta$ is constant (differently e.g.\ to \cite{HorstGraewe16}).

\end{remark}

\begin{remark}\label{rmk:stochasticity}
Stochasticity may account for variations of transient impact that 
cannot be entirely explained by the single agent's own trading activity, and thus not solely described by deterministic functional modeling. 

(a)	
Most of the
literature on transient  impact considers impact that is a deterministic function of the actions of a single large trader.
We consider here an application problem for an individual large trader, but we do not want to assume that she is the only large trader in the market,  or that she is as an aggregate of all large traders (a possibility mentioned in \cite{Frey1998}).
The additional stochastic noise term $\impactVolatility \diff B_t$  in \eqref{def: impact dynamics} can be understood as the aggregate influence on the impact by other large `noise' traders (acting non-strategically).
Questions on strategic behavior between multiple traders
(like in \cite{SchiedZhang15}) 
are interesting  but  beyond the present paper.

(b)
Note that the volatility and as well  the drift of the (marginal) price process $S= f(Y_t) \baseS_t $ from \eqref{Sprocdef}, at which (additional infinitesimally) small quantities of the risky assets would be traded, are stochastic via the additional stochastic component of  $Y$.
Furthermore, we emphasize that the form of relative price impact function $\Delta \mapsto f(Y_{t-} + \Delta)/f(Y_{t-})$
 can vary with $Y$  in general. In the sense of \cref{rmk: LOB perspective}, this means the general shape of
 the corresponding  LOB can exhibit stochastic variations from the large trader's perspective.  

(c)
Recently, \cite{LehalleNeuman17} suggested to model a signal, which predicts the short-term
evolution of prices, as an Ornstein-Uhlenbeck process that modulates the drift of the
price dynamics.
One can interpret stochasticity of $Y$ as such a signal as follows.
For $\lambda = f'/f$ being constant, the log-price can be written as 
$\log S = (\log \baseS + \lambda Y^{\text{sig}}) + \lambda Y^{\text{trans}, \assetsProcess}$, 
where $Y^{\text{sig}}$ is a mean-reverting signal with $\diff Y^{\text{sig}}_t = -\beta Y^{\text{sig}}_t \diff t + \impactVolatility \diff B_t$ and $Y^{\text{trans}, \assetsProcess}$ is the transient impact from trading with $\diff Y^{\text{trans}, \assetsProcess} _t = -\beta Y^{\text{trans}, \assetsProcess} _t \diff t + \diff \assetsProcess_t$.
From this perspective, the optimal liquidation strategy will be adaptive to the signal and depend on the correlation between the signal and $\log \baseS$, 
see \cref{thm:optimal strategy} and \cref{rmk:vol of baseS}.
\end{remark}

\begin{remark}\label{rmk:mesomodel}
Noting that a bid-ask spread is not modeled explicitly and price impact $f$ (i.e.\ the  LOB shape) is static, 
we consider the model as being at a mesoscopic level for low-frequency problems, rather than for market microstructure effects in high frequency.
At this level and as pointed out in \cite[Rmk.\ 2.2]{AlfonsiKloeckSchied16}, it is sensible to think of  price impact and liquidity costs as being aggregated over various types of orders.  The  LOB from \cref{rmk: LOB perspective} should be interpreted accordingly. 
Note that in this paper we deal with monotone strategies and thus only one (bid) side of the LOB is relevant.
Considering infinite time horizon can be viewed as approximation for a longer horizon with more analytic tractability. 
Concerning the question of comparison with additive models for transient impact,  positivity of asset prices is desirable from a theoretical point of view,  relevant for applications with longer time horizons (as they may occur e.g.\ for large institutional trades, cf.\ e.g.\ \cite{ChanL95}, or for hedging problems with longer maturities), and appears to fit better to common models with multiplicative price evolutions like \eqref{def:unaffected-price}. See \cite[Example~5.4]{BechererBilarevFrentrup2016-deterministic-liquidation} for a more detailed discussion  and further references.
\end{remark}

\section{The optimal strategy and how it will be derived}
\label{sec:heuristics}

This section states the main theorem which describes the solution to the singular stochastic control problem,
  and outlines afterwards the general course of arguments for proving it in the subsequent sections.
To explain ideas, let us first motivate how the variational inequality \eqref{eq:Variational}, being the dynamical programming 
equation for the optimization problem at hand,  is readily suggested by an application of the martingale optimality principle. 
To this end, consider for an admissible strategy $A$ the process 
\begin{equation}\label{eq:process G}
	G_t(y;A) := \proceedsProcess_t(y;A) + e^{-\gamma t} \baseS_t V(Y_t,\assetsProcess_t),
\end{equation}
where $G_{0-}(y;A) = \baseS_0  V(Y_{0-}, \assetsProcess_{0-})$ and $V \in C^{2,1}(\RR\times [0,\infty); [0,\infty))$ is some function.
Suppose $V$ can be chosen such that $G$ is a supermartingale.
Then one  should have 
\begin{align*}
	\baseS_0 V(y,\assetsProcess_{0-}) 
		&= \EE[G_{0-}(y;A)] 
	\\
		&\geq \lim_{T\rightarrow \infty}\EE[\proceedsProcess_{T}(y;A)] + \lim_{T \rightarrow \infty} e^{-\gamma T}\EE[\baseS_T V(Y_T, \assetsProcess_{T})]
	\\
		&= \EE[\proceedsProcess_\infty(y;A)]
\end{align*}
heuristically, provided that the second summand on the right-hand side converges to 0.
Hence, for $V$ being such that $G$ is a supermartingale for every admissible strategy $A$ and a martingale for at least one strategy $A^*$, one can conclude that $V$ is essentially the value function for \eqref{eq:def of value fn v} (modulo the factor $\baseS_0$).
To describe  $V$, one may apply  It\^o's formula to get
\begin{align}\label{eq:Ito for G}
\begin{split}
	\diff G_t = e^{-\gamma t} \baseS_t \Bigl( & \impactVolatility V_y(Y_{t-},\assetsProcess_{t-}) \diff B_t + \sigma V(Y_{t-},\assetsProcess_{t-}) \diff W_t
\\		{}+{}& \paren[\big]{ (\mu-\gamma)V +(\sigma \correlationBW \impactVolatility - \beta Y_{t-}) V_y + \tfrac{\impactVolatility^2}{2} V_{yy} }(Y_{t-},\assetsProcess_{t-}) \diff t
\\		{}+{}& \paren[\big]{ f - V_y - V_\theta }(Y_{t-},\assetsProcess_{t-}) \diff A^c_t
\\		{}+{}& \int_0^{\Delta A_t} \paren[\big]{ f - V_y - V_\theta }(Y_{t-} - x,\assetsProcess_{t-} - x) \diff x \Bigr).
\end{split}
\end{align}
Define, with $\delta= \gamma - \mu$, a differential operator on  $C^{2,0}$ functions $\varphi$ by
\[
	\scL \varphi(y,\theta) := \frac{\impactVolatility^2}{2}\varphi_{yy}(y,\theta) + (\sigma \correlationBW \impactVolatility - \beta y) \varphi_y(y,\theta) - \delta \varphi(y,\theta).
\]
By \cref{eq:Ito for G}, solving the Hamilton-Jacobi-Bellman (HJB) variational inequality
\begin{equation} \label{eq:Variational}
	0 = \max \{ f - V_y - V_\theta \ ,\  \scL V \} 
	\quad 
	\text{with
		$V(y, 0) = 0$, $y\in \RR$,}
\end{equation}
would suffice for $G$  to be a local (super-)martingale.
This suggests the existence of a \textit{sell region}  $\scS$ (action region) where the $\diff A$-integrand $f - V_y - V_\theta$ is zero and it is optimal to trade (i.e.\ sell), and a  \textit{wait   region}  $\scW$  (inaction region) in which the $\diff t$-integrand $\scL V$ is zero and it is optimal 
not to trade.
Assume that the two regions 
\begin{align*}
	\scS &= \{ (y,\theta) \in \RR \times (0,\infty) : \boundary(\theta) < y \}
	\quad\text{and}
	\\
	\scW &= \{ (y,\theta) \in \RR \times (0,\infty) : y < \boundary(\theta) \}
\end{align*}
are separated by a free boundary $\{(y,\theta) : y=\boundary(\theta)\}$.
An optimal strategy, i.e.~a strategy for which $G$ is a martingale, would be described as follows: if $(Y_{0-},\assetsProcess_{0-}) \in \scS$, then perform a block sale of size $\Delta A_0$ such that 
\((Y_0,\assetsProcess_0) = (Y_{0-} - \Delta A_0, \assetsProcess_{0-} - \Delta A_0) \in \partial \scS\,.\)
Thereafter, if $\assetsProcess_0>0$, sell just enough as to keep the process $(Y,\assetsProcess)$ within $\overline \scW$.
In this way, the process $(Y,\assetsProcess)$ should be described by a diffusion process that is reflected at the boundary $\partial\scW \cap \partial \scS$ in direction $(-1,-1)$, i.e.~there is  waiting in the interior and selling at the boundary until all shares are sold, when $(Y,\Theta)$ hits $\{ (y,0) : \boundary(0)\le y \}$. 
For such reflected diffusions, exsistence and uniqueness follow from classical results, see \cref{rmk: existence+uniqueness of strong RSDE solution}, and \cref{lemma: Laplace transform of inv loc time} provides important characteristics which are key to the subsequent construction of the optimal control.
The solution of the optimal liquidation problem is indeed described by the local time process of a diffusion reflected at a boundary which is explicitly given by an ODE. 
This main result is stated as \cref{thm:optimal strategy} below.

In the following sections, we will find the value function for our stochastic control problem by constructing a classical solution of the variational inequality \eqref{eq:Variational}. 
Provided that  the key variational inequalities for the (candidate) solution are satisfied, 
optimality can be verified by typical martingale arguments, see \cref{prop: supermatringale suffices}.
Based on results on reflected diffusions from \cref{lemma: Laplace transform of inv loc time}, we reformulate in \cref{sec: boundary} the optimization problem as a (nonstandard) calculus of variations problem. Its solution, derived in \cref{sec:free boundary}, provides a candidate for the free boundary, separating the regions 
of action and inaction, together with the value function on that boundary. 
Moreover, we show a (one-sided) local optimality property of the derived boundary (cf.~\cref{thm: one-sided optimal y}). 
This will be crucial  in \cref{sec:value fct}  (cf.~proof of \cref{lemma:inequality in sell region}) to verify \eqref{eq:Variational} for the candidate value function, constructed there, in order to finally conclude on p.~\pageref{ProofofMainthm} the proof for

\begin{theorem}
	\label{thm:optimal strategy}
	Let \cref{ass:modelpara} be satisfied. 
	Then the ordinary differential equation
	\begin{equation}\label{eq:optimal boundary ode}
		\boundary'(\theta) = \paren[\bigg]{ \frac{ \paren{ (\Phi')^2 - \Phi\Phi''} (f'\Phi' - f\Phi'') / \Phi }{ \paren{ \Phi\Phi'' - (\Phi')^2 } f'' + \paren{ \Phi'\Phi''  - \Phi\Phi''' } f' + \paren{ \Phi'\Phi''' - (\Phi'')^2 } f }}\!\paren[\big]{ \boundary(\theta) }
	\end{equation}
	with initial condition $\boundary(0) = y_0$ admits a unique solution $\boundary:[0,\infty) \rightarrow \RR$, that is strictly decreasing and maps $[0, \infty)$ bijectively to $(y_\infty, y_0]$, for $y_0$ and $y_\infty$ from \cref{ass:existence of y0 and yinf}.
		
	The boundary function $\boundary$ characterizes the solution of problem~\eqref{eq:def of value fn v} as $A^* =(\Delta + K)\indicator_{\closedStochasticInterval{0, \tau}}$, where $\Delta := \assetsProcess_{0-}\indicator_{\{Y_{0-}\ge y_0 + \assetsProcess_{0-}\}} +  \tilde{\Delta}\indicator_{\{Y_{0-}< y_0 + \assetsProcess_{0-}, \tilde{\Delta}\ge 0\}}$ with $\tilde{\Delta} \leq \assetsProcess_{0-}$ satisfying $Y_{0-} - \tilde{\Delta} = \boundary(\assetsProcess_{0-} - \tilde{\Delta})$, and where
	 $(Y, K)$ is the unique continuous adapted process on $\closedStochasticInterval{0,\tau}$ with non-decreasing $K$  which solves  the $\boundary$-reflected SDE 
		\begin{align*}
			Y_t &\le \boundary(\assetsProcess_{0-} - \Delta - K_t)\,,
		\\
			\diff  Y_t &= - \beta Y_t \diff t + \impactVolatility \diff B_t  - \diff K_t \,,
		\\
			\diff K_t &= \indicator_{\braces{  Y_t = \boundary(\assetsProcess_{0-} - \Delta - K_t) }} \diff K_t \,,
		\end{align*}
		starting in $(Y_{0-} - \Delta, 0)$,  for time to liquidation $\tau := \inf\{t\geq 0 : K_t = \assetsProcess_{0-} - \Delta \}$. \\ Moreover, $\tau$ has finite moments.
\end{theorem}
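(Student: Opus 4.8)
The plan is to organize the proof in three phases: (i) establish the ODE-theoretic facts about $\boundary$ — existence, uniqueness, monotonicity, and the exact range $(y_\infty, y_0]$; (ii) construct the candidate value function $V$ from $\boundary$ and the associated data on the boundary, and verify that it is a $C^{2,1}$ classical solution of the HJB variational inequality \eqref{eq:Variational}; (iii) run the martingale verification (via \cref{prop: supermatringale suffices}) to identify $V$ with $v$ and to identify $A^*$ as an optimizer, and finally prove the finite-moment property of $\tau$.

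For phase (i), I would analyze the right-hand side of \eqref{eq:optimal boundary ode}, which is a function of $\boundary(\theta)$ only, call it $\Psi(\boundary(\theta))$. Using \cref{ass:uniqueness of y0,ass:uniqueness of yinf} one shows that the numerator and denominator of $\Psi$ are each sign-definite on a neighbourhood of $[y_\infty, y_0]$ so that $\Psi$ is smooth and strictly negative there; local Lipschitz continuity then gives a unique maximal solution through $\boundary(0)=y_0$, which is strictly decreasing. The key point is to show the solution is global on $[0,\infty)$ and has range exactly $(y_\infty,y_0]$: this should come from showing $\Psi(y)\to 0$ as $y\downarrow y_\infty$ (because $f'\Phi'-f\Phi''$, equivalently $(f'/f)-(\Phi''/\Phi')$, vanishes at $y_\infty$ by \cref{ass:existence of y0 and yinf}) while $\Psi$ stays bounded away from $0$ on compact subsets of $(y_\infty,y_0]$, so that $\theta\mapsto \boundary(\theta)$ cannot reach $y_\infty$ in finite ``time'' but is forced down towards it as $\theta\to\infty$. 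This is essentially a phase-line argument for an autonomous scalar ODE.

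For phase (ii), I would define $V$ on $\overline{\scW}$ by solving, for each fixed $\theta$, the second-order linear ODE $\scL V(\cdot,\theta)=0$ in the $y$-variable on $(-\infty,\boundary(\theta)]$, choosing the solution that is bounded as $y\to-\infty$ (a multiple of $\Phi$, which by \cref{ass:uniqueness of y0} is the unique positive increasing solution) with the proportionality constant fixed by the smooth-fit / value-matching conditions $V_y+V_\theta=f$ along $y=\boundary(\theta)$ coming from the calculus-of-variations construction in \cref{sec: boundary,sec:free boundary}; on $\scS$ one extends $V$ by $V(y,\theta)=V(\boundary^{-1}\text{-type value})+\int f$, i.e.\ so that $f-V_y-V_\theta\equiv0$ there, with $V(y,0)=0$. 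Then I must check the two inequalities: $\scL V\le 0$ on $\scS$ and $f-V_y-V_\theta\le 0$ on $\scW$. The second is where the one-sided local optimality \cref{thm: one-sided optimal y} enters (cf.\ the cited proof of \cref{lemma:inequality in sell region}); the first uses \cref{ass:decreasing k} (monotonicity of $k$) and the definition of $y_\infty$, as flagged in \eqref{ineq:negative k in sell region}. Smoothness of $V$ across the free boundary (the $C^{2,1}$ regularity) follows from \cref{lemma:V is smooth}, which in turn needs the boundedness of $\lambda$ in \cref{ass:lambda is bounded} for the requisite growth estimate.

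For phase (iii), with $V$ verified I apply It\^o's formula \eqref{eq:Ito for G} to $G_t(y;A)$ for an arbitrary admissible $A$: the variational inequality makes all drift and $\diff A$ contributions nonpositive, and the boundedness/growth bounds turn the local martingale into a true supermartingale with the transversality term $e^{-\gamma T}\EE[\baseS_T V(Y_T,\Theta_T)]\to 0$, so $\baseS_0 V(y,\theta)\ge \EE[\proceedsProcess_\infty(y;A)]$; taking the supremum gives $\baseS_0 V\ge \baseS_0 v$. For the reverse inequality, I plug in $A^*$ as described — an initial block sale $\Delta$ to bring $(Y,\Theta)$ onto the boundary (existence and the correct branch of $\tilde\Delta$ following from strict monotonicity of $\boundary$ and of $\theta\mapsto Y_{0-}-\theta$), followed by the $\boundary$-reflected diffusion whose local time $K$ is the Skorokhod reflection (existence/uniqueness from \cref{rmk: existence+uniqueness of strong RSDE solution}); along $A^*$ every integrand in \eqref{eq:Ito for G} vanishes on $\closedStochasticInterval{0,\tau}$, so $G$ is a martingale and equality holds, giving $\baseS_0 V(y,\theta)=\EE[\proceedsProcess_\infty(y;A^*)]=\baseS_0 v(y,\theta)$. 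The finiteness of all moments of $\tau=\inf\{t: K_t=\Theta_{0-}-\Delta\}$ I would get from \cref{lemma: Laplace transform of inv loc time}: the explicit Laplace transform \eqref{eq: Laplace transform of inv loc time} of the inverse local time at an elastic boundary, evaluated along the (bounded, away from $y_\infty$) curve $\boundary$, is finite in a neighbourhood of $0$, which yields exponential moments and in particular all polynomial moments of $\tau$.

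\textbf{Main obstacle.} I expect the hardest part to be phase (ii): proving the global sign inequalities $\scL V\le0$ on $\scS$ and $f-V_y-V_\theta\le0$ on $\scW$ for the candidate $V$ that is only defined implicitly through $\Phi$ (a Hermite function) and the ODE \eqref{eq:optimal boundary ode}. Since the eigenfunctions of the OU generator are not elementary (\cref{rmk:conj about par cyl funs}), these inequalities cannot be checked by brute force and must be reduced — as the introduction indicates — to the one-sided local optimality of $\boundary$ from the calculus-of-variations problem together with the structural \cref{ass:decreasing k,ass:uniqueness of yinf}; wiring those ingredients together cleanly across the free boundary, including the $C^{2,1}$ pasting, is the crux.
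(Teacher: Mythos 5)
Your three-phase architecture coincides with the paper's own route (ODE analysis as in \cref{lemma: boundary}; construction of $V$ and verification of \eqref{eq:Variational}; martingale optimality plus the Laplace transform for the moments of $\tau$), but phase (ii) — which you yourself identify as the crux — is miswired in a way that would make the plan fail as written. You assign the one-sided local optimality (\cref{thm: one-sided optimal y}) to the wait-region inequality $f-V_y-V_\theta\le 0$ on $\scW$, and you claim that $\scL V\le 0$ on $\scS$ follows from \cref{ass:decreasing k} and the definition of $y_\infty$ via \eqref{ineq:negative k in sell region}. In the paper it is exactly the other way around. The wait-region inequality is the elementary one: it follows from the strict monotonicity of $M_1$ (\cref{lemma: M1 monotonicity}, used in \cref{lemma: wait inequality}) and needs neither \cref{thm: one-sided optimal y} nor \cref{ass:decreasing k}; moreover \cref{thm: one-sided optimal y} only allows perturbations with $\boundary\le\tilde\boundary\le y_0$, i.e.\ towards the sell side, so it could not serve that purpose anyway. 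The sell-region inequality, by contrast, needs \emph{both} ingredients at once: \cref{ass:decreasing k} together with \eqref{ineq:negative k in sell region} (which is where \cref{ass:uniqueness of yinf} enters) only yields that $\Delta\mapsto \frac{\diff}{\diff\Delta}\,\scL \hardSellV\paren[\big]{y_b+\Delta,\theta_b+\Delta}$ is decreasing along the oblique direction; one is still left to prove the boundary condition $h'(0+)\le 0$, and it is precisely this step that the paper obtains by contradiction with \cref{thm: one-sided optimal y} (an $\scL V>0$ patch near the boundary would let a perturbed reflection boundary produce strictly larger expected proceeds, using \cref{lemma: Laplace transform of inv loc time} and the martingale machinery). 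As \cref{rmk:conj about par cyl funs} stresses, a direct proof of $h'(0+)\le 0$ from the structural assumptions alone is not available even for constant $\lambda$ — it leads to an open Tur\'an-type conjecture for Hermite functions — so "$\scL V\le 0$ on $\scS$ from \cref{ass:decreasing k} and $y_\infty$" is a genuine gap, not a routine computation.

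Two smaller points. In phase (i), the observation that the right-hand side of \eqref{eq:optimal boundary ode} vanishes at $y_\infty$ does not by itself prevent the solution from reaching $y_\infty$ in finite $\theta$; the paper's argument in \cref{lemma: boundary} uses local Lipschitzness of $M_2/M_1'$ (so that $[0,\infty)\times\{y_\infty\}$ is a trajectory of the autonomous system which the solution cannot cross), plus finiteness of $\boundary^{-1}(y)=\int_{y_0}^{y}(M_1'/M_2)(x)\diff x$ on $(y_\infty,y_0]$ to get $\boundary(\theta)\to y_\infty$ — you should make this explicit rather than appeal only to the vanishing of the field. For the moments of $\tau$, the paper differentiates \eqref{LaplacetrafoTimeToLiquid} at $\alpha=0+$ using analyticity of $\Phi_\alpha$ in the parameter $\alpha$; your claim that \eqref{eq: Laplace transform of inv loc time} is finite in a neighbourhood of $0$ (hence exponential moments) is an unjustified strengthening of what the formula, derived for positive arguments, gives — and it is not needed for the statement.
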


\begin{remark}\label{rmk: Opt strategy explained}
	The optimal control $A^*$ acts as follows: 
	1) If $Y_{0-} \ge y_0 + \assetsProcess_{0-}$, sell everything immediately at time $0$ and stop trading;
	2) Otherwise, if $(\assetsProcess_{0-},Y_{0-})$ is such that $\boundary(\assetsProcess_{0-}) < Y_{0-} < y_0 + \assetsProcess_{0-}$, perform an initial block trade of size $A^*_0 := \Delta > 0$ so that $Y_0 = Y_{0-} - \Delta$ is on the boundary $Y_0 = \boundary(\assetsProcess_0)$.
	Now being in the wait region $\overline{\scW}$, sell as much as to keep with the least effort the state process $(Y,\assetsProcess)$ in $\overline \scW$ until all assets are liquidated at time $\tau$
	(cf.~\cref{fig:impact-and-optimal-strategy-over-time}: waiting e.g.\ at times $t \in [25, 34]$ since then impact $Y_t$ is less than $\boundary(\assetsProcess_t)$).
\end{remark}

The inverse local time $\tau_\ell:= \inf\braces{ t>0 : K_t > \ell }$ is simply how long it takes to liquidate $\ell$ assets (after an initial block sale).
For $\tau > 0$ (case 2 in \cref{rmk: Opt strategy explained}) its Laplace transform is
	\begin{equation}\label{LaplacetrafoTimeToLiquid}
		\EE\brackets[\big]{ e^{-\alpha \tau_\ell} } 
			= \frac{\Phi_{\alpha}\paren{Y_{0} }}{\Phi_{\alpha}\paren{\boundary(\assetsProcess_{0})}} \exp\paren[\bigg]{ \int_0^\ell \paren[\big]{ \boundary'( \assetsProcess_{0} - x) + 1 } \frac{ \Phi_{\alpha}'\paren{ \boundary(\assetsProcess_{0} - x) } }{ \Phi_{\alpha}\paren{ \boundary(\assetsProcess_{0} - x) } } \diff x }
	\end{equation}
	for $\alpha > 0$ and $0\leq \ell\leq \assetsProcess_0=  \assetsProcess_{0-} - \Delta$, as it will be shown in the proof of \cref{thm:optimal strategy}.
	Using analyticity of $\Phi_\alpha$ w.r.t.\ the parameter $\alpha$, one easily gets that $\tau_\ell$ has finite moments.
	Moreover, the Laplace transform \eqref{LaplacetrafoTimeToLiquid} gives access to the distribution of the time to liquidation $\tau$ by efficient numerical inversion, as in e.g.\ \cite{AbateWhitt1995}.

\begin{figure}[ht]
	\centering
	\begin{overpic}[width=0.75\linewidth]{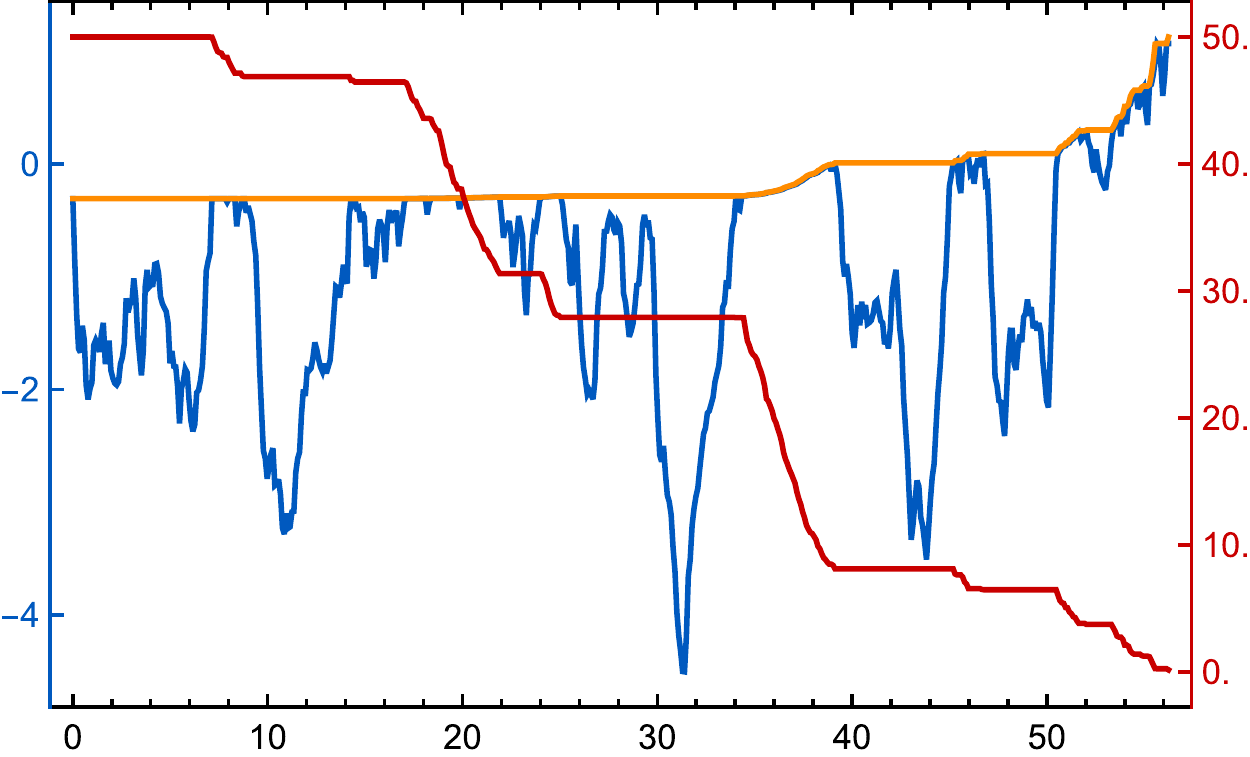}
		\put(95,0.5){$t$}
		\put(1,32){\llap{$Y_t$}}
		\put(100,32){\rlap{$\assetsProcess_t$}}
	\end{overpic}
	\caption{Sample path of impact $Y_t$ (blue), asset position $\assetsProcess_t$ (red, decreasing) and reflecting boundary $\boundary(\assetsProcess_t)$ (orange, increasing) for optimally liquidating $\assetsProcess_0=50$ assets (after an initial block trade $\Delta$), with $\delta=0.1$, $\beta=1$, $\correlationBW=0$, $\impactVolatility=1$ and $f(\cdot)=\exp(\cdot)$.}
	\label{fig:impact-and-optimal-strategy-over-time}
\end{figure}

\begin{remark}\label{rmk:vol of baseS}
The optimizer depends on the volatility of the fundamental price.
	If correlation $\correlationBW$ is not zero, the optimal strategy and the shape of the free boundary  do depend
on the volatility $\sigma$ of the fundamental price process.
This is a notable difference to many additive impact models, 
where the optimal liquidation strategy does not depend on the martingale 
part of the fundamental price process, cf.\ e.g.\ \cite[Sect.\ 2.2]{LorenzSchied13}. 
	To stress the dependence on $\correlationBW$, we write $\Phi^\correlationBW$ for $\Phi$ in \eqref{eq:Phi via H}, denote by $F^\correlationBW$ the right-hand side of \eqref{eq:optimal boundary ode} and by $y^\correlationBW_0$ the root of $f'/f-(\Phi^\correlationBW)'/\Phi^\correlationBW$.
	So the solution $\boundary^\correlationBW$ of the ODE $(\boundary^\correlationBW)'(\theta) = F^\correlationBW\paren[\big]{\boundary^\correlationBW(\theta)}$ with $\boundary^\correlationBW(0) = y^\correlationBW_0$ is the optimal boundary function from \cref{thm:optimal strategy}.
	In the special case of constant $\lambda$, i.e.\ $f(y) = e^{\lambda y}$, we have $F^\correlationBW(y) = F^0(y - \sigma \correlationBW \impactVolatility / \beta)$ since $\Phi^\correlationBW(y) = \Phi^0(y - \sigma \correlationBW \impactVolatility / \beta)$, and thus 
$\boundary^\correlationBW(\theta) = \boundary^0(\theta) + \sigma \correlationBW \impactVolatility / \beta$.
	For general $f$, investigating $y_0$ and $y_\infty$ from \cref{ass:existence of y0 and yinf} still reveals a similar displacement of the boundary.
	Thus, when impact and fundamental price are positively correlated ($\correlationBW > 0$), it is optimal to trade slower if fundamental price volatility is larger, since the wait region increases.
\end{remark}

\begin{remark}
	The optimal liquidation problem with deterministic impact dynamics ($\impactVolatility = 0$) is solved in \cite[Thm.~3.4]{BechererBilarevFrentrup2016-deterministic-liquidation} and characterized by an optimal boundary function $\boundary^0$.
	\Cref{ass:modelpara} implies the model assumptions \cite[Assumption~3.2]{BechererBilarevFrentrup2016-deterministic-liquidation} of that theorem. 
Using the asymptotic expansion \cite[eq.~(10.6.6)]{Lebedev1972special} of Hermite functions, straightforward calculations show uniform convergence $\norm{\boundary^{\impactVolatility} - \boundary^0}_\infty \to 0$ of the boundaries as $\impactVolatility \searrow 0$, for $\boundary^{\impactVolatility}$ solving the ODE \eqref{eq:optimal boundary ode}.
\end{remark}

\section{Reformulation as a calculus of variations problem}\label{sec: boundary}

In this section we will recast the free boundary problem of the variational inequa\-lity~\eqref{eq:Variational} as a (nonstandard, at first) 
calculus of variations problem.
To sketch the idea, suppose that the large trader has to liquidate $\assetsProcess_{0}\geq 0$ shares and that $(Y_{0}, \assetsProcess_{0})$ is already on the free boundary between sell and wait regions (after an initial jump or waiting).
Let $\boundary: [0,\assetsProcess_{0}] \to \RR$ be a $C^1$ function with $\boundary(\assetsProcess_0) = Y_0$ and $\boundary' < 0$
(we expect the optimal boundary to be such).
To find the optimal boundary curve $\boundary$, we will optimize expected proceeds over the set of $\boundary$-reflected strategies $A := \reflectionStrategy{\boundary}{\assetsProcess_0}$ from
\begin{definition}
	Let $(Y, A)$ be the (unique) pair of continuous adapted processes with non-decreasing $A$ such that $Y_t \le \boundary(\assetsProcess_0 - A_t)$ and
	\begin{align*}
		\diff  Y_t &= - \beta Y_t \diff t + \impactVolatility\diff B_t  - \diff A_t \,, & Y_0 &= \boundary(\assetsProcess_0)\,, 
	\\
		\diff A_t &= \indicator_{\braces{  Y_t = \boundary(\assetsProcess_0 - A_t) }} \diff A_t \,, &  A_0 &= 0 \,,
	\end{align*}
	on $\closedStochasticInterval{0, \tau}$ for $\tau := \inf\{t\geq 0 : A_t = \assetsProcess_0 \}$.
	We call $\reflectionStrategy{\boundary}{\assetsProcess_0} := A$ a \emph{$\boundary$-reflected strategy}.
\end{definition}
\begin{remark} \label{rmk: existence+uniqueness of strong RSDE solution}
	Existence and uniqueness of a strong solution $(Y, A)$ follows from (a careful extension of) classical results, cf.~\cite{DupuisIshii93}, by considering the pair $(Y,A)$ as a (degenerate) diffusion in $\RR^2$ with oblique direction of reflection $(-1, +1)$ at a smooth boundary.
	Considered as a one-dimensional diffusion, the process $Y$ is reflected at a boundary that moves with its local time $A$. 
	In this sense, we call the reflection \emph{elastic}.
\end{remark}

\noindent
Viewing $Y$ as a diffusion with reflection at $\boundary$, we can rewrite expected proceeds from $A$ as a deterministic functional of $\boundary$, see \eqref{eq: expected proceeds with y and Phi} below, whose maximizer should describe the optimal strategy.
For this step we rely crucially on a representation for the Laplace transform of the inverse local time of reflected diffusions from \cref{lemma: Laplace transform of inv loc time}.
Since the integrand of \eqref{eq: expected proceeds with y and Phi} depends on the whole path $\boundary$, a reparametrization is necessary to obtain a tractable calculus of variations problem \eqref{problem: maximize with w}~--~\eqref{problem: maximize with w, subsidiary condition}.

Let $\tau_{\assetsProcess_{0}}$ be the stopping time when $A = \assetsProcess_{0}$.
For the continuous $\boundary$-reflected strategy $A$ with proceeds $L := L(\boundary(\assetsProcess_0);A)$, we have by \cite[Theorem~57]{DellacherieMeyer82bookB} for any $T \in [0,\infty)$,
\begin{align*}
	\EE[ \proceedsProcess_T ] 
		&= \EE\brackets[\Big]{ \int_0^{\tau_{\assetsProcess_{0}} \wedge T} f(Y_t) e^{-\delta t} \scE(\sigma W)_t \diff A_t }
\\		&
		= \EE\brackets[\Big]{ \scE(\sigma W)_T \int_0^{\tau_{\assetsProcess_{0}} \wedge T} f(Y_t) e^{-\delta t} \diff A_t }.
\end{align*}
For fixed $T$, let $\QQ$ be the measure given by $\!\diff \QQ / \!\diff \PP = \scE(\sigma W)_T$ on $\scF_T$. Then
\begin{equation} \label{eq: bdry proceeds other measure}
	\EE[\proceedsProcess_T] = \EE^{ \QQ}\brackets[\Big]{ \int_0^{\tau_{\assetsProcess_{0}} \wedge T} f(Y_t) e^{-\delta t} \diff A_t }.
\end{equation}
Girsanov's theorem gives that the process $\widetilde{B}_t := B_t - [B, \sigma W]_t = B_t - \sigma \correlationBW t$ is a Brownian motion under $\QQ$.
Therefore, we have under $\QQ$
\begin{equation*}
	\diff Y_t = (\sigma \correlationBW \impactVolatility - \beta Y_t) \diff t + \impactVolatility\diff \widetilde{B}_t - \diff A_t \,,
\end{equation*}
i.e.~the impact process $Y$ is a (reflected) Ornstein-Uhlenbeck process with shifted (non-zero) mean reversion level, and $A$ is its local time on the boundary.
We cannot directly pass to the limit  $T\rightarrow \infty$ in \eqref{eq: bdry proceeds other measure} because the measure change $\QQ$ depends on $T$.
However, note that the right-hand side of \eqref{eq: bdry proceeds other measure} depends only on the law of the reflected diffusion $(Y, A)$ under the measure $\QQ$.
That is why we consider the reflected diffusion $(X,A^X)$ with the following dynamics under $\PP$: for $g(a):= \boundary(\assetsProcess_0 - a)$ let
\begin{align} 
	\label{def: shifted reflected OU}
	\diff X_t = {}&(\sigma \correlationBW \impactVolatility - \beta X_t) \diff t + \impactVolatility \diff B_t - \diff A^X_t, & X_0 &= g(0) \,,
\\	\label{def: shifted reflected OU local time}
	\diff A^X_t = {}&\indicator_{\{X_t = g(A^X_t)\}} \diff A^X_t, & A^X_0 &= 0 \,,
\\	\label{def: shifted reflected OU inv local time}
	\tau^X_\ell := {}&\inf \{ t > 0 : A^X_t > \ell \text{ or } A^X_t = \assetsProcess_0 \} \,,
\end{align}
such that in addition $X_t \le g(A^X_t)$, on $\closedStochasticInterval{0, \tau^X_{\assetsProcess_0}}$.
Existence and uniqueness of a strong solution $(X, A^X)$ until $\tau^X_{\assetsProcess_0}$ follows as in \cref{rmk: existence+uniqueness of strong RSDE solution}.

Now, by \eqref{eq: bdry proceeds other measure} we have 
\(
	\EE[\proceedsProcess_T] = \EE\brackets{ \int_0^{\tau^X_{\assetsProcess_{0}} \wedge T} f(X_t) e^{-\delta t} \diff A^X_t },
\)
which gives for $T \to \infty$ by monotone convergence on both sides
\begin{align}
	\EE[\proceedsProcess_\infty] &= \EE\brackets[\Big]{ \int_0^{\tau^X_{\assetsProcess_{0}}} f(X_t) e^{-\delta t} \diff A^X_t }\nonumber
		= \EE\brackets[\Big]{ \int_0^{\tau^X_{\assetsProcess_{0}}} f\paren[\big]{g(A^X_t)} e^{-\delta t} \diff A^X_t }
\\		&= \EE\brackets[\Big]{ \int_0^{\assetsProcess_{0}} f\paren[\big]{g(\ell)} e^{-\delta \tau^X_\ell} \diff \ell }
		= \int_0^{\assetsProcess_{0}} f\paren[\big]{g(\ell)} \EE\brackets[\big]{ e^{-\delta \tau^X_\ell} } \diff \ell
		\label{eq: expected proceeds with Laplace transform of inverse local time}
	\,,
\end{align}
using \eqref{def: shifted reflected OU local time}.
To express the latter as a functional of the free boundary only, we need
\begin{theorem} \label{lemma: Laplace transform of inv loc time}
	The Laplace transform of $\tau^X_\ell$ from \eqref{def: shifted reflected OU}--\eqref{def: shifted reflected OU inv local time} for $\assetsProcess_0 = \theta$ is
	\begin{equation} \label{eq: Laplace transform of inv loc time}
		\EE\brackets[\big]{ e^{-\delta \tau^X_\ell} } = \exp\paren[\Big]{\int_{\theta-\ell}^\theta \paren[\big]{\boundary'(x)-1} \frac{\Phi'_\delta(\boundary(x))}{\Phi_\delta(\boundary(x))} \diff x}
		\quad\text{for $\ell < \theta$}.
	\end{equation}
\end{theorem}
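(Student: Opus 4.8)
The plan is to exploit that $\Phi=\Phi_\delta$ is, up to a multiplicative constant, the unique positive increasing solution of $\frac{\impactVolatility^2}{2}\Phi''+(\sigma\correlationBW\impactVolatility-\beta y)\Phi'-\delta\Phi=0$ (i.e.\ $\scL\Phi=0$), as noted after \cref{ass:modelpara}; consequently $e^{-\delta t}\Phi(X_t)$ carries no $\diff t$-drift along the reflected dynamics \eqref{def: shifted reflected OU}, and the only thing one has to handle is the local-time contribution coming from the reflection term $-\diff A^X$.

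First I would apply It\^o's formula to the continuous semimartingale $e^{-\delta t}\Phi(X_t)$ on $\closedStochasticInterval{0,\tau^X_\ell}$. Since $[X]_t=\impactVolatility^2 t$ and $\Phi$ solves the ODE above, all $\diff t$-terms cancel and one is left with $\diff\!\paren[\big]{e^{-\delta t}\Phi(X_t)} = e^{-\delta t}\impactVolatility\Phi'(X_t)\diff B_t - e^{-\delta t}\Phi'(X_t)\diff A^X_t$, the last term stemming from the reflection in \eqref{def: shifted reflected OU}. Next, with $g(a)=\boundary(\assetsProcess_0-a)$ as there, I would introduce the integrating factor $R(a):=\exp\!\paren[\big]{\int_0^a (\Phi'/\Phi)(g(s))\diff s}$, so that $R'/R=(\Phi'/\Phi)\circ g$, and set $N_t:=e^{-\delta t}\Phi(X_t)R(A^X_t)$. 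Because $R(A^X_\cdot)$ is continuous and of finite variation, integration by parts together with $\diff A^X_t=\indicator_{\{X_t=g(A^X_t)\}}\diff A^X_t$ from \eqref{def: shifted reflected OU local time} makes the two $\diff A^X$-contributions to $\diff N_t$ cancel exactly by the choice of $R$ (since $\Phi(g(a))R'(a)=R(a)\Phi'(g(a))$), leaving $\diff N_t = e^{-\delta t}\impactVolatility R(A^X_t)\Phi'(X_t)\diff B_t$; hence $N$ is a local martingale.

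The crucial observation is that $N$ is bounded on $\closedStochasticInterval{0,\tau^X_\ell}$ when $\ell<\assetsProcess_0$: there $A^X_t\le\ell$, so $X_t\le g(A^X_t)\le g(\ell)=\boundary(\assetsProcess_0-\ell)$, whence $0<\Phi(X_t)\le\Phi(\boundary(\assetsProcess_0-\ell))$ as $\Phi$ is positive and increasing; moreover $1\le R(A^X_t)\le R(\ell)<\infty$ and $0<e^{-\delta t}\le1$. Thus the stopped process $N^{\tau^X_\ell}$ is a bounded local martingale, hence a uniformly integrable martingale, and optional stopping at $\tau^X_\ell\wedge t$ with $t\to\infty$ applies: on $\{\tau^X_\ell=\infty\}$ the factor $e^{-\delta t}$ forces $N_t\to0$, while on $\{\tau^X_\ell<\infty\}$ continuity of $X,A^X,g$ and the support property of $A^X$ give $A^X_{\tau^X_\ell}=\ell$ and $X_{\tau^X_\ell}=g(\ell)$. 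Bounded convergence then yields, with the convention $e^{-\delta\cdot\infty}:=0$,
\[
	\Phi(\boundary(\assetsProcess_0)) = N_0 = \EE\brackets[\big]{\lim\nolimits_t N_{\tau^X_\ell\wedge t}} = \Phi(\boundary(\assetsProcess_0-\ell))\,R(\ell)\,\EE\brackets[\big]{e^{-\delta\tau^X_\ell}}.
\]
Solving for the Laplace transform and rewriting $R(\ell)=\exp\!\paren[\big]{\int_{\assetsProcess_0-\ell}^{\assetsProcess_0}(\Phi'/\Phi)(\boundary(x))\diff x}$ via the substitution $x=\assetsProcess_0-s$, together with $\Phi(\boundary(\assetsProcess_0))/\Phi(\boundary(\assetsProcess_0-\ell))=\exp\!\paren[\big]{\int_{\assetsProcess_0-\ell}^{\assetsProcess_0}(\Phi'/\Phi)(\boundary(x))\boundary'(x)\diff x}$ by the fundamental theorem of calculus, gives exactly \eqref{eq: Laplace transform of inv loc time} (recalling $\assetsProcess_0=\theta$).

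I expect the only genuinely delicate points to be upgrading $N^{\tau^X_\ell}$ from a local to a true martingale — which the boundedness above settles — and the identity $X_{\tau^X_\ell}=g(\ell)$ on $\{\tau^X_\ell<\infty\}$, which rests on the standard fact that $A^X$ increases only while $X$ sits on the moving boundary and strictly increases in every right-neighbourhood of $\tau^X_\ell$; the remaining steps are a routine It\^o computation and changes of variables.
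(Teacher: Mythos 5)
Your proposal is correct, and it takes a genuinely different route from the paper's. The paper identifies the Laplace transform indirectly: it computes $q(y,\theta)=\EE[\int_0^T e^{-\delta t}\varphi(X_t)\diff A^X_t]$ for arbitrary continuous test functions $\varphi$ via the ansatz $q(y,\theta)=\Phi(y)C(\theta)$ (so that a suitable process $M$ is a martingale), solves the resulting first-order ODE for $H(\theta)=q(\boundary(\theta),\theta)$, and then compares with $\int_0^\theta\varphi(\boundary(z))\,\EE[e^{-\delta\tau^X_{\theta-z}}]\diff z$ as in \eqref{eq: expected proceeds with Laplace transform of inverse local time}, varying $\varphi$ and using left-continuity of $z\mapsto\EE[e^{-\delta\tau^X_{\theta-z}}]$ together with bijectivity of $\boundary$ to deduce the pointwise identity. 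You instead construct the multiplicative martingale $N_t=e^{-\delta t}\Phi(X_t)\exp\bigl(\int_0^{A^X_t}(\Phi'/\Phi)(g(s))\diff s\bigr)$, in which the $\diff A^X$-contributions cancel exactly on the support of the reflection measure, and read off the formula by optional stopping at $\tau^X_\ell$; the boundedness of the stopped process (from $X_t\le g(A^X_t)\le g(\ell)$ and $\Phi$ positive increasing) justifies the limit $t\to\infty$, the event $\{\tau^X_\ell=\infty\}$ is killed by the factor $e^{-\delta t}$, and the standard support argument gives $A^X_{\tau^X_\ell}=\ell$, $X_{\tau^X_\ell}=g(\ell)$; your final algebra indeed reproduces \eqref{eq: Laplace transform of inv loc time}. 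What your approach buys: it is more direct and self-contained (no test-function identification step, no need to verify the $C^{2,1}\cap C^{1,1}$ regularity of the candidate $q$, and no use of strict monotonicity of $\boundary$ beyond $g$ being continuous and non-decreasing), and it extends verbatim to general regular diffusions reflected at elastic boundaries, which the paper only records as a remark after the theorem. What the paper's route buys: its intermediate computation with general $\varphi$ is essentially the representation \eqref{eq: expected proceeds with Laplace transform of inverse local time}--\eqref{eq: expected proceeds with y and Phi} of expected proceeds that the subsequent calculus-of-variations argument needs anyway, so the effort is recycled there; as a proof of the theorem itself, your martingale-plus-optional-stopping argument is the cleaner one.
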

\begin{proof}
\newcommand{\generalValue}{q}
	We will identify the Laplace transform by calculating the terms in \eqref{eq: expected proceeds with Laplace transform of inverse local time} at first for $f$ being replaced with arbitrary test functions $\varphi$, and then using ideas from calculus of variations.
	To identify
	$\generalValue(y,\theta) := \EE[\int_0^T e^{-\delta t} \varphi(X_t) \diff A^X_t]$ for continuous functions $\varphi: \RR \to [0,\infty)$ with $X_0=y \le \boundary(\theta)$, $\assetsProcess_0 = \theta$ and $T := \tau^X_\theta$, it suffices to construct $\generalValue$ such that
	\[
		M_t:= \int_0^t e^{-\delta u} \varphi(X_u) \diff A^X_u + e^{-\delta t} \generalValue\paren[\big]{ X_t, \theta - A^X_t }
	\]
	is a martingale on $\closedStochasticInterval{0,T}$ 
	with $e^{-\delta t} \generalValue(X_t, \theta - A^X_t) \to 0$ in $L^1$ as $t \to T$.
	Consider the state space $\scI:= \{ (y,\theta) : y < \boundary(\theta) \}$.
	To check the martingale property, assuming that we have $\generalValue \in C^{2,1}(\scI) \cap C^{1,1}(\overline \scI)$, It\^o's formula yields (similarly to \eqref{eq:Ito for G}) that $\generalValue_y + \generalValue_\theta = \varphi$ on $\partial \scI$ and $\scL \generalValue(y,\theta) = 0$ in $\scI$.
	Moreover, for $\generalValue$ increasing in $y$ we have $\generalValue(y,\theta) = \Phi(y) C(\theta)$ with $\Phi = \Phi_\delta$ from \eqref{eq:Phi via H} and some function $C \in C^1$.
	Let $H(\theta):= \generalValue(\boundary(\theta), \theta)$.
	The condition $q_y + q_\theta = \varphi$ leads to
	\begin{align*}
		H'(\theta)
			&= \Phi'(\boundary(\theta)) C(\theta) \boundary'(\theta) + \paren[\big]{\varphi(\boundary(\theta)) - \Phi'(\boundary(\theta)) C(\theta)}
			= A(\theta) H(\theta) + B(\theta) 
	\end{align*}
	where $A(\theta):= \paren[\big]{ \boundary'(\theta) - 1 } \Phi'(\boundary(\theta)) / \Phi(\boundary(\theta))$ and $B(\theta):= \varphi(\boundary(\theta))$.
	Solving this ODE for $H$ gives (since $H(0)=0$)
	\[
		H(\theta) 
			= \int_0^\theta \!\! \varphi\paren[\big]{ \boundary(z) } \exp\paren[\Big]{\int_{z}^\theta \paren[\big]{ \boundary'(x)-1 } \frac{\Phi'_\delta(\boundary(x))}{\Phi_\delta(\boundary(x))} \diff x} \diff z
			,
	\]
	which yields the candidate \(\generalValue(y,\theta) = \Phi(y) H(\theta) / \Phi(\boundary(\theta))\).
	It is straightforward to check $\generalValue \in C^{2,1}(\scI) \cap C^{1,1}(\overline \scI)$ and $\generalValue_y + \generalValue_\theta = \varphi$ on $\partial \scI$, giving that $M$ is a martingale, using boundedness of $\generalValue_y(X, \theta - A^X)$ on $\closedStochasticInterval{0,T}$.
	By monotonicity of $\generalValue$ in $y$, hence $\generalValue(y,\theta) \leq H(\theta)$, we obtain $e^{-\delta t} \generalValue(X_t, \theta - A^X_t) \to 0$ in $L^1$ as $t \to T$ via dominated convergence, so as in \eqref{eq: expected proceeds with Laplace transform of inverse local time} we find
	\begin{equation} \label{eq:laplace transf difference}
		\int_0^\theta \varphi(\boundary(z)) \paren[\bigg]{ \underbrace{ \EE[e^{-\delta \tau^X_{\theta-z}}] - \exp\paren[\Big]{\int_{z}^\theta \paren[\big]{ \boundary'(x) - 1 } \frac{\Phi'_\delta(\boundary(x))}{\Phi_\delta(\boundary(x))} \diff x}}_{=: \Delta(z)}} \diff z = 0.
	\end{equation}
	Note that $z \mapsto \EE[\exp\paren{-\delta \tau^X_{\theta-z}}]$ is left-continuous.
	Hence, if $\Delta(z_1) > 0$ for some $z_1 \in (0,\theta]$, there exists $z_0 < z_1$ such that $\Delta > 0$ on $(z_0, z_1)$.
	Since $\boundary$ is bijective (recall that $\boundary' < 0$), we can find a continuous function $\varphi$ with $\varphi \circ \boundary > 0$ inside $(z_0,z_1)$ and $\varphi \circ \boundary = 0$ outside $(z_0,z_1)$, which yields $\int_0^\theta \varphi(\boundary(z)) \Delta(z) \diff z > 0$, contradicting \eqref{eq:laplace transf difference}.
	Similarly, $\Delta(z_1) < 0$ also leads to a contradiction.
	Therefore $\Delta = 0$ on $(0,\theta]$.
\end{proof}
\begin{remark}
Let us note that \cref{lemma: Laplace transform of inv loc time} generalizes to general (regular) diffusions reflected at increasing boundaries by taking $\Phi_\delta$ to be the increasing non-negative $\delta$-eigenfunction of the generator of the diffusion. Indeed, the proof would not change. 
\end{remark}
Using \cref{lemma: Laplace transform of inv loc time} and  \eqref{eq: expected proceeds with Laplace transform of inverse local time} we derive the following representation for the proceeds from a $\boundary$-reflected strategy in terms of the boundary:
\begin{equation} \label{eq: expected proceeds with y and Phi}
	\EE[\proceedsProcess_\infty] = \int_0^{\assetsProcess_{0}} f\paren[\big]{ g(\ell) } \exp\paren[\bigg]{ -\int_0^\ell \paren[\big]{ g'(a) + 1 } \frac{\Phi'_\delta(g(a))}{\Phi_\delta(g(a))} \diff a} \diff \ell \,.
\end{equation}
Since the $\!\diff\ell$-integrand in \eqref{eq: expected proceeds with y and Phi} depends on the whole path of $g$, classical calculus of variations methods are not directly available.
Since by definition $g(a) = \boundary(\assetsProcess_0 - a)$ we get with $\transformThetaToR(\ell) :=\int_0^\ell \paren{ 1 - \boundary'(x) } \frac{\Phi'(\boundary(x))}{\Phi(\boundary(x))} \diff x$ that
\begin{equation} \label{eq: expected proceeds with y and rho}
	\EE[ \proceedsProcess_\infty ] = e^{-\transformThetaToR(\assetsProcess_{0})} \int_0^{\assetsProcess_{0}} f\paren[\big]{ \boundary(\ell) } e^{\transformThetaToR(\ell)} \diff \ell.
\end{equation}
Since $\Phi',\Phi>0$ and $\boundary' < 0$, the function $\transformThetaToR$ in strictly increasing and thus has an inverse $\transformThetaToR^{-1}$. 
Fixing $R:= \transformThetaToR(\assetsProcess_{0})$ and setting $w(r):= \boundary(\transformThetaToR^{-1}(r))$, we find
\begin{equation*}
	\transformThetaToR^{-1}(r) = \int_0^r \paren[\bigg]{ w'(z) + \frac{\Phi(w(z))}{\Phi'(w(z))} } \diff z.
\end{equation*}
Hence, by the reparametrization $\boundary(\theta) = w(\transformThetaToR(\theta))$, finding a maximizing function $\boundary$ for \eqref{eq: expected proceeds with y and rho} reduces to the problem of finding a function $w$ which maximizes
\begin{align}\label{problem: maximize with w}
	J(w):= \int_0^R f\paren[\big]{w(r)} e^{-(R-r)} \squeeze[1]{ \paren[\bigg]{ w'(r) + \frac{\Phi(w(r))}{\Phi'(w(r))} } \diff r } \quad (= \EE[L_\infty])
\\
\label{problem: maximize with w, subsidiary condition}
\text{subject to the condition }
	K(w):= \int_0^R \squeeze[1]{ \paren[\bigg]{ w'(r) + \frac{\Phi(w(r))}{\Phi'(w(r))} } \diff r = \assetsProcess_{0} }\,.
\end{align}

\section{Solving the calculus of variations problem}
\label{sec:free boundary}

In this section, we solve (locally) the calculus of variations problem of maximizing~\eqref{problem: maximize with w} subject to~\eqref{problem: maximize with w, subsidiary condition} by employing necessary and sufficient conditions on the first and second variation of the functionals involved. 
We obtain the candidate free boundary function $\boundary(\theta)$, see \cref{def: M1 and M2,eq:boundary}, and show its local optimality in \cref{lemma: strict local maximizer w variational problem}. 
We then relate our results on the calculus of variations problem to the initial optimal execution problem in \cref{thm: one-sided optimal y}. This will be crucial later for \cref{sec:value fct} to verify the desired inequality in the sell region, presented in \cref{lemma:inequality in sell region}.

A maximizer $w$ of the isoperimetric problem \eqref{problem: maximize with w} -- \eqref{problem: maximize with w, subsidiary condition} also maximizes $J + \lagrangeMultiplier K$ for some constant $\lagrangeMultiplier := \lagrangeMultiplier(R)$ that is the Lagrange multiplier, cf.~\cite[Theorem~2.12.1]{GelfandFomin00}.
Considering perturbations $w(r) + h(r)$ of $w$ with $h(0) = h(R) = 0$, a necessary condition for an extremum $w$ of a functional $J + \lagrangeMultiplier K$ is that its first variation $\functionalVariation(J+\lagrangeMultiplier K)$ vanishes at $w$, see \cite[Thm.~1.3.2]{GelfandFomin00}.
Integration by parts yields the Euler-Lagrange equation
\begin{equation}\label{eq: Euler-Lagrange}
	0 = F_w - \frac{\diff}{\diff r}F_{w'} + \paren[\Big]{ G_w - \frac{\diff}{\diff r}G_{w'} } \lagrangeMultiplier
	\,,
\end{equation}
with $G(r,w,w') \!:=\! w' + \Phi(w) / \Phi'(w)$ and $F(r,w,w') \!:=\! f(w)e^{-(R-r)} G(r,w,w')$, the integrands of $K$ and $J$, respectively.

Since we assumed to start on the (yet unknown) boundary, one side is fixed, $w(R)=\boundary(\assetsProcess_{0})$.
But the other end $w(0)$ is free.
Thus, integration by parts of $\functionalVariation(J+\lagrangeMultiplier K)$ with perturbations $w(r) + h(r)$ of $w$ where $h(0) \ne 0$ imposes as an additional condition for $\functionalVariation(J+\lagrangeMultiplier K)$ to vanish that
\begin{equation*}
	0 = \paren[\big]{ F_{w'} + \lagrangeMultiplier G_{w'} } \bigr|_{r=0} \,.
\end{equation*}
This \emph{natural boundary condition} (cf.\ \cite[Sect.\ 1.6]{GelfandFomin00}) yields the Lagrange multiplier $\lagrangeMultiplier(R) = -f(y_0) e^{-R}$ for $y_0:= \boundary(0) = w(0)$.
After multiplication with $e^R \Phi'(w)^2$, equation~\eqref{eq: Euler-Lagrange} simplifies to
\begin{equation}\label{eq: simplified Euler Lagrange}
	e^r \Phi(w) \paren[\big]{f'(w)\Phi'(w) - f(w) \Phi''(w) } = f(y_0) \paren[\big]{\Phi'(w)^2 - \Phi(w)\Phi''(w)}\,.
\end{equation}
Inserting $r=0$ gives a condition for $y_0$, namely
\begin{equation*}
	f'(y_0) \Phi(y_0) = f(y_0) \Phi'(y_0).
\end{equation*}
\Cref{ass:existence of y0 and yinf} guarantees existence and \labelcref{ass:uniqueness of y0} uniqueness of $y_0$.
On the other hand, differentiating both sides of \eqref{eq: simplified Euler Lagrange} with respect to $r$ gives the ODE for $w$
\begin{equation}\label{ode: w with e^r}
\begin{split}
	0 &= \paren[\big]{ e^r (f'\Phi' - f\Phi'')\Phi' + e^r ( f''\Phi' - f\Phi''' )\Phi - f(y_0) (\Phi'\Phi'' - \Phi\Phi''')} w'
\\		&\qquad + e^r (f'\Phi' - f\Phi'') \Phi,
\end{split}
\end{equation}
where $f= f\paren{ w(r) }$, $f' = f'\paren{ w(r) }$, $\Phi= \Phi\paren{ w(r) }$, etc.

Both sides in the above equality~\eqref{eq: simplified Euler Lagrange} are negative on the boundary $w(r)$, due to 
\begin{lemma}\label{lemma: Turan like inequality for Phi}
	The positive, increasing eigenfunctions $\Phi = \Phi_\delta$ corresponding to the eigenvalue $\delta>0$ of the generator of an Ornstein-Uhlenbeck process satisfy 
	\[
		\paren[\big]{ \Phi^{(n)}(x) }^2 < \Phi^{(n-1)}(x)\Phi^{(n+1)}(x)
	\]
	for all $x\in\RR$ and $n \in \NN$. In particular, $(\Phi')^2 < \Phi\Phi''$. Moreover, for $n\in \NN$
	\[
		\lim_{x\rightarrow -\infty}\Phi^{(n)}(x)/\Phi^{(n-1)}(x) = 0\quad \text{and}\quad \lim_{x\rightarrow +\infty}\Phi^{(n)}(x)/\Phi^{(n-1)}(x) = +\infty.
	\]
\end{lemma}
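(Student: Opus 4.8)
\emph{Proof plan.} The natural starting point is the classical integral representation of the Hermite function (cf.\ \cite[Sect.~10.2]{Lebedev1972special}): for any $\nu$ with $\nu<0$ one has $H_\nu(z)=\frac{1}{\Gamma(-\nu)}\int_0^\infty e^{-s^2-2sz}s^{-\nu-1}\diff s$. Applying this with $\nu=-\delta/\beta<0$ and $z=z(x):=(\sigma\correlationBW\impactVolatility-\beta x)/(\sqrt\beta\impactVolatility)$, which is an affine, strictly decreasing function of $x$ with constant slope $z'(x)=-\sqrt\beta/\impactVolatility$, gives
\[
	\Phi(x)=\Phi_\delta(x)=\frac{1}{\Gamma(\delta/\beta)}\int_0^\infty e^{-s^2-2sz(x)}\,s^{\delta/\beta-1}\diff s.
\]
Differentiating $n$ times under the integral sign (each differentiation only brings down the factor $-2sz'(x)=\kappa s$ with the constant $\kappa:=2\sqrt\beta/\impactVolatility>0$, which is polynomial in $s$ and hence dominated by $e^{-s^2}$, locally uniformly in $x$) yields
\[
	\Phi^{(n)}(x)=\frac{\kappa^n}{\Gamma(\delta/\beta)}\int_0^\infty s^n\,\diff\mu_x(s),\qquad \diff\mu_x(s):=s^{\delta/\beta-1}e^{-s^2-2sz(x)}\diff s .
\]
In particular every $\Phi^{(n)}$ is strictly positive, which in passing re-proves that $\Phi$ is positive and increasing.

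For the Tur\'an-type inequality, fix $x$ and abbreviate $m_n:=\int_0^\infty s^n\,\diff\mu_x(s)\in(0,\infty)$, so that $\Phi^{(n)}(x)=\kappa^n\Gamma(\delta/\beta)^{-1}m_n$. By the Cauchy--Schwarz inequality in $L^2(\mu_x)$,
\[
	m_n^2=\Bigl(\int_0^\infty s^{(n-1)/2}\,s^{(n+1)/2}\,\diff\mu_x\Bigr)^{\!2}\le m_{n-1}\,m_{n+1},
\]
and the inequality is \emph{strict} because $s\mapsto s^{(n-1)/2}$ and $s\mapsto s^{(n+1)/2}$ are not $\mu_x$-a.e.\ proportional (the density of $\mu_x$ is strictly positive on all of $(0,\infty)$, so $\mu_x$ is not a Dirac mass). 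Multiplying through by the common prefactor $\kappa^{2n}\Gamma(\delta/\beta)^{-2}=\bigl(\kappa^{n-1}\Gamma(\delta/\beta)^{-1}\bigr)\bigl(\kappa^{n+1}\Gamma(\delta/\beta)^{-1}\bigr)$ gives $\bigl(\Phi^{(n)}(x)\bigr)^2<\Phi^{(n-1)}(x)\,\Phi^{(n+1)}(x)$; the case $n=1$ is the asserted $(\Phi')^2<\Phi\Phi''$.

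For the boundary behaviour, observe that $\Phi^{(n)}(x)/\Phi^{(n-1)}(x)=\kappa\,m_n/m_{n-1}$ is, up to the positive constant $\kappa$, a ratio of consecutive moments of $\mu_x$. As $x\to-\infty$ we have $z(x)\to+\infty$, and the weight $e^{-2sz(x)}$ concentrates $\mu_x$ near $s=0$; Watson's lemma (with large parameter $2z(x)$ and weight $s^{a-1}e^{-s^2}\sim s^{a-1}$ at the origin) gives $m_n\sim\Gamma(n+\delta/\beta)\,(2z(x))^{-n-\delta/\beta}$, whence the ratio tends to $0$. As $x\to+\infty$ we have $z(x)\to-\infty$; writing $c:=-z(x)\to+\infty$ and completing the square, $m_n=e^{c^2}\int_{-c}^\infty(t+c)^{n+\delta/\beta-1}e^{-t^2}\diff t\sim\sqrt\pi\,e^{c^2}c^{\,n+\delta/\beta-1}$ by Laplace's method, so $m_n/m_{n-1}\sim c=-z(x)\to+\infty$. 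Hence $\Phi^{(n)}/\Phi^{(n-1)}\to0$ and $\to+\infty$ respectively. (Alternatively, differentiating the eigenequation shows $\scL\Phi^{(n)}=(\delta+n\beta)\Phi^{(n)}$, so by $H'_\nu=2\nu H_{\nu-1}$ the function $\Phi^{(n)}$ is a positive multiple of $H_{-\delta/\beta-n}(z(\cdot))$, and the two limits reduce to the known asymptotics \cite[eq.~(10.6.6)]{Lebedev1972special} of the ratio $H_a(z)/H_{a+1}(z)$ as $z\to\pm\infty$.)

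The only genuinely non-mechanical points are the two limit computations — specifically, the uniformity needed to run Watson's lemma, respectively Laplace's method, for the one-parameter family of integrals with a moving exponent; both are routine. By contrast the Tur\'an inequality itself is immediate once $\Phi^{(n)}$ has been exhibited as a positive constant times the $n$-th moment of the measure $\mu_x$.
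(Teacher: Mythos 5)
Your proof is correct and follows essentially the same route as the paper: the integral representation of the Hermite function turns the derivatives of $\Phi$ into moments of a positive measure on $(0,\infty)$, and strict Cauchy–Schwarz (which the paper phrases probabilistically as $\tilde\EE[(X-Y)^2]>0$ for independent copies $X,Y$) yields the Turán inequality, while the limits come from the asymptotics of the Hermite functions. The only minor differences are that you handle all $n$ at once instead of first reducing to $n=1$ via $H_\nu'=2\nu H_{\nu-1}$, and you derive the boundary limits by Watson's lemma and Laplace's method rather than citing the asymptotic formulas \cite[eqs.~(10.6.4), (10.6.7)]{Lebedev1972special} as the paper does — both variations are harmless.
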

\begin{proof}
	Since $H_\nu'(x) = 2\nu H_{\nu-1}(x)$ for complex $\nu$ (see~\cite[eq.~(10.4.4)]{Lebedev1972special}), \cref{eq:Phi via H} implies
	\[
		\Phi^{(n)}_\delta \Phi^{(n+2)}_\delta - \paren[\big]{ \Phi^{(n+1)}_\delta }^2
		= \paren[\big]{ \Phi_{\delta + n\beta} \Phi''_{\delta + n\beta} - (\Phi'_{\delta + n\beta})^2 } \frac{2^{2n}}{\impactVolatility^{2n}\beta^n} \prod_{k=0}^n (\delta + k\beta)^2 \,,
	\]
	so it suffices to prove $(\Phi')^2 < \Phi''\Phi$ for every $\delta,\beta,\sigma,\impactVolatility > 0$ and $\correlationBW \in [-1,1]$ in \eqref{eq:Phi via H}.
	This is equivalent to showing $(H_\nu')^2 < H_\nu''H_\nu$ for every $\nu < 0$.
	Since $\Gamma(-\nu) > 0$ and $H_{\nu}(x) = \Gamma(-\nu)^{-1}\int_0^\infty e^{-t^2 - 2xt} t^{-\nu-1} \diff t$ for $\nu<0$ (cf.~\cite[eq.~(10.5.2)]{Lebedev1972special}), the function $\varphi_x(t):= e^{-t^2 - 2xt} t^{-\nu-1}$ is the density of an absolutely continuous finite measure $\mu$ on $[0,\infty)$.
	For the probability measure $\tilde\PP[A] := \mu([0,\infty))^{-1} \mu(A)$ consider two independent random variables $X,Y \sim \tilde\PP$.
	By \cite[Thm.~6.28]{Klenke2008probability}, we can exchange differentiation and integration (in the integral representation of $H_\nu$ above) to see that 
	\(
		H_\nu''(x)H_\nu(x) - H_\nu'(x)^2 = 4\, \tilde\EE[ X^2 - X Y ].
	\)
	Symmetry gives $2\, \tilde\EE[ X^2 - X Y ] = \tilde\EE[(X-Y)^2] \ge 0$.
	Since $X$ and $Y$ are independent with absolutely continuous distribution, Fubini's theorem yields $\tilde\PP[X=Y] = 0$, so $\tilde\EE[(X-Y)^2] > 0$.
	
	The asymptotic behavior of $\Phi^{(n)}/\Phi^{(n-1)}$ follows from \cite[eq.~(10.6.4)]{Lebedev1972special} in the case $x\rightarrow -\infty$ and from \cite[eq.~(10.6.7)]{Lebedev1972special} in the case $x\rightarrow +\infty$.
\end{proof}

\noindent
Now \eqref{eq: simplified Euler Lagrange} gives a representation of $r$ given $y_0$ and $w$ as
\begin{equation}\label{eq: r via optimal w}
	r = \log \frac{f(y_0)}{\Phi(w)} + \log \frac{\Phi'(w)^2 - \Phi(w)\Phi''(w)}{f'(w)\Phi'(w)-f(w)\Phi''(w)},
\end{equation}
which we can use to simplify the ODE~\eqref{ode: w with e^r} (assuming $w'\ne 0$ everywhere) to
\begin{equation*}
	\frac{1}{w'} = -\frac{\Phi'}{\Phi} + \frac{f\Phi''' - f''\Phi'}{f'\Phi'-f\Phi''} + \frac{\Phi'\Phi'' - \Phi\Phi'''}{(\Phi')^2-\Phi\Phi''},
\end{equation*}
reading the right hand side as a function of $w(r)$. With $\boundary(\theta) = w(\transformThetaToR(\theta))$ and $r:= \transformThetaToR(\theta)$, we get $\boundary'(\theta) = w'(r) \transformThetaToR'(\theta) = w'(r) (1 - \boundary'(\theta)) \Phi'(\boundary(\theta)) / \Phi(\boundary(\theta))$, which simplifies to
\begin{align}
	\boundary'(\theta) 
		&= \frac{\Phi'(\boundary)}{\Phi'(\boundary) + \Phi(\boundary)/w'(r)}
\notag\\
		&= \frac{1}{\Phi} \frac{ \paren{ (\Phi')^2 - \Phi\Phi''} (f'\Phi' - f\Phi'') }{ (\Phi\Phi'' - (\Phi')^2) f'' + (\Phi'\Phi''  - \Phi\Phi''') f' + (\Phi'\Phi''' - (\Phi'')^2 ) f}
\notag\\
		&= \frac{M_2(\boundary(\theta))}{M_1'(\boundary(\theta))}, \label{eq: y' with M1 and M2}
\end{align}
\begin{equation} \label{def: M1 and M2}
	\text{where }\quad  M_1 := \frac{f \Phi' - f'\Phi}{(\Phi')^2 - \Phi\Phi''} \qquad\text{and}\qquad M_2 := \frac{f'\Phi' - f \Phi''}{(\Phi')^2 - \Phi\Phi''}.
\end{equation}
By \eqref{eq: simplified Euler Lagrange} and \cref{lemma: Turan like inequality for Phi} we have $M_2(\boundary(\theta)) > 0$ for any $\theta$.
We get $M_1'(\boundary(\theta)) < 0$ by
\begin{lemma}\label{lemma: M1 monotonicity}
	Under  \cref{ass:uniqueness of y0},
	 $M_1'(y) < 0$ for all $y \in \RR$.
\end{lemma}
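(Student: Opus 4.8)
The plan is to reduce the claim to a single pointwise inequality for the logarithmic derivatives $\lambda:=f'/f$ and $\psi:=\Phi'/\Phi$ of $f$ and of $\Phi=\Phi_\delta$, and then to close that inequality with the help of \cref{lemma: Turan like inequality for Phi} and \cref{ass:uniqueness of y0}. Dividing numerator and denominator of $M_1$ by $\Phi^2$ gives
\[
	M_1 \;=\; \frac{f\Phi'-f'\Phi}{(\Phi')^2-\Phi\Phi''}\;=\;\frac{(f/\Phi)'}{(\Phi'/\Phi)'}\;=\;\frac{\phi'}{\psi'}\,,\qquad \phi:=f/\Phi\,,
\]
and \cref{lemma: Turan like inequality for Phi} yields $\psi'=(\Phi\Phi''-(\Phi')^2)/\Phi^2>0$ everywhere, so $\psi$ is a strictly increasing $C^2$-bijection of $\RR$ onto $(0,\infty)$. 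Differentiating once more,
\[
	M_1'\;=\;\frac{\phi''\psi'-\phi'\psi''}{(\psi')^2}\,,
\]
so, $\psi'$ being positive, proving $M_1'<0$ amounts to proving $\phi''\psi'-\phi'\psi''<0$ at every point; equivalently, $f/\Phi$ is a strictly concave function of $\psi=\Phi'/\Phi$.

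Put $u:=\lambda-\psi=(\log\phi)'$, so that $\phi'=\phi\,u$ and $\phi''=\phi\,(u^2+u')$. Since $\phi>0$, the inequality to be shown becomes
\[
	(u^2+u')\,\psi'\;-\;u\,\psi''\;<\;0\,.
\]
Here \cref{ass:uniqueness of y0} enters precisely as the statement $u'=(f'/f)'-(\Phi'/\Phi)'<0$, i.e.\ $\log(f/\Phi)$ is strictly concave; this supplies the negative term $\psi'u'<0$ and also shows $u$ is strictly decreasing with single zero $y_0$ (the point of \cref{ass:existence of y0 and yinf}), so that $\sgn u=\sgn(y_0-y)$. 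To handle $\psi''$ I would differentiate the Riccati form $\tfrac{\impactVolatility^2}{2}(\psi'+\psi^2)+(\sigma\correlationBW\impactVolatility-\beta y)\psi-\delta=0$ of the ODE $\scL\Phi=0$, solve for $\psi''$ in terms of $\psi,\psi'$ and the coefficient $\sigma\correlationBW\impactVolatility-\beta y$, and then eliminate that coefficient using the Riccati equation once more. A lengthy but entirely elementary rearrangement then turns the displayed inequality into
\[
	\psi'\,\frac{\scL f}{f}\;<\;\beta\,(\lambda-\psi)\,\psi\,,
\]
that is, $\psi'(k+\beta)<\beta\,u\,\psi$ with $k$ as in \cref{ass:decreasing k}. (Equivalently, one may differentiate $M_1=N/D$ directly with $N=f\Phi'-f'\Phi$, $D=(\Phi')^2-\Phi\Phi''$; using $\scL\Phi=0$ and $\scL\Phi'=\beta\Phi'$ to compute $N',D'$ one obtains $\tfrac{\impactVolatility^2}{2}M_1'D^2=\Phi\,(\beta N\Phi'-D\,\scL f)$, from which the same scalar inequality drops out after substituting $N=-f\Phi u$, $D=-\Phi^2\psi'$ and $\scL f=f(k+\beta)$.)

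The final — and hardest — step is to verify this scalar inequality. One substitutes $f''/f=\lambda^2+\lambda'$ and uses $\lambda'<\psi'$ from \cref{ass:uniqueness of y0} to control $\scL f/f$, brings in the monotonicity and sign data for $\psi,\psi'$ and for $\Phi''/\Phi'$ furnished by \cref{lemma: Turan like inequality for Phi} (in particular $\psi'>0$ and $(\Phi'')^2<\Phi'\Phi'''$, i.e.\ $\Phi''/\Phi'$ is increasing), together with the monotonicity of $k$ from \cref{ass:decreasing k}, and splits the estimate according to whether $y<y_0$ or $y>y_0$ (the sign of $u=\lambda-\psi$). I expect this closing estimate, rather than the reduction, to be the principal obstacle, since the inequality is not a formal consequence of $\scL\Phi=0$ alone: the content lies in balancing the concavity of $\log(f/\Phi)$ against the convexity of $\log\Phi$ and of $\log\Phi'$ coming from the Turán-type inequalities.
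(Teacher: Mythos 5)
Your reduction is sound as far as it goes: the rewriting $M_1=(f/\Phi)'\big/(\Phi'/\Phi)'$, the identity $\tfrac{\impactVolatility^2}{2}M_1'D^2=\Phi\,(\beta N\Phi'-D\,\scL f)$ with $N=f\Phi'-f'\Phi$, $D=(\Phi')^2-\Phi\Phi''$, and hence the equivalence of $M_1'(y)<0$ with the scalar inequality $\psi'(y)\,\bigl(k(y)+\beta\bigr)<\beta\,\bigl(\lambda(y)-\psi(y)\bigr)\,\psi(y)$, $\psi:=\Phi'/\Phi$, all check out. But that equivalence is just a restatement of the lemma, and the step carrying its entire content --- actually proving the scalar inequality --- is missing: you explicitly defer it as ``the principal obstacle'' and only list ingredients ($\lambda'<\psi'$, the sign of $u=\lambda-\psi$ relative to $y_0$, monotonicity of $k$) without an argument. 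This is a genuine gap, not a routine omission. Your sketched plan also invokes \cref{ass:decreasing k} and \cref{ass:existence of y0 and yinf}, which are not hypotheses of the lemma, and the tools you cite do not control the two sides uniformly in $y$: already for the benchmark $f(y)=e^{\lambda y}$ (constant $\lambda$, so $\lambda'\equiv 0$ and \cref{ass:uniqueness of y0} holds trivially), one has $\psi(y)\sim 2\beta y/\impactVolatility^2$ as $y\to+\infty$, so the left-hand side $\psi'(k+\beta)$ is only linearly negative in $y$ while the right-hand side $\beta(\lambda-\psi)\psi$ is quadratically negative; no combination of the sign and monotonicity facts you list (nor a case split on $\sgn u$) yields the required comparison in that regime, and your sketch offers no argument there.

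The paper's proof closes the estimate by an entirely different, self-contained algebraic device that your proposal does not contain: with $G:=\Phi'/\Phi$, $H:=\Phi''/\Phi'$ and $f''/f=\lambda'+\lambda^2$ one gets $(G')^2\Phi M_1'/f=\lambda'G'+(\lambda^2-\lambda H)G'+(G^2-\lambda G)H'$; the right-hand side is then handled as a quadratic in $\lambda$, the hypothesis $\lambda'<G'$ from \cref{ass:uniqueness of y0} is inserted, and the relation $G'=(H-G)G$ together with the Tur\'an inequalities of \cref{lemma: Turan like inequality for Phi} reduces everything to the perfect-square identity $4(G')^3-(GH'+G'H)^2+4G'G^2H'=-G^2\,(H'+H^2+2G^2-3GH)^2$. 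No ODE for $\Phi$, no $k$, and no case distinctions enter. So the part you left open is precisely the part where the paper's argument lives; to complete your route you would have to supply a closing estimate of comparable strength, and the balance of $k+\beta$ against $\beta u\psi$ that you propose is not it.
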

\begin{proof}
	Let $G:= \Phi'/\Phi$ and $H:= \Phi''/\Phi'$. 
	We have $G,G',H,H'>0$ and $G<H$ by \cref{lemma: Turan like inequality for Phi}.
	With $\lambda(y) = f'(y)/f(y) > 0$, thus $f''/f = \lambda' + \lambda^2$, we get
	\[
		(G')^2 \Phi M_1' / f = \lambda' G' + (\lambda^2-\lambda H)G' + (G^2-\lambda G)H'.
	\]
	So $M_1'(y) < 0$ if and only if $\lambda'(y) G'(y) < q(\lambda(y))$ where the right-hand side is $q(\lambda):= (H-\lambda) \lambda  G' + (\lambda - G) G H'$.
	The function $q$ is quadratic in $\lambda$ and takes its minimum in
	\[
		\lambda^*:= \frac{HG'+GH'}{2G'}	\qquad \text{ with value } \qquad q(\lambda^*) = \frac{(HG'+GH')^2}{4G'} - G^2H'.
	\]
	Note also, that $G' = (H-G)G$.  
We find that
	\begin{align*}
		4G' \, (\lambda' G' - q(\lambda)) &\le 4 G' \, (\lambda'G' - q(\lambda^*)) 
			< 4G' \, \paren[\big]{(G')^2 - q(\lambda^*)}
	\\		&= 4 (G')^3 - (GH'+G'H)^2 + 4 G' G^2 H'
	\\		&= G^2 \paren[\Big]{ 4G (H-G)^3 - \paren[\big]{H' + (H-G)H}^2 + 4 (H-G)GH' }
	\\		&= -G^2 \paren[\big]{ H' + H^2 + 2G^2 - 3GH }^2 \le 0,
	\end{align*}
 using that  $\lambda'(y) < G'(y)$, $y\in\RR$,  by \cref{ass:uniqueness of y0}. 
	So $M_1'(y) < 0$ for all $y \in \RR$.
\end{proof}

\begin{lemma}\label{lemma: boundary}
	Let $f$ satisfy \cref{ass:uniqueness of y0,ass:uniqueness of yinf,ass:existence of y0 and yinf}.
	Then there exists a unique solution $\theta \mapsto \boundary(\theta)$, $\theta \in [0,\infty)$, of the ODE 
	\begin{equation}\label{eq:boundary}
		\boundary' = M_2(\boundary) / M_1'(\boundary),\quad  \boundary(0) = y_0, 
	\end{equation}
	and $\boundary$	is strictly decreasing to $\lim_{\theta \to \infty} \boundary(\theta) = y_{\infty}$ (with $y_0$ and $y_\infty$ from \cref{ass:existence of y0 and yinf}).
\end{lemma}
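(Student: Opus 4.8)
The plan is to treat \eqref{eq:boundary} as a scalar autonomous ODE $\boundary' = F(\boundary)$ with $F := M_2/M_1'$ and run a Picard--Lindel\"of argument combined with a phase-line (equilibrium and confinement) analysis. First I would record the regularity of $F$: the function $\Phi=\Phi_\delta$ is real-analytic (a Hermite function of the affine argument in \eqref{eq:Phi via H}), and by the Tur\'an-type inequality $(\Phi')^2 < \Phi\Phi''$ from \cref{lemma: Turan like inequality for Phi} the denominator $(\Phi')^2 - \Phi\Phi''$ in \eqref{def: M1 and M2} never vanishes; since $f\in C^3$ by \cref{ass:uniqueness of y0}, this makes $M_1\in C^2$ and $M_2\in C^1$, and together with $M_1'<0$ everywhere (by \cref{lemma: M1 monotonicity}) the quotient $F = M_2/M_1'$ is $C^1$ on all of $\RR$, hence locally Lipschitz. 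Local existence and uniqueness of the solution with $\boundary(0)=y_0$ follow.

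Next I would locate $y_0$ relative to $y_\infty$ and fix the sign of $F$. Writing $\lambda:=f'/f$, \cref{ass:uniqueness of y0} says $\lambda-\Phi'/\Phi$ is strictly decreasing and (by \cref{ass:existence of y0 and yinf}) vanishes at $y_0$, while \cref{ass:uniqueness of yinf} says $\lambda-\Phi''/\Phi'$ is strictly decreasing and vanishes at $y_\infty$. Since $\Phi'/\Phi<\Phi''/\Phi'$ everywhere (again the Tur\'an inequality), at $y_0$ we get $\lambda(y_0)=(\Phi'/\Phi)(y_0)<(\Phi''/\Phi')(y_0)$, so $(\lambda-\Phi''/\Phi')(y_0)<0$, which by monotonicity forces $y_0>y_\infty$. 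The same facts pin down $M_2$: from $f'\Phi'-f\Phi''=f\Phi'\,(\lambda-\Phi''/\Phi')$ with $f,\Phi'>0$, the numerator of $M_2$ is negative precisely for $y>y_\infty$, and as $(\Phi')^2-\Phi\Phi''<0$ throughout, $M_2>0$ on $(y_\infty,\infty)$ with $M_2(y_\infty)=0$. Combined with $M_1'<0$, this gives $F<0$ on $(y_\infty,y_0]$ and $F(y_\infty)=0$; in particular $\boundary$ is strictly decreasing as long as it stays above $y_\infty$, which also retroactively justifies the claim $M_2(\boundary(\theta))>0$ used just before the statement.

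The core of the argument is then global existence and the limiting behavior. Since $y_\infty$ is an equilibrium of the autonomous ODE and $F$ is locally Lipschitz, uniqueness prevents the solution starting at $y_0>y_\infty$ from reaching $y_\infty$ in finite time; hence on its maximal interval of existence $\boundary$ is valued in $(y_\infty,y_0]$, a compact set on which $F$ is bounded, so by the standard continuation criterion the maximal interval is $[0,\infty)$. Finally $\boundary$ is decreasing and bounded below by $y_\infty$, so $\boundary(\theta)\to L$ for some $L\in[y_\infty,y_0]$; since $\boundary(\theta)-y_0=\int_0^\theta F(\boundary(s))\diff s$ converges as $\theta\to\infty$ while $F(\boundary(s))\to F(L)$ by continuity, the integrand limit must vanish, i.e.\ $M_2(L)=0$, hence $L=y_\infty$. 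I expect the only genuinely delicate point to be ensuring non-degeneracy of $F$ (the denominators $(\Phi')^2-\Phi\Phi''$ and $M_1'$ never vanishing) and the ordering $y_0>y_\infty$; once \cref{lemma: Turan like inequality for Phi,lemma: M1 monotonicity} are in hand the rest is routine ODE phase-line analysis.
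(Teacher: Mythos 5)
Your proposal is correct and follows essentially the same route as the paper: local Lipschitz continuity of $M_2/M_1'$ (via \cref{lemma: Turan like inequality for Phi,lemma: M1 monotonicity}) gives a unique maximal solution, the sign facts $M_2>0$ on $(y_\infty,y_0]$ and $M_1'<0$ give strict decrease, confinement above the stationary level $y_\infty$ (the paper phrases this as non-crossing of trajectories, you as uniqueness at the equilibrium — the same fact) yields global existence, and the limit is identified as $y_\infty$. The only cosmetic difference is the last step: the paper integrates $\boundary^{-1}(y)=\int_{y_0}^y (M_1'/M_2)(x)\diff x$ to see every level in $(y_\infty,y_0]$ is reached in finite time, while you argue the limit must be a zero of $M_2$; both are routine and valid.
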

\begin{proof}
	Since $M_2/M_1'$ is locally Lipschitz by $f\in C^3(\RR)$, there exists a unique maximal solution $\boundary : [0,\theta_{\max}) \to \RR$ of \eqref{eq:boundary}.
	We have $M_2(\boundary(\theta)) > 0$ and $M_1'<0$ by \cref{lemma: M1 monotonicity}, thus $\boundary' < 0$.
	Assume $\theta_{\max} < \infty$, which implies $\lim_{\theta \to \theta_{\max}} \boundary(\theta) = -\infty$.
	However, note that $\braces{\paren{\theta, \boundary(\theta)} : 0 \le \theta < \theta_{\max}}$ and $[0,\infty) \times \{y_\infty\}$ are trajectories of the two-dimensional autonomous dynamical system induced by the field $(\theta, y) \mapsto \paren{1, M_2(y)/M_1'(y)}$.
	Since trajectories of autonomous dynamical systems cannot cross, and $y_\infty < y_0$ by \cref{lemma: Turan like inequality for Phi}, we must have $y_\infty < \boundary(\theta)$ for all $\theta \in [0,\theta_{\max})$, which contradicts $\theta_{\max} < \infty$.
	
	Moreover, $\boundary^{-1}(y) = \int_{y_0}^y (M_1'/M_2)(x)\diff x$ is finite for every $y\in (y_\infty, y_0]$. Since $\theta_{\max} = \infty$, it follows that $ \boundary(\theta) \to y_\infty$ as $\theta\to \infty$.
\end{proof}

By considering the first variation $\functionalVariation(J + \lagrangeMultiplier K)$, we found a candidate boundary function $\boundary$ in terms of a possible extremum $w : [0,R] \to \RR$ of $J + \lagrangeMultiplier K$.
Calculating the second variation $\functionalVariation^2(J + \lagrangeMultiplier K)$ at $w$, we find that $w$ is indeed a local maximizer.
\begin{lemma} \label{lemma: strict local maximizer w variational problem}
	The functional $\hat J:= J+\lagrangeMultiplier K: C^1([0,R]) \to \RR$ defined by \eqref{problem: maximize with w} -- \eqref{problem: maximize with w, subsidiary condition} with $\lagrangeMultiplier = -f(y_0) e^{-R}$ has a strict local maximizer $w(r) = \boundary(\transformThetaToR^{-1}(r))$, with $\boundary$ solving \eqref{eq:boundary}, in the following sense.
	There exists $\varepsilon > 0$ such that for all perturbations $0 \not\equiv h \in C^1([0,R])$ with endpoints $h(0) = h(R) = 0$ and $\norm{h}_{W^{1,\infty}} = \norm{h}_\infty \vee \norm{h'}_\infty < \varepsilon$ it holds
	\[
		\hat J(w+h) < \hat J(w).
	\]
\end{lemma}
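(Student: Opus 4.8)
The plan is to exploit a structural feature that the derivation of $\boundary$ already used implicitly: the Lagrangian integrand $\hat F(r,w,w') := F(r,w,w') + \lagrangeMultiplier\,G(r,w,w')$ is \emph{affine in $w'$}. Writing $\hat F = \hat F_0(r,w) + \hat F_1(r,w)\,w'$ with $\hat F_1(r,w) = f(w)e^{-(R-r)} + \lagrangeMultiplier$ and $\hat F_0(r,w) = \hat F_1(r,w)\,\Phi(w)/\Phi'(w)$, one has $\hat F_{w'w'}\equiv 0$, so the Legendre form of the second variation degenerates and the usual strong-Legendre-plus-Jacobi route to a weak local extremum is unavailable. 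Instead I would remove the $w'$-dependence entirely. Put $\mathcal F_1(r,w):=\int_{y_0}^w\hat F_1(r,s)\diff s$, so $\partial_w\mathcal F_1=\hat F_1$ and $\hat F_1(r,u)u'=\frac{\diff}{\diff r}\mathcal F_1(r,u)-\partial_r\mathcal F_1(r,u)$. Integrating this identity over $[0,R]$ for $u=w$ and for $u=w+h$ (any $C^1$ perturbation with $h(0)=h(R)=0$) and subtracting, the endpoint contributions cancel because $w+h$ and $w$ agree at $0$ and $R$, giving
\[
	\hat J(w+h)-\hat J(w)=\int_0^R\bigl[\Xi(r,w(r)+h(r))-\Xi(r,w(r))\bigr]\diff r,\qquad \Xi:=\hat F_0-\partial_r\mathcal F_1 ,
\]
a functional that no longer sees $h'$; here $\Xi$ is $C^2$ in its second argument since $f\in C^3$ and $\Phi$ is smooth and positive with $\Phi'>0$.

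Next I would Taylor-expand $\Xi$ pointwise in $r$: $\Xi(r,w+h)-\Xi(r,w)=\Xi_w(r,w)h+\tfrac12\Xi_{ww}(r,\xi_r)h^2$ for some $\xi_r$ between $w(r)$ and $w(r)+h(r)$ (the remainder $r\mapsto\Xi_{ww}(r,\xi_r)h(r)^2$ is continuous, being twice a difference of continuous functions of $r$). A direct check shows that $\Xi_w(r,w(r))=\hat F_{0,w}-\hat F_{1,r}$ is precisely the left-hand side of the Euler--Lagrange equation \eqref{eq: Euler-Lagrange} evaluated along $w$; since $w=\boundary\circ\transformThetaToR^{-1}$ was built to satisfy it — equivalently, $\boundary$ satisfies \eqref{eq: simplified Euler Lagrange}, with the constant of integration fixed by the defining relation $f'(y_0)\Phi(y_0)=f(y_0)\Phi'(y_0)$ — the linear term vanishes and
\[
	\hat J(w+h)-\hat J(w)=\tfrac12\int_0^R\Xi_{ww}(r,\xi_r)\,h(r)^2\diff r .
\]

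The crux is to show $\Xi_{ww}(r,w(r))<0$ for every $r\in[0,R]$. Let $\mathcal E(r,y)$ be the difference of the two sides of \eqref{eq: simplified Euler Lagrange}, namely $\mathcal E(r,y)=e^{r}\Phi(y)\bigl(f'(y)\Phi'(y)-f(y)\Phi''(y)\bigr)-f(y_0)\bigl(\Phi'(y)^2-\Phi(y)\Phi''(y)\bigr)$; since \eqref{eq: simplified Euler Lagrange} is obtained from \eqref{eq: Euler-Lagrange} by multiplication with $e^{R}\Phi'(w)^2$, one has the algebraic identity $\mathcal E(r,y)=e^{R}\Phi'(y)^2\,\Xi_w(r,y)$, so along the curve (where $\Xi_w=\mathcal E=0$) the signs of $\Xi_{ww}(r,w(r))$ and $\mathcal E_w(r,w(r))$ coincide. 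Differentiating $\mathcal E(r,w(r))=0$ in $r$ yields $\mathcal E_r(r,w(r))+\mathcal E_w(r,w(r))\,w'(r)=0$. The explicit $r$-derivative $\mathcal E_r(r,y)=e^{r}\Phi(y)\bigl(f'(y)\Phi'(y)-f(y)\Phi''(y)\bigr)$ equals, on the curve, $f(y_0)\bigl(\Phi'(w(r))^2-\Phi(w(r))\Phi''(w(r))\bigr)$ by \eqref{eq: simplified Euler Lagrange}, and this is strictly negative because $f(y_0)>0$ and $(\Phi')^2<\Phi\Phi''$ by \cref{lemma: Turan like inequality for Phi}. Moreover $w=\boundary\circ\transformThetaToR^{-1}$ is strictly decreasing: $\boundary'<0$ and $\transformThetaToR'=(1-\boundary')\Phi'(\boundary)/\Phi(\boundary)>0$ by \cref{lemma: boundary}, so $w'<0$ on $[0,R]$. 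From $\mathcal E_w\,w'=-\mathcal E_r>0$ and $w'<0$ we conclude $\mathcal E_w(r,w(r))<0$, hence $\Xi_{ww}(r,w(r))<0$, for all $r\in[0,R]$.

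To finish, by continuity of $\Xi_{ww}$ on the compact set $\{(r,\eta):r\in[0,R],\ |\eta-w(r)|\le 1\}$ there are $\varepsilon\in(0,1)$ and $c>0$ with $\Xi_{ww}(r,\eta)\le-c$ whenever $r\in[0,R]$ and $|\eta-w(r)|\le\varepsilon$. Then $\norm{h}_\infty<\varepsilon$ forces $|\xi_r-w(r)|\le|h(r)|<\varepsilon$, so $\hat J(w+h)-\hat J(w)\le-\tfrac c2\int_0^R h(r)^2\diff r$, which is $<0$ unless $h\equiv0$; this is a fortiori true under the stated, more restrictive bound $\norm{h}_{W^{1,\infty}}<\varepsilon$. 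The one genuine obstacle is the degeneracy $\hat F_{w'w'}\equiv0$, which rules out a classical Jacobi-condition argument and forces the reduction to the $w'$-free functional $\Xi$; granted that, negativity of the second variation along the optimal curve follows almost for free from the facts $(\Phi')^2<\Phi\Phi''$ and $\boundary'<0$ established earlier.
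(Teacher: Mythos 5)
Your proposal is correct, and it reaches the same decisive sign condition as the paper but packages the argument differently. The paper uses the Gelfand--Fomin second-variation formula: since $\hat F$ is affine in $w'$, the coefficient $P=\tfrac12\hat F_{w'w'}$ vanishes and $\functionalVariation^2\hat J[w;h]=\int_0^R Q\,h^2\diff r$; differentiating the Euler--Lagrange identity along $w$ (eq.~\eqref{eq:diff Euler Lagrange}) together with \cref{lemma: Turan like inequality for Phi} and $w'<0$ gives $Q<-\kappa<0$, and then a third-order Taylor remainder involving $\hat F_{www}$ and $\hat F_{www'}$ must be controlled, which is exactly why the paper needs $\norm{h}_{W^{1,\infty}}$ small. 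You instead exploit the affine structure at the level of the functional itself: writing $\hat F=\hat F_0+\hat F_1 w'$ and using that $\hat F_1(r,u)u'$ is an exact $r$-derivative up to $\partial_r\mathcal F_1$, the fixed endpoints make $\hat J(w+h)-\hat J(w)=\int_0^R[\Xi(r,w+h)-\Xi(r,w)]\diff r$ with $\Xi$ free of $w'$; a one-variable Taylor expansion with Lagrange remainder then suffices, the linear term vanishing because $\Xi_w$ is exactly the Euler--Lagrange expression (your identity $\mathcal E=e^R(\Phi')^2\Xi_w$ checks out, and your $\Xi_{ww}(r,w(r))$ coincides with $2Q$ in the paper's notation). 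The negativity $\Xi_{ww}(r,w(r))<0$ is obtained by the same mechanism as in the paper — differentiate $\mathcal E(r,w(r))=0$ in $r$, use \eqref{eq: simplified Euler Lagrange}, $(\Phi')^2<\Phi\Phi''$ from \cref{lemma: Turan like inequality for Phi}, and $w'<0$ from \cref{lemma: boundary} — so the analytic core is identical. What your route buys is the elimination of $h'$ altogether: no third-derivative remainder estimate is needed, only $\norm{h}_\infty<\varepsilon$, so you in fact prove strong (sup-norm) local maximality, which is slightly stronger than the stated $W^{1,\infty}$ version; what the paper's route buys is that it stays within the standard second-variation framework it cites. Both arguments rely, as the paper implicitly does, on the fact that $w=\boundary\circ\transformThetaToR^{-1}$ satisfies the integrated relation \eqref{eq: simplified Euler Lagrange} (it holds at $r=0$ by the defining condition for $y_0$ and propagates by the ODE), so no gap arises there.
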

\begin{proof}
For a $C^1$-perturbation $h:[0,R] \to \RR$ of $w$ with $h(0) = h(R) = 0$ we have by \cite[Sect.~5.25, (10) and (11)]{GelfandFomin00}
\begin{equation*}
	\functionalVariation^2(J + \lagrangeMultiplier K)[w;h] = \int_0^R \paren[\big]{P h'(r)^2 + Q h(r)^2} \diff r
\end{equation*}
with $P = P(r,w(r),w'(r))$ and $Q=Q(r,w(r),w'(r))$ given by
\begin{align*}
	P &= \tfrac{1}{2} \paren[\big]{ F_{w'w'} + \lagrangeMultiplier G_{w'w'}} = 0,
\\
	Q &= \frac{1}{2} \paren[\Big]{ F_{ww} + \lagrangeMultiplier G_{ww} - \frac{\diff}{\diff r} \paren[\Big]{ F_{ww'} + \lagrangeMultiplier G_{ww'} } }
\\		
	&= \frac{1}{2} e^{-(R-r)} \paren[\bigg]{ \frac{\Phi}{\Phi'} f'' + 2 \paren[\Big]{ \frac{\Phi}{\Phi'} }' f' + \paren[\Big]{ \frac{\Phi}{\Phi'} }'' f - f' } + \frac{1}{2} \paren[\Big]{ \frac{\Phi}{\Phi'} }'' \lagrangeMultiplier,
\end{align*}
with $f$, $\Phi$ and their derivatives being evaluated at $w(r)$ when no argument is mentioned.
Differentiating \eqref{eq: Euler-Lagrange} with respect to $r$ yields
\begin{align}
	0 &= \frac{\diff}{\diff r} e^{-(R-r)} \paren[\bigg]{ \frac{\Phi}{\Phi'} f'  + \paren[\Big]{ \frac{\Phi}{\Phi'} }' f - f } + \lagrangeMultiplier \, \frac{\diff}{\diff r} \paren[\Big]{ \frac{\Phi}{\Phi'} }'
	\nonumber
\\		&= e^{-(R-r)} \paren[\bigg]{ \frac{\Phi}{\Phi'} f' + \paren[\Big]{ \frac{\Phi}{\Phi'} }' f - f } 
	\nonumber
\\		&\qquad+ e^{-(R-r)} \paren[\bigg]{ \frac{\Phi}{\Phi'} f'' + 2 \paren[\Big]{ \frac{\Phi}{\Phi'} }' f'  + \paren[\Big]{ \frac{\Phi}{\Phi'} }'' f - f' } w' + \paren[\Big]{ \frac{\Phi}{\Phi'} }'' \lagrangeMultiplier\, w' 
	\nonumber
\\		&= e^{-(R-r)} \paren[\bigg]{ \frac{\Phi}{\Phi'} f' + \paren[\Big]{ \frac{\Phi}{\Phi'} }' f - f } + 2 Q w'
	\nonumber
\\	\label{eq:diff Euler Lagrange}
		&= e^{-(R-r)} \frac{\Phi}{(\Phi')^2} \, \paren[\big]{ f' \Phi' - f \Phi'' } + 2 Q w' \,.
\end{align}
By \cref{eq: simplified Euler Lagrange} and \cref{lemma: Turan like inequality for Phi}, the first summand in~\eqref{eq:diff Euler Lagrange} is negative along $w(r)$.
Since $w(r) = \boundary(\transformThetaToR^{-1}(r))$ and $\transformThetaToR^{-1}$ is strictly increasing, we have $w' < 0$ by \cref{lemma: boundary}.
So $Q(r,w(r),w'(r)) < -\kappa < 0$ on $[0,R]$ by \eqref{eq:diff Euler Lagrange} for some constant $\kappa = \kappa_R$, giving that the second variation is negative definite at $w$, i.e.~for $h \not\equiv 0$,
\begin{equation}\label{ineq: negative second variation}
	\functionalVariation^2(J \!+\! \lagrangeMultiplier K)[w;h] 
		\!=\! \int_0^R \! Q(r,w(r),w'(r)) h(r)^2 \diff r 
		\!<\! - \kappa \! \int_0^R \! h(r)^2 \diff r 
		\!<\! 0 \,.
\end{equation}

	To shorten notation, let $\hat F:= F + \lagrangeMultiplier G$, so $\hat J:=J + \lagrangeMultiplier K =  \int_0^R \hat F \diff r$. Unless the arguments are explicitly written, take $\hat F = \hat F(r, w(r), w'(r))$. Taylor's theorem gives
	\(
		\hat J(w+h) - \hat J(w) = \functionalVariation \hat J[w;h] + \functionalVariation^2 \hat J[w;h] + \scE(h)
	\)
	with first variation $\functionalVariation \hat J[w;h] = 0$ by~\eqref{eq: Euler-Lagrange}, second variation $\functionalVariation^2 \hat J[w;h] = \int_0^R Q h^2 \diff r < 0$ by~\eqref{ineq: negative second variation} and remainder 
	\[
		\scE(h) = \int_0^R \paren[\bigg]{ \sum_{\abs{\boldalpha} = 3} \partial^{\boldalpha} \hat F\paren[\big]{r, \boldw + \xi_r \boldh} \frac{\boldh^{\boldalpha}}{\boldalpha !} } \diff r
	\]
	for some $\xi_r \in [0,1]$, with $\boldw = (w(r), w'(r))^\transpose$, $\boldh = (h(r), h'(r))^\transpose$ and multi-index $\boldalpha \in \NN_0^2$, considering $\hat F(r,\cdot)$ as an function on $\RR^2$.
	Since $\hat F$ is affine in $w'$ we get
	\[
		\scE(h) = \int_0^R \! \paren[\Big]{ \frac{1}{6}\hat F_{www}(r, \boldw + \xi_r \boldh) h + \frac{1}{2} \hat F_{ww w'}(r, \boldw + \xi_r \boldh) h'} h^2 \diff r
		=: \int_0^R \!\!\! A h^2 \diff r
	\]
	Note that by compactness of $[0,R]$ we have uniform convergence 
	\[
		\sup_{r\in[0,R]}\sup_{\xi \in [0,1]} \abs[\big]{ A\paren[\big]{h(r), h'(r), w(r), w'(r), \xi, r} } \to 0
	\] 
	as $\norm{h}_{W^{1,\infty}} \to 0$.
	Now choose $\varepsilon > 0$ small enough such that 
	\[
		\abs[\big]{ A\paren[\big]{h(r), h'(r), w(r), w'(r), \xi, r} } < \kappa/2
	\]
	for all $r \in [0,R]$, $\xi \in [0,1]$ and $h$ with $\norm{h}_{W^{1,\infty}} < \varepsilon$.
	Hence, with $h \not\equiv 0$
	\[
		\hat J(w+h) - \hat J(w) = \int_0^R (Q + A) h^2 \diff r < -\frac{\kappa}{2} \int_0^R h^2 \diff r < 0\,. \qedhere
	\]
\end{proof}
Note that the definition $w(r):= \boundary(\transformThetaToR^{-1}(r))$ does not depend on the interval boundary $R$. 
Hence the optimizer $w$ over $[0,R]$ from \cref{lemma: strict local maximizer w variational problem} is optimal for all $R > 0$.
We can calculate the value $J(w)$ of our optimizer explicitly.
\begin{lemma}\label{lemma: optimal J value}
	For the optimal $w$ from \cref{lemma: strict local maximizer w variational problem} we have
	\[
		J(w) = (\Phi M_1)(\boundary(\assetsProcess_0)) = (\Phi M_1)(w(R)).
	\]
\end{lemma}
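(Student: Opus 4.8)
The plan is to exhibit the integrand of $J$ in \eqref{problem: maximize with w} as an exact $r$-derivative along the optimal curve $w$, so that $J(w)$ is obtained from endpoint values alone. Recall that $w$ is the $C^1$ function $w(r)=\boundary(\transformThetaToR^{-1}(r))$ with $\transformThetaToR'(\theta)=\bigl(1-\boundary'(\theta)\bigr)\Phi'(\boundary(\theta))/\Phi(\boundary(\theta))$, where $\boundary$ solves \eqref{eq:boundary} by \cref{lemma: boundary}; since $R=\transformThetaToR(\assetsProcess_0)$ we have $w(R)=\boundary(\assetsProcess_0)$, so the two right-hand sides coincide and it suffices to prove $J(w)=(\Phi M_1)(w(R))$.

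First I would record the relevant relations along $w$. Writing $\theta:=\transformThetaToR^{-1}(r)$ and $y:=w(r)=\boundary(\theta)$, the chain rule together with $\boundary'=M_2(\boundary)/M_1'(\boundary)$ from \eqref{eq:boundary} gives
\begin{equation*}
	w'(r)=\frac{\boundary'(\theta)}{\transformThetaToR'(\theta)}
		=\frac{M_2(y)\,\Phi(y)}{\bigl(M_1'(y)-M_2(y)\bigr)\Phi'(y)}\,,
	\qquad\text{so}\qquad
	w'(r)+\frac{\Phi(y)}{\Phi'(y)}=\frac{M_1'(y)\,\Phi(y)}{\bigl(M_1'(y)-M_2(y)\bigr)\Phi'(y)}\,,
\end{equation*}
where $M_1'-M_2<0$ because $M_1'<0$ by \cref{lemma: M1 monotonicity} and $M_2>0$ along $w$ by \eqref{eq: simplified Euler Lagrange} and \cref{lemma: Turan like inequality for Phi}. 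On the other hand, the definitions in \eqref{def: M1 and M2} directly yield the algebraic identity $M_1\Phi'+\Phi M_2=f\bigl((\Phi')^2-\Phi\Phi''\bigr)/\bigl((\Phi')^2-\Phi\Phi''\bigr)=f$, where $(\Phi')^2-\Phi\Phi''\ne 0$ by \cref{lemma: Turan like inequality for Phi}.

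The main step is then a direct computation showing that, along $w$,
\begin{align*}
	\frac{\diff}{\diff r}\Bigl[e^{-(R-r)}\,(\Phi M_1)\bigl(w(r)\bigr)\Bigr]
		&=e^{-(R-r)}\Bigl[\Phi M_1+\bigl(\Phi'M_1+\Phi M_1'\bigr)w'\Bigr]
	\\	&=e^{-(R-r)}\,f\,\frac{M_1'\,\Phi}{\bigl(M_1'-M_2\bigr)\Phi'}
		=f\bigl(w(r)\bigr)\,e^{-(R-r)}\Bigl(w'(r)+\frac{\Phi(w(r))}{\Phi'(w(r))}\Bigr),
\end{align*}
which is exactly the integrand of $J$. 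For the middle equality one substitutes the above expression for $w'$, collects everything over the common denominator $\bigl(M_1'-M_2\bigr)\Phi'$, notes that the $M_1M_2$-terms cancel, and uses $M_1\Phi'+\Phi M_2=f$; the last equality is just the displayed formula for $w'+\Phi/\Phi'$. Integrating from $0$ to $R$ then gives $J(w)=(\Phi M_1)(w(R))-e^{-R}(\Phi M_1)(w(0))$.

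It remains to observe that the boundary term at $r=0$ vanishes. Indeed $w(0)=y_0$, and $y_0$ satisfies $f'(y_0)\Phi(y_0)=f(y_0)\Phi'(y_0)$ (the condition on $y_0$ obtained from \eqref{eq: simplified Euler Lagrange} at $r=0$, equivalently from the natural boundary condition), so $M_1(y_0)=\bigl(f(y_0)\Phi'(y_0)-f'(y_0)\Phi(y_0)\bigr)/\bigl((\Phi'(y_0))^2-\Phi(y_0)\Phi''(y_0)\bigr)=0$. Hence $J(w)=(\Phi M_1)(w(R))=(\Phi M_1)(\boundary(\assetsProcess_0))$, as claimed. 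The only genuine obstacle is the computation in the displayed total-derivative identity, but after clearing denominators and cancelling it reduces to the one-line identity $M_1\Phi'+\Phi M_2=f$, so no analytic input beyond \cref{lemma: Turan like inequality for Phi} and \cref{lemma: M1 monotonicity} is needed.
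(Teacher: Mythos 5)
Your proof is correct. The chain-rule formula for $w'$ along the boundary, the resulting expression $w'+\Phi/\Phi'=M_1'\Phi/\bigl((M_1'-M_2)\Phi'\bigr)$, the algebraic identity $M_1\Phi'+\Phi M_2=f$, and the cancellation that exhibits the $J$-integrand as $\frac{\diff}{\diff r}\bigl[e^{-(R-r)}(\Phi M_1)(w(r))\bigr]$ all check out, and the boundary term at $r=0$ indeed vanishes because $M_1(y_0)=0$. The route is genuinely different in mechanics from the paper's: the paper starts from the $\ell$-parametrized form \eqref{eq: expected proceeds with y and rho}, invokes the un-differentiated Euler--Lagrange relation \eqref{eq: simplified Euler Lagrange} in the form $e^{r}=f(y_0)/(\Phi M_2)(w(r))$, changes variables to the state coordinate $x=\boundary(\ell)$ via \eqref{eq:boundary}, and then integrates using the explicit antiderivative $\bigl(f\Phi'-f'\Phi\bigr)/\bigl(f'\Phi'-f\Phi''\bigr)=M_1/M_2$, whose value at $y_0$ vanishes. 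You instead stay entirely in the $r$-coordinate and never use the first-integral form of the Euler--Lagrange equation, only the ODE \eqref{eq:boundary} plus the one-line identity $M_1\Phi'+\Phi M_2=f$; this makes the argument slightly more self-contained (no change of variables, no appeal to \eqref{eq: simplified Euler Lagrange} beyond the characterization of $y_0$), at the cost of a somewhat longer total-derivative verification. Both proofs ultimately rest on the same two facts — the boundary ODE and $M_1(y_0)=0$ — so the approaches buy comparable generality; yours is a clean "exact derivative" reformulation of the paper's antiderivative computation.
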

\begin{proof}
	By direct calculation we have $\squeeze{ {f M_1'}/\paren{\Phi M_2^2} \!=\! \paren{\paren{f \Phi' - f'\Phi}/\paren{f'\Phi'-f\Phi''}}' }$.
	Moreover, \eqref{eq: simplified Euler Lagrange} gives $e^r = f(y_0) / (\Phi M_2)(w(r))$.
	With $r= \transformThetaToR(\ell)$ and using \eqref{eq:boundary}, we get from \eqref{eq: expected proceeds with y and rho} that
	\begin{align*}
		J(w) &= e^{-\transformThetaToR(\assetsProcess_0)} \int_0^{\assetsProcess_0} f(\boundary(\ell)) e^{\transformThetaToR(\ell)} \diff \ell
	\\		&= (\Phi M_2)(\boundary(\assetsProcess_0)) \int_0^{\assetsProcess_0} \paren[\Big]{\frac{f}{\Phi M_2}}(\boundary(\ell)) \diff \ell
	\\		&= (\Phi M_2)(\boundary(\assetsProcess_0)) \int_{y_0}^{\boundary(\assetsProcess_0)} \paren[\Big]{ \frac{f M_1'}{\Phi M_2^2}}(x) \diff x
	\\		&= (\Phi M_2)(\boundary(\assetsProcess_0)) \brackets[\bigg]{ \frac{f \Phi' - f'\Phi}{f'\Phi'-f\Phi''} }_{y_0}^{\boundary(\assetsProcess_0)}
	\\		&= (\Phi M_1)(\boundary(\assetsProcess_0)). \qedhere
	\end{align*}
\end{proof}

Now we can translate the results obtained so far back to the state space of impact and asset position.
The following theorem will be crucial for our analysis in the verification arguments in \cref{sec:value fct}.
\begin{theorem}\label{thm: one-sided optimal y}
	The function $\boundary:[0,\infty) \to \RR$ defined by  \cref{eq:boundary} is a (one-sided) local maximizer of $\EE[\proceedsProcess_\infty\paren{\reflectionStrategy{\boundary}{\assetsProcess_0}} ]$ in the sense that, for every $\theta > 0$ there exists  $\varepsilon > 0$ such that for any decreasing $\tilde \boundary \in C^1( [0,\infty))$  
	with $\boundary(\cdot) \le \tilde \boundary(\cdot) \le y_0$,
	$\boundary = \tilde \boundary$ on $[\theta, \infty)$ 
	and $0 < \norm{\boundary-\tilde \boundary}_{W^{1,\infty}} < \varepsilon$, it holds 
	\[
		\EE\brackets[\big]{ \proceedsProcess_\infty\paren[\big]{\reflectionStrategy{\boundary}{\theta}} } > \EE\brackets[\big]{ \proceedsProcess_\infty\paren[\big]{\reflectionStrategy{\tilde \boundary}{\theta}} }.
	\]
\end{theorem}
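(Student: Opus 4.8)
The plan is to transfer the calculus-of-variations result \cref{lemma: strict local maximizer w variational problem} back through the change of coordinates $\boundary \leftrightarrow w$, being careful about two things: the one-sided nature of the admissible perturbations, and the fact that the reparametrization $\transformThetaToR$ itself depends on the perturbed boundary. Fix $\theta > 0$ and consider a competitor $\tilde\boundary$ as in the statement. Since $\tilde\boundary$ is decreasing with $\tilde\boundary \le y_0$, the function $\tilde\transformThetaToR(\ell) := \int_0^\ell (1 - \tilde\boundary'(x)) \frac{\Phi'(\tilde\boundary(x))}{\Phi(\tilde\boundary(x))} \diff x$ is again strictly increasing, so we may set $\tilde R := \tilde\transformThetaToR(\theta)$ and $\tilde w := \tilde\boundary \circ \tilde\transformThetaToR^{-1} : [0,\tilde R] \to \RR$, and by \eqref{eq: expected proceeds with y and rho} we have $\EE[\proceedsProcess_\infty(\reflectionStrategy{\tilde\boundary}{\theta})] = J_{\tilde R}(\tilde w)$, where I write $J_R$ for the functional on $[0,R]$. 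Similarly $\EE[\proceedsProcess_\infty(\reflectionStrategy{\boundary}{\theta})] = J_{R}(w)$ with $R := \transformThetaToR(\theta)$ and $w := \boundary \circ \transformThetaToR^{-1}$. Because $\boundary = \tilde\boundary$ on $[\theta,\infty)$ is irrelevant here (we only liquidate $\theta$ assets) but the matching near $\theta$ is what makes the endpoint $w(R) = \tilde w(\tilde R) = \boundary(\theta)$ coincide, the two optimization problems share the fixed right endpoint; the constraint $K(w) = \theta = K(\tilde w)$ is automatic by construction of $\transformThetaToR^{-1}$ and $\tilde\transformThetaToR^{-1}$. Since $w = \boundary \circ \transformThetaToR^{-1}$ does not depend on $R$ (as noted after \cref{lemma: strict local maximizer w variational problem}), and since $\hat J_R = J_R + \lagrangeMultiplier(R) K$ differs from $J_R$ by a constant multiple of the fixed quantity $\theta$ on the constraint surface, maximizing $J_R$ subject to $K = \theta$ is the same as maximizing $\hat J_R$, so \cref{lemma: strict local maximizer w variational problem} applies.

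The key step is therefore to show that when $\tilde\boundary$ is $W^{1,\infty}$-close to $\boundary$, the transported perturbation $h := \tilde w - w$ (suitably interpreted on a common interval) is $W^{1,\infty}$-small and nonzero, with $h(0)$ and $h(\tilde R)$ controlled. First I would observe that the map $\boundary \mapsto \transformThetaToR$ is Lipschitz from $W^{1,\infty}([0,\theta])$ into $C^1([0,\theta])$ on a neighbourhood of the fixed $\boundary$ (the integrand $(1 - \boundary')\,\Phi'/\Phi$ is a $C^1$ function of $(\boundary,\boundary')$ and $\Phi,\Phi' > 0$ are bounded below on the relevant compact range of values), hence $\tilde\transformThetaToR \to \transformThetaToR$ and $\tilde\transformThetaToR^{-1} \to \transformThetaToR^{-1}$ and $\tilde R \to R$ as $\norm{\boundary - \tilde\boundary}_{W^{1,\infty}} \to 0$. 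Consequently $\tilde w = \tilde\boundary \circ \tilde\transformThetaToR^{-1} \to \boundary \circ \transformThetaToR^{-1} = w$ in $W^{1,\infty}$ by composition estimates. The right endpoint matches exactly, $\tilde w(\tilde R) = \tilde\boundary(\theta) = \boundary(\theta) = w(R)$; the left endpoint need not match $w(0) = y_0$ exactly, but $\tilde w(0) = \tilde\boundary(0) \le y_0$ with $\tilde\boundary(0) \to y_0$, and here one uses the natural-boundary-condition structure: since $y_0 = \boundary(0) = w(0)$ arose precisely from optimizing over the free left endpoint, the quadratic functional $\hat J$ still has $w$ as a strict local max among $C^1$ competitors with the left endpoint free (not just pinned), because the first variation vanishes including the boundary term and the second variation $\int Q h^2 < -\kappa \int h^2$ is negative definite regardless of whether $h(0) = 0$. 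So I would restate (or re-examine the proof of) \cref{lemma: strict local maximizer w variational problem} to allow $h(0)$ free, which the proof already supports since the only use of $h(0) = 0$ is in dropping a boundary term that in fact vanishes by the natural boundary condition $\hat F_{w'}|_{r=0} = 0$.

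The remaining technicality is that $w$ and $\tilde w$ live on slightly different intervals $[0,R]$ and $[0,\tilde R]$; I would handle this by an affine rescaling $s \mapsto (\tilde R/R)\,$-type reparametrization of $\tilde w$ onto $[0,R]$, or more cleanly by noting that $J_{\tilde R}(\tilde w) = J_R(\tilde w \circ \psi)$ for the $C^1$ diffeomorphism $\psi : [0,R] \to [0,\tilde R]$ induced by matching $\transformThetaToR^{-1}$ with $\tilde\transformThetaToR^{-1}$ (this is just the statement that both equal $\EE[\proceedsProcess_\infty(\reflectionStrategy{\tilde\boundary}{\theta})]$, which is coordinate-free); then $\tilde w \circ \psi$ is a genuine $W^{1,\infty}$-small perturbation of $w$ on the fixed interval $[0,R]$, nonzero because $\tilde\boundary \not\equiv \boundary$, with fixed right endpoint and free left endpoint $\le y_0$. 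Applying the (strengthened) \cref{lemma: strict local maximizer w variational problem} then gives $J_R(\tilde w \circ \psi) < J_R(w)$, i.e.\ $\EE[\proceedsProcess_\infty(\reflectionStrategy{\tilde\boundary}{\theta})] < \EE[\proceedsProcess_\infty(\reflectionStrategy{\boundary}{\theta})]$, as claimed. The main obstacle I anticipate is precisely this bookkeeping around the $R$-dependence and the free endpoint: one must verify that the value $\EE[\proceedsProcess_\infty(\reflectionStrategy{\tilde\boundary}{\theta})]$ is genuinely computed by $J_{\tilde R}(\tilde w)$ with the same Lagrange-multiplier normalization (the multiplier $\lagrangeMultiplier(R) = -f(y_0)e^{-R}$ was pinned using $w(0) = y_0$, so for the competitor one should either keep the same $\lagrangeMultiplier$ and absorb the mismatch into the constraint term — legitimate since $K(\tilde w) = \theta$ is exactly the same constant — or argue directly with $J$ plus the hard constraint), and to make the composition/Lipschitz estimates rigorous enough to conclude $h \to 0$ in $W^{1,\infty}$ with a uniform rate. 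Everything else is a direct application of the already-established second-variation bound.
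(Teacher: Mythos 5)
Your overall plan — push the competitor through the $\transformThetaToR$-change of coordinates and invoke the second-variation bound of \cref{lemma: strict local maximizer w variational problem} — is also the paper's starting point, but the step where you dispose of the interval mismatch has a genuine gap, and it sits exactly where the paper has to do its real extra work. The identification of expected proceeds with the functional $J$ in \eqref{problem: maximize with w} is only valid in the \emph{intrinsic} parametrization $r=\transformThetaToR_{\tilde\boundary}(\ell)$, on the interval $[0,\tilde R]$ whose length $\tilde R=\transformThetaToR_{\tilde\boundary}(\theta)$ is itself a functional of the boundary; since $\boundary\le\tilde\boundary$, $\boundary\not\equiv\tilde\boundary$ and $\Phi'/\Phi$ is increasing, one has strictly $\tilde R>R$. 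The functional $J_R$ is not reparametrization-invariant (the weight $e^{-(R-r)}$ and the zero-order term $\Phi(w)/\Phi'(w)$ do not transform under $r=\psi(s)$), so your claimed identity $J_{\tilde R}(\tilde w)=J_R(\tilde w\circ\psi)$ — and likewise the affine-rescaling variant — is unjustified and in general false: composing with a diffeomorphism onto $[0,R]$ changes which boundary the function represents, and in particular you cannot arrange simultaneously $J_R(\tilde w\circ\psi)=\EE[\proceedsProcess_\infty(\reflectionStrategy{\tilde\boundary}{\theta})]$ and $K_R(\tilde w\circ\psi)=\theta$. Equivalently, if you compare on a common interval (the paper uses $[0,\hat R]$ with $\hat R=\tilde R$), the constraint values \emph{differ}: $K_{\hat R}(w)=\hat\theta>\theta=K_{\hat R}(\tilde w)$, so $w$ and the competitor are not two admissible functions for the same isoperimetric problem, and \cref{lemma: strict local maximizer w variational problem} alone cannot yield the comparison of proceeds. (Your concern about the free left endpoint is moot: $\boundary\le\tilde\boundary\le y_0$ together with $\boundary(0)=y_0$ forces $\tilde\boundary(0)=y_0$ exactly.)

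What is missing is the compensation for this horizon/constraint mismatch, which is the heart of the paper's argument: it writes $J_R(w)=(\Phi M_1)(w(R))-(\Phi M_1)(w(\hat R))+J_{\hat R}(w)$ using the closed form $J_r(w)=(\Phi M_1)(w(r))$ from \cref{lemma: optimal J value}, applies the local-maximizer property on $[0,\hat R]$ to the Lagrange-corrected functional $J_{\hat R}-e^{-\hat R}f(y_0)K_{\hat R}$, and is then left with the correction
\[
	\Psi(\eta)=(\Phi M_1)\bigl(\boundary(\hat\theta-\eta)\bigr)-(\Phi M_1)\bigl(\boundary(\hat\theta)\bigr)+e^{-\hat R}f(y_0)\,\eta,
	\qquad \eta:=\hat\theta-\theta>0,
\]
whose strict positivity for small $\eta$ must be proved separately via $\Psi(0)=0$ and $\Psi'(0)=-\bigl(\Phi' M_1 M_2/M_1'\bigr)\bigl(\boundary(\hat\theta)\bigr)>0$, using $M_1,M_2,\Phi'>0$ and \cref{lemma: M1 monotonicity}. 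Note that this is also precisely where the one-sidedness $\boundary\le\tilde\boundary$ enters (it gives $\eta>0$, and $\Psi>0$ is only known on that side); your argument never uses the one-sided hypothesis, which is a further indication that the reduction to a same-interval, same-constraint perturbation cannot be correct as stated. Without a substitute for this first-order correction step, the proposal does not go through.
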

\begin{proof}
	For sake of clarity, we write $J = J_R$ and $K = K_R$ to emphasize the dependence of the functionals $J,K$ on $R$.
	Call $w(r)$ the parametrization of $\boundary$ and $\tilde w(r)$ the parametrization of $\tilde \boundary$.
	
	Fix $\theta > 0$ and choose $R, \hat R, \hat \theta$ such that $\boundary(\theta) = w(R)$, $\tilde \boundary(\theta) = \tilde w(\hat R)$ and $w(\hat R) = \boundary(\hat \theta)$.
	So $R:= \transformThetaToR_\boundary(\theta)$, $\hat R:= \transformThetaToR_{\tilde \boundary}(\theta) = \int_0^\theta \frac{\Phi'}{\Phi}(\tilde \boundary(x)) \diff x + \int_{\tilde \boundary(\theta)}^{\tilde \boundary(0)} \frac{\Phi'}{\Phi}(u) \diff u$ and $\hat \theta:= \transformThetaToR_\boundary^{-1}(\hat R)$.
	By $\boundary \not\equiv \tilde \boundary$, $\boundary(\cdot) \le \tilde \boundary(\cdot)$ with equality outside $(0,\theta)$ and monotonicity of $\Phi'/\Phi$, we have $\hat R > R$ and thus $\hat \theta > \theta$.
	
	Now, $K_{\hat R}(w) = \hat \theta$ and $K_{\hat R}(\tilde w) = \theta$.
	Moreover, $J_r(w) = (\Phi M_1)(w(r))$ by \cref{lemma: optimal J value}.
	So if $\norm{w - \tilde w}_{W^{1,\infty}}$ is small enough, by \cref{lemma: strict local maximizer w variational problem} we get
	\begin{align*}
		\MoveEqLeft
		J_R(w) = \paren[\big]{\Phi M_1}\paren[\big]{w(R)} - \paren[\big]{\Phi M_1}\paren[\big]{w(\hat R)} + J_{\hat R}(w)
	\\
			&= \squeeze[2]{ \paren[\big]{\Phi M_1}\paren[\big]{w(R)} - \paren[\big]{\Phi M_1}\paren[\big]{w(\hat R)} + e^{-\hat R} f(y_0) \hat \theta + \paren[\big]{ J_{\hat R} - e^{-\hat R}f(y_0)  K_{\hat R} }(w) }
	\\
			&> \squeeze[2]{ \paren[\big]{\Phi M_1}\paren[\big]{w(R)} - \paren[\big]{\Phi M_1}\paren[\big]{w(\hat R)} + e^{-\hat R} f(y_0) \hat \theta + \paren[\big]{ J_{\hat R} - e^{-\hat R} f(y_0) K_{\hat R} }(\tilde w) }
	\\
			&= \paren[\big]{\Phi M_1}\paren[\big]{\boundary(\hat\theta - \eta)} - \paren[\big]{\Phi M_1}\paren[\big]{\boundary(\hat\theta)} + e^{-\hat R} f(y_0) \eta + J_{\hat R}(\tilde w) 
	\\
			&=: \Psi(\eta) + J_{\hat R}(\tilde w)\,.
	\end{align*}
	where  $\eta:= \hat \theta - \theta > 0$.
	By \eqref{eq: r via optimal w} we get $e^{-\hat R}f(y_0) = (\Phi M_2)(\boundary(\hat \theta))$.
	With~\eqref{eq: y' with M1 and M2}  follows
	\begin{align*}
		\Psi'(\eta) &= - \paren[\bigg]{(\Phi M_1)' \frac{M_2}{M_1'}}\paren[\big]{\boundary(\hat\theta - \eta)} + \paren[\big]{\Phi M_2}\paren[\big]{\boundary(\hat\theta) }
	\\
			&= - \paren[\bigg]{ \frac{\Phi' M_1 M_2}{M_1'} + \Phi M_2 }\paren[\big]{\boundary(\hat\theta - \eta)} + \paren[\big]{\Phi M_2}\paren[\big]{\boundary(\hat\theta)}.
	\end{align*}
	Hence $\Psi'(0) = - (\Phi' M_1 M_2 / M_1')(\boundary(\hat\theta))$. Since $M_1 > 0$ on  $(-\infty, y_0)$, $M_2 > 0$ on $(y_\infty, y_0]$, $M_1' < 0$ by \cref{lemma: M1 monotonicity} and $\Phi' > 0$, it follows $\Psi'(0) > 0$.
	So $\Psi(\eta) > 0$ for $\eta > 0$ small enough.
	Therefore we have by \eqref{problem: maximize with w}
	\[
		\EE\brackets[\big]{ \proceedsProcess_\infty\paren[\big]{\reflectionStrategy{\boundary}{\theta}} } 
			= J_R(w) > J_{\hat R}(\tilde w)
			= \EE\brackets[\big]{ \proceedsProcess_\infty\paren[\big]{\reflectionStrategy{\tilde \boundary}{\theta}} }.
	\]
	The bounds on $\eta$ and $\norm{w - \tilde w}_{W^{1,\infty}}$ are satisfied for small enough $\varepsilon > 0$, because $(\boundary,\ell) \mapsto \transformThetaToR_\boundary(\ell)$ and $(\boundary,\ell) \mapsto \transformThetaToR_\boundary^{-1}(\ell)$ are continuous in $W^{1,\infty} \times \RR$, so $\norm{w - \tilde w}_{W^{1,\infty}} \to 0$, $\hat R \to R$ and $\hat \theta \to \theta$ as $\varepsilon \to 0$.
\end{proof}

\section{Constructing the value function and verification}
\label{sec:value fct}

In this section, we construct a candidate for the value function and verify the variational inequality \eqref{eq:Variational} in \cref{lemma: wait inequality,lemma:inequality in sell region}, relying on results from the previous sections. This will be sufficient to conclude the proof of our main result, \cref{thm:optimal strategy}.

Having defined  a candidate boundary via the ODE \eqref{eq:boundary} to separate the sell and wait regions $\scS$ and $\scW$, we will now construct a solution  $V$ of the variational inequality \eqref{eq:Variational} that will give the value function of the optimal liquidation problem.
As a direct consequence of \cref{lemma: optimal J value}, we get its value along the boundary
\begin{equation}\label{eq: V on boundary}
	\bdryV(\theta):= V\paren[\big]{ \boundary(\theta),\theta } = \Phi\paren[\big]{ \boundary(\theta) } M_1\paren[\big]{ y(\theta) }\,.
\end{equation}
Inside the wait region $\scW$, which we assume is to the left of the boundary, we require $V=\waitV$ to satisfy $\frac{\impactVolatility^2}{2}V_{yy} + (\sigma \correlationBW \impactVolatility - \beta y)V_y = \delta V$.
Note that $\waitV$ solves the same ODE in $y$ as $\Phi$.
Since $V$ should be also monotonically increasing, the only possibility is that  $\waitV(y,\theta) = C(\theta)\Phi(y)$ for some increasing function $C:[0,\infty)\rightarrow [0,\infty)$.
Using the boundary condition $\waitV(\boundary(\theta),\theta) = \bdryV(\theta)$, in light of \cref{eq: V on boundary} we then have
\begin{equation}\label{def: V wait}
	\waitV(y,\theta) := \Phi(y) C(\theta)
\end{equation}
for $y \le \boundary(\theta)$ and $\theta \ge 0$, where $C(\theta) := M_1(\boundary(\theta))$.
On the other hand, in the sell region we require for $V = \bothSellV$ to satisfy $f = \bothSellV_y + \bothSellV_\theta$. We divide $\scS$ in two parts:
\begin{equation*}
\begin{split}
	\scS_1 &:= \{ (y,\theta) \in \RR \times (0,\infty) : \boundary(\theta) < y < y_0 + \theta \}\,,
\\	\scS_2 &:= \{ (y,\theta) \in \RR \times (0,\infty) : y_0 + \theta < y \}\,.
	\end{split}
\end{equation*}
Let $\Delta:= \Delta(y,\theta) \ge 0$ denote the $\norm{\cdot}_\infty$-distance of a point $(y,\theta) \in \overline\scS$ to the boundary $\partial \scS$ in direction $(-1,-1)$. This means in $\overline\scS_1$ (but not in $\scS_2$) that
\begin{equation}\label{eq:Delta and boundary}
	\boundary(\theta - \Delta) = y - \Delta \,.
\end{equation}
Inside $\overline\scS_1$, we need to have
\begin{equation} \label{def: V difficult sell region}
	\hardSellV(y,\theta) := \waitV(y - \Delta, \theta - \Delta) + \int_{y - \Delta}^y f(x) \diff x \,,
\end{equation}
since $\hardSellV_y + \hardSellV_\theta = f$ in $\overline\scS$ and $\hardSellV(\boundary(\theta),\theta) = \waitV(\boundary(\theta),\theta)$. Similarly, in $\overline{\scS}_2$,
\begin{equation}\label{eq:V in scS_2}
	V^{\scS_2}(y, \theta) := \int_{y-\theta}^y f(x)\diff x.
\end{equation}
To wrap up, the candidate value function is defined by:
\begin{equation}\label{eq: value fn constructed}
	V = V^\scW \text{ on }\overline\scW, \quad V = V^{\scS_{1}}  \text{ on } \overline\scS_{1}, \quad V = V^{\scS_{2}}  \text{ on }\overline\scS_{2}.
\end{equation}

The rest of this section is devoted to verifying that $V$  is a classical solution of the HJB variational inequality \eqref{eq:Variational} and thus concluding the proof of \cref{thm:optimal strategy} by an application of the martingale optimality principle. We first formalize the heuristic verification from \cref{sec:heuristics}.

\subsection{Martingale optimality principle}
\label{subsec: Martingale optimality principle}
Recall that $v$ is the value function of the optimal liquidation problem (cf.~\eqref{eq:def of value fn v}).
\begin{proposition}[Martingale optimality principle] \label{prop: supermatringale suffices}
	Consider a $C^{2,1}$ function $V:\RR \times [0,\infty) \to [0,\infty)$ with the following properties:
	\begin{enumerate}
		\item For every $\assetsProcess_{0-} \geq 0$, there exist constants $C_1, C_2$ so that
		\[
			V(y,\theta) \leq C_1 \exp(C_2 y)\vee 1 \quad \text{for all } (y,\theta)\in \RR\times [0, \assetsProcess_{0-}];
		\]
		\item For every $\assetsProcess_{0-}\geq 0$ and $A\in \monotoneStrategies{\assetsProcess_{0-}}$, the process $G$ from \eqref{eq:process G} is a supermartingale, where $Y = Y^{y,A}$ is defined in \eqref{def: impact dynamics}, and additionally $G_0(y;A) \le G_{0-}(y;A)$.
	\end{enumerate}	
	Then we have 
	\(
		\baseS_0 \cdot V(y, \theta) \geq v(y,\theta).
	\)\\
	Moreover, if there exists $A^{*} \in \monotoneStrategies{\assetsProcess_{0-}}$ such that $G(y;A^{*})$ is a martingale and  $G_0(y;A^*) = G_{0-}(y;A^*)$ holds, then we have
	\(
		\baseS_0 V(y, \theta) = v(y,\theta)
	\)
	and $v(y,\theta) = \EE[\proceedsProcess_\infty(y;A^*)]$ for $\assetsProcess_{0-} = \theta\ge 0$. In this case, any strategy $A$ for which $G(y;A)$ is not a martingale would be suboptimal.
\end{proposition}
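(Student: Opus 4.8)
The plan is to run the classical martingale-optimality verification in two directions. For the upper bound $v \le \baseS_0 V$: fix $(y,\theta)$ and an arbitrary $A \in \monotoneStrategies{\theta}$, and write $G = G(y;A)$. By hypothesis~(ii), $G$ is a supermartingale with $\EE[G_0] \le G_{0-} = \baseS_0 V(y,\theta)$, hence $\EE[G_T] \le \baseS_0 V(y,\theta)$ for every finite $T \ge 0$. Since $V \ge 0$ and $e^{-\gamma T}\baseS_T > 0$, we have $G_T \ge \proceedsProcess_T(y;A) \ge 0$, so $\EE[\proceedsProcess_T(y;A)] \le \baseS_0 V(y,\theta)$; letting $T \to \infty$ and using monotone convergence (the proceeds process is non-decreasing) yields $\EE[\proceedsProcess_\infty(y;A)] \le \baseS_0 V(y,\theta)$, and taking the supremum over $A$ gives $v(y,\theta) \le \baseS_0 V(y,\theta)$. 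Hypothesis~(i) is not needed for this direction, and nonnegativity of $G$ means no extra integrability input is required either.

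Next I would prove the equality and optimality of $A^*$. Suppose $G^* := G(y;A^*)$ is a martingale with $G^*_0 = G^*_{0-}$. Then $\baseS_0 V(y,\theta) = \EE[G^*_T] = \EE[\proceedsProcess_T(y;A^*)] + \EE\brackets[\big]{e^{-\gamma T}\baseS_T V(Y^*_T,\assetsProcess^*_T)}$, where $Y^* := Y^{y,A^*}$ and $\assetsProcess^*_T := \theta - A^*_T \in [0,\theta]$. The first term increases to $\EE[\proceedsProcess_\infty(y;A^*)]$ by monotone convergence, so everything hinges on showing that the second term tends to $0$. Here I would use the growth bound~(i) in the form $V(Y^*_T,\assetsProcess^*_T) \le C_1 e^{C_2 Y^*_T} + 1$, dominate $Y^*_T$ by the uncontrolled Ornstein--Uhlenbeck process $\bar Y_T := y e^{-\beta T} + \impactVolatility \int_0^T e^{-\beta (T-s)}\diff B_s$ (and bound $Y^*_T$ below by $\bar Y_T - \theta$, since $A^*$ is non-decreasing and bounded by $\theta$, to handle the sign of $C_2$), and use $e^{-\gamma T}\baseS_T = \baseS_0 \exp\bigl(-(\delta + \tfrac{\sigma^2}{2})T + \sigma W_T\bigr)$. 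The expectation then reduces to that of an exponential of the jointly Gaussian variable $\sigma W_T + C_2 \bar Y_T$, whose mean is $O(1)$ and whose variance is $\sigma^2 T + O(1)$, producing a bound of order $\baseS_0 \exp(-\delta T + O(1))$, which tends to $0$ because $\delta > 0$ by \cref{ass:decreasing prices} (the constant ``$+1$'' contributes only $\baseS_0 e^{-\delta T}$). Hence $\baseS_0 V(y,\theta) = \EE[\proceedsProcess_\infty(y;A^*)]$; combined with the upper bound and the trivial inequality $\EE[\proceedsProcess_\infty(y;A^*)] \le v(y,\theta)$, all three quantities coincide, so $A^*$ is optimal.

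Finally, for the suboptimality claim let $A$ be admissible with $G := G(y;A)$ not a martingale. The map $T \mapsto \EE[G_T]$ is non-increasing and bounded above by $\baseS_0 V(y,\theta)$; since a supermartingale on $[0,\infty)$ with constant expectation is a martingale, either $\EE[G_0] < G_{0-}$ already or $\EE[G_{T_0}] < \EE[G_0]$ for some $T_0 > 0$, and in either case $\inf_T \EE[G_T] < \baseS_0 V(y,\theta) = v(y,\theta)$. Arguing as in the first paragraph (using $\EE[\proceedsProcess_S(y;A)] \le \EE[G_T(y;A)]$ for all $S$, $T$), one gets $\EE[\proceedsProcess_\infty(y;A)] \le \inf_T \EE[G_T] < v(y,\theta)$, so $A$ is suboptimal.

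I expect the only non-routine step to be the decay $\EE[e^{-\gamma T}\baseS_T V(Y^*_T,\assetsProcess^*_T)] \to 0$. This is exactly where the growth control on $V$ (resting on \cref{ass:lambda is bounded}, to be established in \cref{lemma:V is smooth}) and the negative-drift condition \cref{ass:decreasing prices} enter, and one must keep careful track of the exponential rates so that the $+\tfrac{\sigma^2}{2}T$ produced by the Gaussian moment exactly cancels the $-\tfrac{\sigma^2}{2}T$ in $\baseS$, leaving the net rate $-\delta < 0$. Everything else is standard optional-sampling and monotone-convergence bookkeeping.
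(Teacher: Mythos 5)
Your proof is correct and follows the same overall architecture as the paper's (supermartingale upper bound, martingale equality, decay of the terminal term $\EE[e^{-\gamma T}\baseS_T V(Y_T,\assetsProcess_T)]$), but it executes the one non-routine estimate differently. The paper separates the factors by H\"older's inequality, bounding $\EE[\scE(\sigma W)_T V(Y_T,\assetsProcess_T)]$ by $\exp\paren[\big]{\tfrac{p-1}{2}\sigma^2 T}\,\EE[V(Y_T,\assetsProcess_T)^q]^{1/q}$ with the comparison $Y_T\le X_T$ against the uncontrolled OU process, and then chooses $p>1$ with $\tfrac{p-1}{2}\sigma^2<\delta$ so that the net rate is negative; you instead exploit joint Gaussianity of $(W_T,\bar Y_T)$ (including the $O(1)$ covariance coming from $\correlationBW$) and compute the exponential moment directly, so that the $+\tfrac{\sigma^2}{2}T$ from the Gaussian moment cancels the $-\tfrac{\sigma^2}{2}T$ in $\baseS$ and the rate $-\delta$ survives. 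Both are valid; H\"older is more robust (it needs only an exponential moment bound for the dominating process, not joint Gaussianity), while your computation is more elementary and avoids conjugate exponents. Two further points in your favour: you correctly observe that hypothesis (i) is not needed for the inequality $\baseS_0 V\ge v$, since $V\ge 0$ lets one simply drop the terminal term (the paper proves its decay already in the first direction, though it is only needed for the equality); your two-sided bound $\bar Y_T-\theta\le Y^*_T\le\bar Y_T$ also covers a possibly negative $C_2$, which the paper's argument implicitly takes nonnegative. Finally, you supply an explicit argument for the last assertion (strict suboptimality when $G$ fails to be a martingale), via the fact that a supermartingale with constant expectation is a martingale and $\EE[\proceedsProcess_\infty]\le\inf_T\EE[G_T]$; the paper's proof of the proposition leaves this claim essentially implicit, so this is a welcome addition rather than a deviation.
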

\begin{proof}
By the supermartingale property  we have for every $T\geq 0$
\begin{align}
	\baseS_0 V(Y_{0-}, \assetsProcess_{0-}) &\geq \EE[G_0(y;A)] 
		\geq \EE[\proceedsProcess_T(y;A) + e^{-\gamma T}\baseS_T V(Y_T, \assetsProcess_T)] \nonumber\\
		& =  \EE[\proceedsProcess_T(y;A)] + e^{-\gamma T} \EE[\baseS_T V(Y_T, \assetsProcess_T)
\nonumber]
\\		
		&= \EE[\proceedsProcess_T(y;A)] + e^{-\delta T} \baseS_0 \EE[\scE(\sigma W)_T V(Y_T, \assetsProcess_T)] \label{eq:mart opt ineq}.
\end{align}
By monotone convergence, the first summand in \eqref{eq:mart opt ineq} tends to $\EE[\proceedsProcess_{\infty}(y;A)]$ for $T\rightarrow \infty$. 
To see that the second summand converges to $0$, consider the Ornstein-Uhlenbeck process
\(
	\diff X_t = -\beta X_t \diff t + \impactVolatility \diff B_t, \; X_0 = y.
\)
An application of It\^{o}'s formula 
gives
\begin{equation}\label{eq:Y and X}
	e^{\beta t} (Y_t - X_t) = \int_{[0,t]} e^{\beta u} \diff \assetsProcess_u \quad \forall t\geq 0.
\end{equation}
Since $\assetsProcess$ is non-increasing, we conclude $Y_t \leq X_t$ for all $t\geq 0$. Let $p,q>1$ be conjugate, i.e.~$1=1/q + 1/p$. Using H\"older's inequality and the bound on $V$,
\begin{align*}
	\MoveEqLeft \EE\brackets[\big]{ \scE(\sigma W)_T V(Y_T, \assetsProcess_T) } 
		\leq \EE\brackets[\big]{ \scE(\sigma W)_T^p }^{1/p} \EE\brackets[\big]{ V(Y_T, \assetsProcess_T)^q }^{1/q}
\\		&= \EE\brackets[\big]{ \exp\paren[\big]{ p \sigma W_T - \tfrac{1}{2} p \sigma^2 T} }^{1/p} \EE[V(Y_T, \assetsProcess_T)^q]^{1/q}
\\		&= \EE[\scE(p\sigma W)_T]^{1/p} \exp\paren[\Big]{ \tfrac{1}{p} \paren[\big]{\tfrac{1}{2} p^2 \sigma^2 T - \tfrac{1}{2} p \sigma^2 T } } \EE\brackets[\big]{ V(Y_T, \assetsProcess_T)^q }^{1/q}
\\		&= \exp\paren[\Big]{ \frac{p-1}{2}\sigma^2T } \EE[V(Y_T, \assetsProcess_T)^q]^{1/q} 
\\		&\leq \exp\paren[\Big]{ \frac{p-1}{2}\sigma^2T } \EE [ C_1^q \exp(q C_2 Y_T) \vee 1]^{1/q}
\\		&\leq \exp\paren[\Big]{ \frac{p-1}{2}\sigma^2T } { \EE [ C_1^q \exp(qC_2 X_T) \vee 1 ] }^{1/q}.
\end{align*}
Using the fact that $X$ is a Gaussian process with mean $\EE[X_T] = y e^{-\beta T}$ and variance $\Var(X_T) = \frac{\impactVolatility^2}{2\beta}(1 - e^{-2\beta T})$, we get for $K:={ \EE [ C_1^q \exp(qC_2 X_T) \vee 1 ]}$ that
\begin{align*}
	K &\leq 1 + C_1^q \exp\paren[\Big]{ q C_2 \EE[X_T] + \frac{1}{2} q^2 C_2^2 \Var(X_T) }
\\
		&\leq 1 + C_1^q  \exp\paren[\Big]{ q C_2 y + \frac{\impactVolatility^2}{4\beta} q^2 C_2^2  }.
\end{align*}
This bound on $K$ is independent of $T$.
Now choosing $p>1$ such that $\frac{p-1}{2} \sigma^2 < \delta$ ensures that $\exp(-\delta T) \exp\paren{ \frac{p-1}{2}\sigma^2T }$ is exponentially decreasing in $T$, and thus the second summand in \eqref{eq:mart opt ineq} converges to 0 for $T\rightarrow \infty$. This implies that
\(
	\baseS_0 V(y, \theta) \geq \EE[\proceedsProcess_\infty(y;A)]
\) for all $A \in \monotoneStrategies{\theta}$
and yields the first part of the claim.
The second part follows similarly by noting that, if $A^*\in  \monotoneStrategies{\theta}$ is such that $G(y;A^*)$ is a martingale and $G_0(y;A) = G_{0-}(y;A)$, then we have equalities instead of inequalities in the estimates leading to \eqref{eq:mart opt ineq}. By taking $T\rightarrow\infty$ we conclude that $\baseS_0 V(y, \theta) =  \EE[\proceedsProcess_\infty(y;A^*)]$. Since $\baseS_0 V(y, \theta) \geq v(y,\theta)$ by the first part of the claim, we deduce the optimality of $A^*$.
\end{proof}

To justify later why the stochastic integrals in \eqref{eq:Ito for G} are 
true martingales, we need  the following technical 

\begin{lemma} \label{lemma: true martingales}
Let $\assetsProcess_{0-} \geq 0$ be given and $F\in C^{2,1}(\RR \times [0,\infty) ; \RR)$ be such that there exist constants $C_1, C_2 \geq 0$ with  $\abs{ F(y, \theta) } \leq C_1 \exp (C_2 y) \vee 1$ for all $(y,\theta)\in \RR\times [0,\assetsProcess_{0-}]$.  For an admissible strategy $A\in \monotoneStrategies{\assetsProcess_{0-}}$ let $Y^A =: Y$ denote the impact process defined by \eqref{def: impact dynamics} for $y\in \RR$. Then the stochastic integral processes
\[
	 \int_0^{\cdot}\baseS_uF(Y_u, \assetsProcess_u)\diff B_u\quad \text{and}\quad  \int_0^{\cdot}\baseS_uF(Y_u, \assetsProcess_u)\diff W_u \quad \text{are true martingales.}
\]
\end{lemma}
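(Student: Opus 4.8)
The plan is to show that each of the two stochastic integrals is a true martingale by verifying the standard $L^2$ sufficient condition: a continuous local martingale $N_t = \int_0^t H_u \diff W_u$ (resp.\ $\diff B_u$) with $\EE\!\int_0^t H_u^2 \diff u < \infty$ for all $t$ is a true martingale. Here $H_u = \baseS_u F(Y_u, \assetsProcess_u)$, so it suffices to bound $\EE\!\int_0^T \baseS_u^2 F(Y_u,\assetsProcess_u)^2 \diff u$ for every fixed $T$. First I would use the growth bound on $F$: since $\abs{F(y,\theta)} \le C_1 \exp(C_2 y) \vee 1$ on $\RR \times [0,\assetsProcess_{0-}]$ and $\assetsProcess_u \in [0,\assetsProcess_{0-}]$ along any admissible strategy, we have $F(Y_u,\assetsProcess_u)^2 \le C_1^2 \exp(2 C_2 Y_u) \vee 1 \le 1 + C_1^2 \exp(2 C_2 Y_u)$. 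Next, exactly as in the proof of \cref{prop: supermatringale suffices}, the comparison $Y_u \le X_u$ coming from \eqref{eq:Y and X} (with $X$ the uncontrolled Ornstein--Uhlenbeck process $\diff X_t = -\beta X_t \diff t + \impactVolatility \diff B_t$, $X_0 = y$) and monotonicity of $x \mapsto e^{2 C_2 x}$ give $\exp(2 C_2 Y_u) \le \exp(2 C_2 X_u)$, so it remains to control $\EE[\baseS_u^2 \exp(2 C_2 X_u)]$.

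The remaining estimate is a Gaussian moment computation combined with independence/correlation bookkeeping. Both $\baseS_u = \baseS_0 \exp((\mu - \tfrac{\sigma^2}{2}) u + \sigma W_u)$ and $X_u = y e^{-\beta u} + \impactVolatility \int_0^u e^{-\beta(u-s)} \diff B_s$ are (up to deterministic factors) exponentials of jointly Gaussian random variables, since $(W_u, \int_0^u e^{-\beta(u-s)}\diff B_s)$ is a Gaussian vector with variances bounded uniformly on $[0,T]$ and a covariance controlled by $\correlationBW$. Hence $\baseS_u^2 \exp(2 C_2 X_u) = c_u \exp(\text{Gaussian})$ and $\EE[\exp(Z)] = \exp(\EE[Z] + \tfrac12 \Var Z)$ for Gaussian $Z$ yields a finite bound that is continuous, hence bounded, in $u \in [0,T]$. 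Therefore $\sup_{u \le T} \EE[\baseS_u^2 F(Y_u,\assetsProcess_u)^2] < \infty$, and $\EE\!\int_0^T \baseS_u^2 F(Y_u,\assetsProcess_u)^2 \diff u \le T \sup_{u\le T}(\cdots) < \infty$, which is the desired $L^2$ bound. This gives the true martingale property on $[0,T]$ for every $T$, hence on $[0,\infty)$.

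I expect no serious obstacle here; the only point requiring a little care is handling the joint law of $W_u$ and the stochastic integral driving $X_u$ when $\correlationBW \ne 0$ — one should not simply bound the covariance crudely but note that, by Cauchy--Schwarz for the covariance of a Gaussian pair, $\abs{\operatorname{Cov}}$ is dominated by the product of the (uniformly bounded on $[0,T]$) standard deviations, so the exponent in the Gaussian moment formula stays bounded on $[0,T]$ regardless of the sign of $\correlationBW$. Everything else is a routine repetition of the estimates already carried out in the proof of \cref{prop: supermatringale suffices}, so I would keep the write-up short by referring back to that argument for the comparison $Y \le X$ and the Gaussian bound on $\EE[\exp(q C_2 X_T)]$.
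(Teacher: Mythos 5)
Your proposal is correct and follows essentially the same route as the paper: reduce to the $L^2$ criterion $\EE\int_0^T \baseS_u^2 F(Y_u,\assetsProcess_u)^2\diff u<\infty$, use the bound on $F$ together with the comparison $Y_u\le X_u$ (via \eqref{eq:Y and X}) against the uncontrolled Ornstein--Uhlenbeck process, and finish with Gaussian moment estimates. The only (immaterial) difference is the last step: the paper avoids your joint-law bookkeeping for $\correlationBW\ne 0$ by simply applying Cauchy--Schwarz, $\EE[\baseS_u^2 e^{2C_2 X_u}]\le\sqrt{\EE[\baseS_u^4]\,\EE[e^{4C_2 X_u}]}$, with both factors finite and locally bounded in $u$.
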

\begin{proof}
It suffices to check $\EE[\int_0^t \baseS^2_u \exp (2 C_2 Y_u)\diff u] < \infty$ for every $t\geq 0$ by the exponential growth of $F$.
Consider the Ornstein-Uhlenbeck process $X$ given by
\(
	\diff X_t = -\beta X_t \diff t + \impactVolatility \diff B_t,
\) with $ X_0 = y$.
As in the proof of \cref{prop: supermatringale suffices} (see \eqref{eq:Y and X}), we have $Y_t \leq X_t$ for all $t\geq 0$. In particular, 
\begin{align*}
	&\EE\brackets[\Big]{ \int_0^t \baseS^2_u \exp (2 C_2 Y_u)\diff u }
		\leq \EE\brackets[\Big]{ \int_0^t \baseS^2_u \exp (2 C_2 X_u)\diff u }
\\		&= \int_0^t\EE[\baseS^2_u \exp (2 C_2 X_u)] \diff u 
		\leq \int_0^t\sqrt{\EE[\baseS^4_u] \EE[\exp (4 C_2 X_u)]}\diff u
		<\infty,
\end{align*}
using the Cauchy-Schwarz inequality  and the fact that $X$ is a Gaussian process.
\end{proof}

\subsection{Verification and proof of Theorem~\ref{thm:optimal strategy}}

Now we verify that $V$ is a classical solution of the variation inequality~\eqref{eq:Variational} with the  boundary condition $V(y, 0) = 0$ for all $y\in \RR$. That $V(y,0) = 0$ is clear because $M_1(y_0) = 0$. The rest will be split into several lemmas.

\begin{lemma}[Smooth pasting]\label{lemma: smooth paste}
	Let $(y_b, \theta_b)\in \overline\scW \cap \overline\scS$. Then 
	\begin{align}
		\label{eq:smooth pasting 1}
		\Phi(y_b)C'(\theta_b) + \Phi'(y_b)C(\theta_b) &= f(y_b)\,,
	\\
		\label{eq:smooth pasting 2}
		\Phi'(y_b)C'(\theta_b) + \Phi''(y_b)C(\theta_b) &= f'(y_b)\,. 
	\end{align}
\end{lemma}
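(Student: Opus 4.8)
The plan is to derive the two smooth-pasting identities directly from the definition of $C(\theta) := M_1(\boundary(\theta))$ and the explicit characterization of $\boundary$ via the ODE \eqref{eq:boundary}, rather than from any abstract $C^1$-fit argument. Recall $M_1 = (f\Phi' - f'\Phi)/((\Phi')^2 - \Phi\Phi'')$ and $M_2 = (f'\Phi' - f\Phi'')/((\Phi')^2 - \Phi\Phi'')$, where throughout $f, \Phi$ and derivatives are evaluated at $\boundary(\theta_b)$, and $\boundary' = M_2/M_1'$. Since $(y_b,\theta_b) \in \overline\scW \cap \overline\scS$ means $y_b = \boundary(\theta_b)$, it suffices to prove the two identities as functions of a single variable $y = \boundary(\theta)$, treating $C(\theta) = M_1(y)$ and $C'(\theta) = M_1'(y)\,\boundary'(\theta) = M_1'(y)\cdot M_2(y)/M_1'(y) = M_2(y)$. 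So the claims reduce to the purely algebraic identities
\begin{align*}
	\Phi M_2 + \Phi' M_1 &= f\,,
\\
	\Phi' M_2 + \Phi'' M_1 &= f'\,,
\end{align*}
to be verified for all $y \in \RR$ (in fact only on the range $(y_\infty, y_0]$ is needed, but they hold identically).

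First I would verify the first identity: substituting the definitions,
\[
	\Phi M_2 + \Phi' M_1 = \frac{\Phi(f'\Phi' - f\Phi'') + \Phi'(f\Phi' - f'\Phi)}{(\Phi')^2 - \Phi\Phi''} = \frac{f'\Phi\Phi' - f\Phi\Phi'' + f(\Phi')^2 - f'\Phi\Phi'}{(\Phi')^2 - \Phi\Phi''} = \frac{f\paren{(\Phi')^2 - \Phi\Phi''}}{(\Phi')^2 - \Phi\Phi''} = f\,,
\]
using that the two $f'\Phi\Phi'$ terms cancel. This is immediate. For the second identity,
\[
	\Phi' M_2 + \Phi'' M_1 = \frac{\Phi'(f'\Phi' - f\Phi'') + \Phi''(f\Phi' - f'\Phi)}{(\Phi')^2 - \Phi\Phi''} = \frac{f'(\Phi')^2 - f\Phi'\Phi'' + f\Phi'\Phi'' - f'\Phi\Phi''}{(\Phi')^2 - \Phi\Phi''} = \frac{f'\paren{(\Phi')^2 - \Phi\Phi''}}{(\Phi')^2 - \Phi\Phi''} = f'\,,
\]
where now the $f\Phi'\Phi''$ terms cancel. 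Note the denominator $(\Phi')^2 - \Phi\Phi''$ is nonzero — in fact strictly negative by \cref{lemma: Turan like inequality for Phi} — so these manipulations are valid at every point. Finally I would record that $C \in C^1$ on $[0,\infty)$: this follows since $\boundary$ is $C^1$ by \cref{lemma: boundary} and $M_1$ is $C^1$ because $f \in C^3(\RR)$ and the denominator of $M_1$ does not vanish, so $C'(\theta) = M_2(\boundary(\theta))$ is continuous, making the pointwise identities above meaningful at $\theta_b$.

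There is essentially no obstacle here: the only subtlety worth flagging is to make sure the bookkeeping $C'(\theta) = M_1'(\boundary(\theta))\boundary'(\theta) = M_2(\boundary(\theta))$ is performed cleanly using \eqref{eq:boundary} (which is where \cref{lemma: M1 monotonicity} guarantees $M_1' \neq 0$, so the cancellation $M_1'\cdot M_2/M_1' = M_2$ is legitimate), after which both identities collapse to the common-denominator cancellations above. The lemma is thus a direct consequence of the algebraic structure of $M_1, M_2$ and the defining ODE for the free boundary.
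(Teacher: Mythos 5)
Your proposal is correct and follows essentially the same route as the paper: identify $C(\theta_b)=M_1(y_b)$ and, via the ODE \eqref{eq:boundary}, $C'(\theta_b)=M_2(y_b)$, after which both identities reduce to the algebraic cancellations in the definitions \eqref{def: M1 and M2} (the paper merely leaves this algebra implicit, and also notes that at $(y_0,0)$ one takes the right derivative of $C$, which your $C\in C^1([0,\infty))$ remark covers).
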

\begin{proof}
	This follows easily from $C(\theta_b) = M_1(y_b)$ and $C'(\theta_b) = M_2(y_b)$, see the definition of $C$ and \eqref{eq:boundary}, together with the definitions of $M_1$ and $M_2$, see \eqref{def: M1 and M2}. 
	Note that when $(y_b,\theta_b) = (y_0, 0)$ we take the right derivative of $C$ at $0$ and the equalities still hold true.
\end{proof}
\begin{remark}\label{rmk: calculus of variations approach crucial}
It might be interesting to point out that \eqref{eq:smooth pasting 1} and \eqref{eq:smooth pasting 2} are sufficient to derive the boundary between the sell and the wait regions. Indeed, solving \eqref{eq:smooth pasting 1} -- \eqref{eq:smooth pasting 2} with respect to $C(\theta_b)$ and $C'(\theta_b)$, it is easy to see that $C(\theta_b) = M_1(y_b)$ and $C'(\theta_b) = M_2(y_b)$. On the other hand, by the chain rule one gets $\theta'(y_b) C'(\theta_b) = M_1'(y_b)$ and thus we derive for the boundary parametrization $\theta(\cdot) = \boundary^{-1}(\cdot)$ in the appropriate range
\[
	\theta'(y_b) = \frac{M_1'}{M_2}(y_b),
\]
which gives the ODE for the boundary in \eqref{eq:boundary}. To get the initial condition $y_0$, note that the boundary condition $V(\cdot,0) \equiv 0$ gives $C(0) = 0$, i.e.\ $M_1(y_0) = 0$, exactly as in \cref{lemma: boundary}. Thus, one could derive the candidate boundary function $\boundary(\cdot)$ after assuming sufficient smoothness of the function $V$ along the boundary. This is similar to the classical approach in the singular stochastic control literature, cf.~\cite[Section~6]{KaratzasShreve86}. The reason why we chose the seemingly longer derivation via calculus of variation techniques is the local (one-sided) optimality that we derived in \cref{thm: one-sided optimal y} and that will be crucial in our verification of the inequalities of the candidate value function in the sell region, see \cref{lemma:inequality in sell region}.
Even in the special case of $\lambda(\cdot)$ being constant,
 a more direct approach to verify the variational inequality is suggesting new, yet unproven (to our best knowledge), properties for quotients of Hermite functions that might be of independent interest, see \cref{rmk:conj about par cyl funs}.
\end{remark}

The smooth-pasting property translates to smoothness of $V$. Moreover, exponential bound on $V$ and $V_y$ will be needed to rely on the verification results from \cref{subsec: Martingale optimality principle}.

\begin{lemma}\label{lemma:V is smooth}
 The function $V$ is $C^{2,1}(\RR\times [0,\infty))$. Moreover, for every $\assetsProcess_{0-}$ there exist constants $C_1, C_2$, that depend on $\assetsProcess_{0-}$, such that both $V(y,\theta)$ and $V_y(y,\theta)$ are non-negative and bounded from above by $C_1 \exp(C_2 y)\vee 1$ for all $(y,\theta) \in \RR\times [0, \assetsProcess_{0-}]$.
\end{lemma}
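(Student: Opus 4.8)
The plan is to prove the regularity region by region and then to check that $V$, $V_y$, $V_{yy}$ and $V_\theta$ are continuous across the two interfaces $\{y=\boundary(\theta)\}$ (between $\scW$ and $\scS_1$) and $\{y=y_0+\theta\}$ (between $\scS_1$ and $\scS_2$). In the interior of each region $V$ is plainly smoother than $C^{2,1}$: on $\overline\scW$, $\waitV(y,\theta)=\Phi(y)C(\theta)$ with $\Phi\in C^\infty$ and $C=M_1\circ\boundary\in C^2$ (since $f\in C^3$, $\Phi\in C^\infty$, and $\boundary$ solves \eqref{eq:boundary} with a $C^1$ right-hand side $M_2/M_1'$), while on $\scS_1,\scS_2$ the formulas \eqref{def: V difficult sell region}, \eqref{eq:V in scS_2} involve in addition the displacement $\Delta$, which on $\overline\scS_1$ is, by the implicit function theorem applied to $G(\Delta,y,\theta):=\boundary(\theta-\Delta)-(y-\Delta)$ with $G_\Delta=1-\boundary'>0$ (using $\boundary'<0$ from \cref{lemma: boundary}), a $C^2$ function satisfying $\Delta_y=1/(1-\boundary')\in(0,1)$, $\Delta_\theta=-\boundary'/(1-\boundary')$, and hence $\Delta_y+\Delta_\theta=1$; on $\scS_2$ one simply has $\Delta\equiv\theta$.

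For the interface $\{y=\boundary(\theta)\}$ I write $\hat y:=y-\Delta$, $\hat\theta:=\theta-\Delta$, so that $\hat y=\boundary(\hat\theta)$, i.e.\ $(\hat y,\hat\theta)\in\partial\scW$. Differentiating \eqref{def: V difficult sell region}, inserting the smooth-pasting identities \eqref{eq:smooth pasting 1}--\eqref{eq:smooth pasting 2} at $(\hat y,\hat\theta)$ (which hold by \cref{lemma: smooth paste}) and using $\Delta_y+\Delta_\theta=1$, the cancellations should yield, on all of $\overline\scS_1$,
\begin{align*}
	\hardSellV_y(y,\theta) &= \Phi'(\hat y)C(\hat\theta)+f(y)-f(\hat y),\\
	\hardSellV_{yy}(y,\theta) &= \Phi''(\hat y)C(\hat\theta)+f'(y)-f'(\hat y),\\
	\hardSellV_\theta(y,\theta) &= \Phi(\hat y)C'(\hat\theta).
\end{align*}
On $\partial\scW\cap\partial\scS_1$ one has $\Delta=0$, so these reduce to $\Phi'(y)C(\theta)$, $\Phi''(y)C(\theta)$, $\Phi(y)C'(\theta)$, which are exactly $\waitV_y,\waitV_{yy},\waitV_\theta$ from \eqref{def: V wait}; together with continuity of $V$ itself this shows that $V,V_y,V_{yy},V_\theta$ extend continuously across $\{y=\boundary(\theta)\}$. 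Across $\{y=y_0+\theta\}$ I would use that there $\hat\theta\downarrow0$, hence $\hat y\to\boundary(0)=y_0$, and that $C(0)=M_1(y_0)=0$: the three displayed expressions then tend to $f(y)-f(y_0)$, $f'(y)-f'(y_0)$, $\Phi(y_0)C'(0)$, while \eqref{eq:V in scS_2} gives $\easySellV_y=f(y)-f(y-\theta)$, $\easySellV_{yy}=f'(y)-f'(y-\theta)$, $\easySellV_\theta=f(y-\theta)$, and on $\{y=y_0+\theta\}$ these agree since $y-\theta=y_0$ and, by \eqref{eq:smooth pasting 1} at $(y_0,0)$ with $C(0)=0$, $\Phi(y_0)C'(0)=f(y_0)$. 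As $V$ is $C^1$ across each interface and the one-sided limits of $V_{yy}$ and $V_\theta$ coincide there, $V\in C^{2,1}(\RR\times[0,\infty))$; the triple point $(y_0,0)$ is covered by the same identities.

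For the bounds the crucial observation is that $\scW\subseteq\{y\le\boundary(\theta)\le y_0\}$, so the only source of exponential growth is $f$ in the sell region. On $\RR\times[0,\assetsProcess_{0-}]$ both $\waitV=\Phi(y)C(\theta)$ and $\waitV_y=\Phi'(y)C(\theta)$ are non-negative and bounded: $\Phi,\Phi'>0$ are bounded on $(-\infty,y_0]$ (as $\Phi$ is positive increasing and $\Phi''>0$ by \cref{lemma: Turan like inequality for Phi}), and $C=M_1\circ\boundary$ is continuous, non-negative and bounded on $\boundary([0,\assetsProcess_{0-}])\subseteq[\boundary(\assetsProcess_{0-}),y_0]$ because $M_1\ge0$ on $(-\infty,y_0]$ (since $M_1(y_0)=0$ and $M_1'<0$ by \cref{lemma: M1 monotonicity}). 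On $\overline\scS_1\cup\overline\scS_2$ one has $0\le\Delta\le\theta\le\assetsProcess_{0-}$ and $\hat y=\boundary(\hat\theta)\in[\boundary(\assetsProcess_{0-}),y_0]$, so $\Phi(\hat y)C(\hat\theta),\Phi'(\hat y)C(\hat\theta)\ge0$ are bounded; moreover \cref{ass:lambda is bounded} gives $\log f(y)-\log f(0)=\int_0^y (f'/f)(x)\diff x\le\lambda_{\max}y^{+}$, hence $0<f(y)\le f(0)\paren{1\vee e^{\lambda_{\max}y}}$, so $0\le\int_{\hat y}^y f\le\assetsProcess_{0-}f(0)\paren{1\vee e^{\lambda_{\max}y}}$ and $0\le f(y)-f(\hat y)\le f(y)$. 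Feeding these into \eqref{def: V difficult sell region}, \eqref{eq:V in scS_2} and the formulas above for $\hardSellV_y,\easySellV_y$ produces constants $C_1=C_1(\assetsProcess_{0-})$ and $C_2=\lambda_{\max}$ with $0\le V,V_y\le C_1 e^{C_2 y}\vee 1$ on all of $\RR\times[0,\assetsProcess_{0-}]$.

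The main obstacle is the interface computation across $\{y=\boundary(\theta)\}$: one must implicitly differentiate $\Delta$ and organise the substitutions of \eqref{eq:smooth pasting 1}--\eqref{eq:smooth pasting 2} so that the cancellations yielding the three compact formulas for $\hardSellV_y,\hardSellV_{yy},\hardSellV_\theta$ become transparent. Once those are available, the $C^{2,1}$ claim and the exponential bounds follow with little additional effort — the bounds being easy precisely because the wait region is confined to $\{y\le y_0\}$, so only $f$ contributes the exponential growth.
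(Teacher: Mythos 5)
Your proposal is correct and follows essentially the same route as the paper's proof: differentiate the regional formulas, use the smooth-pasting identities of \cref{lemma: smooth paste} to obtain the compact expressions for $\hardSellV_y$, $\hardSellV_{yy}$, $\hardSellV_\theta$ on $\overline\scS_1$, match one-sided limits across both interfaces (using $\Delta=0$ on $\overline\scW\cap\overline\scS_1$ and $C(0)=0$ on $\{y=y_0+\theta\}$), and get the exponential bound from \cref{ass:lambda is bounded} together with the wait region being confined to $\{y\le y_0\}$. The only immaterial difference is that you bound $V$ and $V_y$ directly from the explicit regional formulas, whereas the paper deduces $0<V_y<f$ from $f=V_y+V_\theta$ and $V_\theta>0$ in the sell region and then integrates in $y$.
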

\begin{proof}
 In $\scW$, the function $V$ is already $C^{2,1}$ by construction and the fact that $C(\theta) = M_1(\boundary(\theta))$ is continuously differentiable since $\boundary(\cdot)$ and $M_1(\cdot)$ are so.

 For $(y, \theta) \in \scS_1$, set $(y_b, \theta_b) := (y-\Delta(y,\theta), \theta-\Delta(y,\theta))$ and $\Delta := \Delta(y,\theta) $ (recall \eqref{eq:Delta and boundary}). 
 We have by \eqref{def: V difficult sell region} for the first and  \eqref{eq:smooth pasting 1} for the second equality 
\begin{align}
	\hardSellV_y 
		&= \Phi'(y_b)C(\theta_b) \, (1-\Delta_y) + \Phi(y_b)C'(\theta_b) \, (-\Delta_y) + f(y) - f(y_b) \, (1-\Delta_y) \notag
\\		
		&= \Phi'(y - \Delta)C(\theta - \Delta) + f(y) - f(y - \Delta). \label{eq:V_y in sell region}
\end{align}
Since $f$, $\Delta$, $C$ and $\Phi'$ are continuously differentiable, $V_y$ will also be so. 
Hence by  \eqref{eq:smooth pasting 2},
\begin{align}
	\hardSellV_{yy} 
		&=\Phi''(y_b)C(\theta_b) \, (1-\Delta_y) + \Phi'(y_b)C'(\theta_b) \, (-\Delta_y) + f'(y) - f'(y_b) \, (1 - \Delta_y)\notag
\\ 
		&= \waitV_{yy}(y_b,\theta_b) + f'(y) - f'(y_b),\label{eq:V_yy in sell region}
\end{align}
which is continuous.
On the other hand, by \eqref{def: V difficult sell region} and \eqref{eq:smooth pasting 2} we have
\begin{align}
	\hardSellV_\theta(y, \theta) 
		&= \Phi'(y_b)C(\theta_b)(-\Delta_\theta) + \Phi(y_b)C'(\theta_b)(1-\Delta_\theta) -f(y_b)(-\Delta_\theta)\notag
\\
		&= \Phi(y_b)C'(\theta_b), \label{eq:V_theta in sell region}
\end{align}
which is continuous.
For  $(y, \theta)\in \overline\scW\cap \overline\scS$ on the boundary,  
the left derivative w.r.t.~$y$ is 
\[
	\lim_{x\searrow 0} \frac 1 x \big({V(y,\theta) - V(y-x, \theta)}\big) = \Phi(y)C(\theta),
\]
while the right derivative is again given by \eqref{eq:V_y in sell region} and is equal to the left derivative since $\Delta(y,\theta) = 0$ in this case. 
Hence, $V$ is continuously differentiable w.r.t.~$y$ on the boundary with derivative $V_y(y, \theta) = \Phi'(y)C(\theta)$. 
Similarly, the left derivative of $V_y$ on the boundary is $\Phi''(y)C(\theta)$ and is equal to the right derivative which is given by  \eqref{eq:V_yy in sell region} with $y = y_b$. 
The left derivative of $V$ w.r.t.~$\theta$ on the boundary is equal to the right derivative (given by \eqref{eq:V_theta in sell region}). 
Therefore, $V$ is $C^{2,1}$ inside $\overline\scW\cup \scS_1$.
 
For $(y,\theta) \in \scS_2$, we have that $V^{\scS_2}_y = f(y) - f(y-\theta)$, $V^{\scS_2}_{yy} = f'(y) - f'(y-\theta)$ and $V^{\scS_2}_\theta =  f(y-\theta)$ by \eqref{eq:V in scS_2}, which are all continuous.
On the boundary between $\scS_1$ and $\scS_2$, the left derivative of $V$ w.r.t.~$y$ is given by $\eqref{eq:V_y in sell region}$ while the right derivative is $f(y) - f(y_0)$. Since $\theta-\Delta = 0$ in this case and $C(0) = 0$, they are equal and hence $V$ is continuously differentiable w.r.t.~$y$ there; similarly for $V_{yy}$. The left derivative of $V$ w.r.t.~$\theta$ there is given by \eqref{eq:V_theta in sell region} with $(y_b, \theta_b) = (y_0, 0)$. The right derivative w.r.t.~$\theta$ is $f(y-\theta) = f(y_0)$. They are equal by \eqref{eq:smooth pasting 2} and $C(0) = 0$. Therefore, $V$ is $C^{2,1}$ on $\overline\scS_1\cup \scS_2$. 
It remains to check smoothness on $\{(y,0) : y\in \RR\}$. The derivatives w.r.t.~$y$ there are 0. $V$ is continuously differentiable w.r.t.~$\theta$ in this case because $\boundary(\cdot)$, $C$,  and $\Delta$ are continuously differentiable w.r.t.~$\theta$ also at $\theta = 0$ (we consider the right derivatives in this case).  

To conclude the proof, the bound of $V$ and $V_y$ can be argued as follows. 
In the wait region, which is contained in $(-\infty, y_0]\times [0, \infty)$, we have $V(y, \theta) = C(\theta)\Phi(y)$ and  $V_y(y, \theta) = C(\theta)\Phi'(y)$. Since $\Phi,\Phi'$ are strictly increasing in $y$ (see \eqref{eq:Phi via H} and \cite[Chapter~10]{Lebedev1972special} for properties of the Hermite functions), $V$ and $V_y$ will be bounded by a constant there. 
Now, in the sell region we have $f - V_y - V_\theta = 0$. However,  $V_\theta > 0$ because in $\scS_1$ \eqref{eq:V_theta in sell region} holds and $C'(\theta_b) = M_2(\boundary(\theta_b)) > 0$, while in $\scS_2$ we have that $V_\theta(y,\theta) = f(y-\theta) > 0$. 
Similarly, $V_y > 0$ in the sell region.
Therefore, $0< V_y(y, \theta) < f(y) \leq \exp(\lambda_\infty y) \vee 1$ by \cref{ass:lambda is bounded}. 
Hence, integrating in $y$ gives $V(y,\theta) \le V(0,\theta) + \exp(\lambda_\infty y) / \lambda_\infty$ for $y \ge 0$, which implies $V(y,\theta) \le C_1 \exp(C_2 y) \vee 1$ for appropriate constants $C_1, C_2$.
\end{proof}

Next we prove that $V$ solves the variational inequality \eqref{eq:Variational}.
\begin{lemma}\label{lemma: wait inequality}
	The function $\waitV: \overline\scW \to [0,\infty)$ from \eqref{def: V wait} satisfies
	\[
		\scL\waitV(y,\theta) = 0%
\quad \text{and} \quad%
		f(y) < \waitV_y(y,\theta) + \waitV_\theta(y,\theta)
\text{ for $y < \boundary(\theta)$. }
	\]
\end{lemma}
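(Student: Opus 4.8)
The plan is to handle the two assertions in turn. The identity $\scL\waitV(y,\theta)=0$ is immediate from \eqref{def: V wait}: since $\waitV(y,\theta)=\Phi(y)C(\theta)$ and $\scL$ differentiates only in $y$, one gets $\scL\waitV(y,\theta)=C(\theta)\bigl(\tfrac{\impactVolatility^2}{2}\Phi''+(\sigma\correlationBW\impactVolatility-\beta y)\Phi'-\delta\Phi\bigr)(y)$, which vanishes because $\Phi=\Phi_\delta$ solves exactly this ODE (the line after \cref{ass:modelpara}).

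For the strict inequality I would fix $\theta>0$, abbreviate $\bar y:=\boundary(\theta)\in(y_\infty,y_0)$ (\cref{lemma: boundary}), and set
\[
  g(y):=\waitV_y(y,\theta)+\waitV_\theta(y,\theta)-f(y)=M_1(\bar y)\,\Phi'(y)+M_2(\bar y)\,\Phi(y)-f(y),
\]
a $C^2$ function of $y\in\RR$ with fixed coefficients $M_1(\bar y)>0$ (since $M_1(y_0)=0$ and $M_1'<0$ by \cref{lemma: M1 monotonicity}) and $M_2(\bar y)>0$ (by \eqref{eq: simplified Euler Lagrange} and \cref{lemma: Turan like inequality for Phi}); the goal is $g>0$ on $(-\infty,\bar y)$. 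The smooth-pasting relations \eqref{eq:smooth pasting 1}--\eqref{eq:smooth pasting 2} say precisely $g(\bar y)=0$ and $g'(\bar y)=0$. Differentiating the ODE for $\Phi$ yields $\scL\Phi'=\beta\Phi'$, and the definition of $k$ in \cref{ass:decreasing k} yields $\scL f=(k+\beta)f$; hence
\[
  \scL g(y)=\beta\,M_1(\bar y)\,\Phi'(y)-\bigl(k(y)+\beta\bigr)f(y)=:S(y).
\]

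Next I would substitute $q:=g/\Phi$ (legitimate, $\Phi>0$): because $\scL\Phi=0$, the zeroth-order term drops and $S=\scL g=\tfrac{\impactVolatility^2}{2}\Phi q''+\bigl(\impactVolatility^2\Phi'+(\sigma\correlationBW\impactVolatility-\beta y)\Phi\bigr)q'$, so $q'$ solves a first-order linear ODE with $q(\bar y)=q'(\bar y)=0$. Multiplying by its strictly positive integrating factor and integrating from $\bar y$ down to $y$ shows that $q'(y)$ has the sign of $-\int_y^{\bar y}(\text{positive weight})\,S(s)\,ds$. Consequently, once one knows that $S>0$ on $(-\infty,\bar y)$, it follows that $q'<0$ there, so $q$ is strictly decreasing and $q>q(\bar y)=0$ on $(-\infty,\bar y)$, i.e. $g=\Phi q>0$, which is the claim. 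So everything reduces to $S(y)=\beta M_1(\bar y)\Phi'(y)-(k(y)+\beta)f(y)>0$ for $y<\bar y$.

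Where $k(y)+\beta\le0$ this is trivial, as then $S(y)=\beta M_1(\bar y)\Phi'(y)+\bigl(-(k(y)+\beta)\bigr)f(y)>0$ by $M_1(\bar y),\Phi',f>0$. I expect the real obstacle to be the complementary region $k(y)+\beta>0$, which by strict monotonicity of $k$ (\cref{ass:decreasing k}) and the fact that $k(y_0)<-\beta$ — obtained by inserting the ODE for $\Phi$ at $y_0$ into the definition of $k$, reducing it to $\tfrac{\impactVolatility^2}{2}\bigl((f''/f)-(\Phi''/\Phi)\bigr)(y_0)-\beta$ and invoking \cref{ass:uniqueness of y0} — lies strictly below $y_0$. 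Here I would use the explicit form $M_1(\bar y)=(f\Phi'-f'\Phi)/((\Phi')^2-\Phi\Phi'')(\bar y)$, the Turán inequalities of \cref{lemma: Turan like inequality for Phi}, the analogous identity $k(y_\infty)=\tfrac{\impactVolatility^2}{2}\bigl((f''/f)-(\Phi'''/\Phi')\bigr)(y_\infty)<0$ (from the ODE for $\Phi'$ at $y_\infty$ and \cref{ass:uniqueness of yinf}), and the sign of $\lambda-\Phi''/\Phi'$ (positive below $y_\infty$, again \cref{ass:uniqueness of yinf}), to bound $\beta M_1(\bar y)\Phi'(y)$ from below by $(k(y)+\beta)f(y)$; concretely, one rules out an interior minimiser $y_m<\bar y$ of $S$ with $S(y_m)\le0$, noting that there $S'(y_m)=0$, which with $k'<0$ and $\Phi''>0$ forces $k(y_m)+\beta>0$, and then substituting $S'(y_m)=0$ back into $S(y_m)$ and using the monotonicity facts keeps $S(y_m)>0$. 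Making this last estimate clean — controlling the sign of $\scL g$ in the far-left part of the wait region — is the delicate point and is precisely where \cref{ass:uniqueness of yinf} and \cref{ass:decreasing k} are genuinely used; the rest is routine ODE bookkeeping.
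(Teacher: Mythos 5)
Your opening moves are sound: $\scL\waitV=0$ is immediate from $\waitV=\Phi\,C$, the smooth-pasting relations give $g(\bar y)=g'(\bar y)=0$ for $g:=\waitV_y+\waitV_\theta-f$ and $\bar y:=\boundary(\theta)$, the identities $\scL\Phi'=\beta\Phi'$ and $\scL f=(k+\beta)f$ are correct, and the integrating-factor computation for $q:=g/\Phi$ correctly shows that $g>0$ on $(-\infty,\bar y)$ \emph{would} follow from $S(y):=\scL g(y)=\beta M_1(\bar y)\Phi'(y)-\bigl(k(y)+\beta\bigr)f(y)>0$ on $(-\infty,\bar y)$. The genuine gap is that this sufficient condition is strictly stronger than the lemma and is false in general, so the step you yourself flag as ``delicate'' cannot be completed as sketched. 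Take $f(y)=e^{\lambda y}$ (which satisfies \cref{ass:modelpara}) and $\theta$ small: then $\bar y=\boundary(\theta)\to y_0$ and $M_1(\bar y)\to M_1(y_0)=0$, while $k(y)+\beta=\tfrac{\impactVolatility^2}{2}\lambda^2-\delta+(\sigma\correlationBW\impactVolatility-\beta y)\lambda\to+\infty$ as $y\to-\infty$. Fixing any $y^*$ far enough to the left that $k(y^*)+\beta>0$, one gets $S(y^*)=\beta M_1(\bar y)\Phi'(y^*)-\bigl(k(y^*)+\beta\bigr)f(y^*)<0$ for all sufficiently small $\theta>0$, although $y^*<\bar y$. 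Moreover, your device for excluding an interior minimiser points the wrong way: if $S'(y_m)=0$, then
\[
	f'(y_m)\,S(y_m)=\beta M_1(\bar y)\,\bigl(f'\Phi'-f\Phi''\bigr)(y_m)+k'(y_m)f(y_m)^2,
\]
and for $y_m\in(y_\infty,\bar y)$ both summands are negative ($f'\Phi'-f\Phi''<0$ there by \cref{ass:uniqueness of yinf,ass:existence of y0 and yinf}, and $k'<0$ by \cref{ass:decreasing k}), so a critical point forces $S(y_m)<0$, not $S(y_m)>0$ as you assert. What is true, and all that your ODE argument actually needs, is the \emph{integrated} positivity $\int_y^{\bar y}(\text{positive weight})\,S\,ds>0$ for every $y<\bar y$, i.e.\ $q'<0$; but that is not reachable by the pointwise sign analysis you propose.

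For comparison, the paper obtains $q'<0$ directly and without any information on $\scL g$: dividing by $\Phi$ and using $(f/\Phi)'=M_1\cdot(\Phi'/\Phi)'$, it writes
\[
	\frac{\diff}{\diff y}\,\frac{f(y)-\waitV_y(y,\theta)-\waitV_\theta(y,\theta)}{\Phi(y)}
	=\bigl(M_1(y)-M_1(\bar y)\bigr)\Bigl(\frac{\Phi'}{\Phi}\Bigr)'(y)>0
	\quad\text{for } y<\bar y,
\]
by \cref{lemma: M1 monotonicity} ($M_1'<0$) and \cref{lemma: Turan like inequality for Phi} ($(\Phi'/\Phi)'>0$); since this quotient vanishes at $y=\bar y$ by \eqref{eq:smooth pasting 1}, it is negative on $(-\infty,\bar y)$, which is the claim. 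Note that this quotient is exactly $-q$, so the paper's one-line monotonicity argument is precisely the statement $q'<0$ that your approach tries to reach through the (false) pointwise positivity of $S$; if you want to keep your framework, replace the analysis of $S$ by this direct computation.
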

\begin{proof}
	By \eqref{eq: y' with M1 and M2}, we have $\waitV_\theta = \Phi(y) M_1'(\boundary(\theta)) \boundary'(\theta) = \Phi(y) M_2(\boundary(\theta))$ and $\waitV_y = \Phi'(y) M_1(\boundary(\theta))$.
	Recall that at $y=\boundary(\theta)$ we have by \eqref{eq:smooth pasting 1} the equality $\waitV_y + \waitV_\theta = f(\boundary(\theta))$.
	Now consider $y<\boundary(\theta)$. By \cref{lemma: M1 monotonicity}, we then have $M_1(y) > M_1(\boundary(\theta))$ giving
	\[
		\paren[\Big]{\frac{f}{\Phi}}'(y) 
		> \paren[\Big]{\frac{\Phi'}{\Phi}}'(y) M_1\paren[\big]{ \boundary(\theta) } 
			= \frac{\diff}{\diff y} \paren[\bigg]{ M_1\paren[\big]{ \boundary(\theta) } \frac{\Phi'(y)}{\Phi(y)} + M_2\paren[\big]{ \boundary(\theta) } }.
	\]
	Therefore, $y\mapsto \paren{ f - \waitV_y(y,\theta) + \waitV_\theta(y,\theta) }/\Phi(y) $ is increasing in $y$.  
	Since at $y=\boundary(\theta)$ it equals to 0, we get the claimed inequality.
\end{proof}

It remains to verify the inequality in the sell region. The proof is more subtle and that is where \cref{thm: one-sided optimal y} plays a crucial role.
Recall \cref{ass:modelpara} and note that $y_\infty$ from \cref{lemma: boundary} is unique by
condition~\ref{ass:uniqueness of yinf}.
\begin{lemma}\label{lemma:inequality in sell region}
	The functions $\hardSellV$ and $\easySellV$ satisfy on $\overline\scS_1$ and $\scS_2$ respectively
	\[
		\scL \hardSellV \le 0, \quad \scL \easySellV < 0.
	\] 
	Moreover, the inequality inside $\overline\scS_1$ is strict except on the boundary between the wait region and the sell region ($\overline \scW\cap \overline \scS_1$) where we have equality.
\end{lemma}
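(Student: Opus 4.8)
\emph{Proof plan.} The idea is to reduce each of the two differential inequalities to a one-variable inequality along the characteristic lines $\{y-\theta=\mathrm{const}\}$ of the first-order part of $\scL$, to control the sign of $k$ on the sell region, and --- for the delicate inequality near $\partial\scW$ --- to invoke the one-sided local optimality of \cref{thm: one-sided optimal y}. First I would record the sign of $k$: since $f'/f=\Phi'/\Phi$ at $y_0$ and $f'/f=\Phi''/\Phi'$ at $y_\infty$ (\cref{ass:existence of y0 and yinf}), subtracting logarithmic derivatives and using \cref{ass:uniqueness of y0,ass:uniqueness of yinf} gives $f''/f<\Phi''/\Phi$ at $y_0$ and $f''/f<\Phi'''/\Phi'$ at $y_\infty$; inserting the ODEs satisfied by $\Phi=\Phi_\delta$ and by $\Phi'$ (which solves the $\Phi_{\delta+\beta}$-ODE, cf.\ the proof of \cref{lemma: Turan like inequality for Phi}) then yields $k(y_0)=\tfrac{\impactVolatility^2}{2}(f''/f-\Phi''/\Phi)(y_0)-\beta<-\beta$ and $k(y_\infty)=\tfrac{\impactVolatility^2}{2}(f''/f-\Phi'''/\Phi')(y_\infty)<0$. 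As $k$ is strictly decreasing by \cref{ass:decreasing k}, it follows that $k<0$ on $(y_\infty,\infty)$ and $k\le-\beta$ on $[y_0,\infty)$; since $\scS_1\subset\{y>\boundary(\theta)>y_\infty\}$ and $\scS_2\subset\{y>y_0\}$ by \cref{lemma: boundary}, $k$ is strictly negative throughout the sell region.

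Next, the characteristic reduction. Because $\scL$ has coefficients depending only on $y$, one has $\partial_y\scL\varphi+\partial_\theta\scL\varphi=\scL(\varphi_y+\varphi_\theta)-\beta\varphi_y$; hence for any sufficiently smooth $V$ with $V_y+V_\theta=f$ on the sell region, the function $s\mapsto\scL V(c+s,s)$ satisfies
\[
	\tfrac{\diff}{\diff s}\bigl[\scL V(c+s,s)\bigr]=\scL[f](c+s)-\beta\,V_y(c+s,s),\qquad \scL[f](y):=\tfrac{\impactVolatility^2}{2}f''(y)+(\sigma\correlationBW\impactVolatility-\beta y)f'(y)-\delta f(y)=f(y)\bigl(k(y)+\beta\bigr).
\]
For $\easySellV$ on $\scS_2$, \eqref{eq:V in scS_2} gives $\easySellV_y=f(y)-f(y-\theta)$, so along $y-\theta=c\ge y_0$ the right-hand side equals $f(c+s)k(c+s)+\beta f(c)\le-\beta f(c+s)+\beta f(c)<0$ for $s>0$ (using $k\le-\beta$ on $[y_0,\infty)$ and monotonicity of $f$); since $\easySellV(\cdot,0)=0$ forces $\scL\easySellV(c,0)=0$, integrating in $s$ gives $\scL\easySellV<0$ on $\scS_2$.

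For $\hardSellV$, let $(y_b,\theta_b)$ be the foot point on $\partial\scW$ of the characteristic through $(y,\theta)\in\overline\scS_1$; it is invariant along the characteristic because $\boundary(\theta_b)-\theta_b=y-\theta$ and $z\mapsto\boundary(z)-z$ is strictly monotone. Then \eqref{eq:V_y in sell region} reads $\hardSellV_y(c+s,s)=\Phi'(y_b)C(\theta_b)+f(c+s)-f(y_b)$ along the characteristic, and \eqref{eq:smooth pasting 1} (so $f(y_b)-\Phi'(y_b)C(\theta_b)=\Phi(y_b)C'(\theta_b)$ with $C'(\theta_b)=M_2(y_b)>0$) gives
\[
	\psi(s):=\tfrac{\diff}{\diff s}\bigl[\scL\hardSellV(c+s,s)\bigr]=f(c+s)k(c+s)+\beta\,\Phi(y_b)M_2(y_b).
\]
Since $c+s\ge y_b>y_\infty$ on $\overline\scS_1$, the first step gives $k<0$ there, so $\psi'=f'k+fk'<0$ ($f,f'>0$, $k'<0$), i.e.\ $\psi$ is strictly decreasing; and $\scL\hardSellV$ is continuous by \cref{lemma:V is smooth} and equals $\scL\waitV\equiv0$ on $\overline\scW$, hence $\scL\hardSellV(y_b,\theta_b)=0$. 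Thus it remains only to show $\psi(\theta_b)\le0$ for every foot point: granting that, $\scL\hardSellV(c+s,s)=\int_{\theta_b}^s\psi<0$ for $s>\theta_b$ with equality exactly at $s=\theta_b$, which is the claim on $\overline\scS_1$.

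Finally, $\psi(\theta_b)\le0$ is where \cref{thm: one-sided optimal y} enters. For $\theta_b=0$ (so $y_b=y_0$), \eqref{eq:smooth pasting 1} with $C(0)=M_1(y_0)=0$ gives $\Phi(y_0)M_2(y_0)=f(y_0)$, whence $\psi(0)=f(y_0)(k(y_0)+\beta)<0$. For $\theta_b>0$ I would argue by contradiction: if $\psi(\theta_b)>0$, then by continuity of $\psi$ (in the foot point) and of $\scL\hardSellV$, which vanishes on $\partial\scW$, there is a thin strip against $\partial\scW$, with asset coordinates in an interval $(\theta_1,\theta_2)\subset(0,\infty)$ around $\theta_b$, on which $\scL\hardSellV\ge0$ and $>0$ off $\partial\scW$; as $\boundary(\theta_b)<y_0$ this strip can be taken to be $B:=\{(y,\theta'):\theta'\in(\theta_1,\theta_2),\ \boundary(\theta')\le y\le\tilde\boundary(\theta')\}$ for a smooth decreasing $\tilde\boundary$ with $\boundary\le\tilde\boundary\le y_0$, $\tilde\boundary=\boundary$ off $(\theta_1,\theta_2)$, and $\norm{\boundary-\tilde\boundary}_{W^{1,\infty}}$ as small as needed. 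Fixing $\theta>\theta_2$ and applying It\^o to $e^{-\gamma t}\baseS_tV(Y_t,\assetsProcess_t)$ along the state process of $\reflectionStrategy{\tilde\boundary}{\theta}$ started at $(\boundary(\theta),\theta)$ --- the local-martingale parts being true martingales by (the proof of) \cref{lemma: true martingales} and the exponential bound of \cref{lemma:V is smooth}, the $\diff A$-term vanishing since $f=\hardSellV_y+\hardSellV_\theta$ on the reflection boundary, and the terminal term vanishing in $L^1$ as in \cref{prop: supermatringale suffices} --- yields
\[
	\EE\bigl[\proceedsProcess_\infty(\reflectionStrategy{\tilde\boundary}{\theta})\bigr]=\baseS_0\bdryV(\theta)+\EE\Bigl[\int_0^\infty e^{-\gamma t}\baseS_t\,\scL\hardSellV(Y_t,\assetsProcess_t)\,\indicator_{\{(Y_t,\assetsProcess_t)\in B\}}\diff t\Bigr],
\]
using $\scL\waitV\equiv0$ and $\baseS_0\bdryV(\theta)=\EE[\proceedsProcess_\infty(\reflectionStrategy{\boundary}{\theta})]$ from \cref{lemma: optimal J value}. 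Since the reflected diffusion accumulates Lebesgue-positive occupation time in $B\setminus\partial\scW$ (it reflects at $\partial\tilde\boundary$, which lies in $B\setminus\partial\scW$ for $\theta'\in(\theta_1,\theta_2)$) and $\scL\hardSellV\ge0$ on $B$, the integral on the right has strictly positive expectation, so $\EE[\proceedsProcess_\infty(\reflectionStrategy{\tilde\boundary}{\theta})]>\EE[\proceedsProcess_\infty(\reflectionStrategy{\boundary}{\theta})]$, contradicting \cref{thm: one-sided optimal y}. Hence $\psi(\theta_b)\le0$ for all foot points, which finishes the proof. The main obstacle is this last step: making the It\^o/verification identity for the $\tilde\boundary$-reflected process fully rigorous (integrability, $L^1$-convergence of the terminal term, finite liquidation time) and arranging the perturbation $\tilde\boundary$ so that all hypotheses of \cref{thm: one-sided optimal y} hold while $B\subset\{\scL\hardSellV\ge0\}$ and the reflected diffusion genuinely spends time strictly inside $B$.
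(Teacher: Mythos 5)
Your proposal is correct and follows essentially the same route as the paper: reduce $\scL V$ along the diagonals $\{y-\theta=\mathrm{const}\}$ to the sign of $f\,k$ plus a constant determined by smooth pasting at the foot point, establish $k<0$ on $[y_\infty,\infty)$ from \cref{ass:uniqueness of yinf,ass:decreasing k}, and exclude a positive derivative at the foot point by contradiction with \cref{thm: one-sided optimal y} through an It\^o comparison of the $\boundary$- and $\tilde\boundary$-reflected strategies. Your explicit treatment of the foot point $\theta_b=0$ (via $k(y_0)<-\beta$ from \cref{ass:uniqueness of y0}) and the retained $\beta f(c)$ term in the $\scS_2$ computation are slightly more careful than the paper's write-up, but the substance is the same.
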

\begin{proof}
First consider region $\overline\scS_1$. Recall from
 \cref{lemma:V is smooth} 
(see \eqref{eq:V_y in sell region} -- \eqref{eq:V_yy in sell region}) that in this case
\begin{align}
	\hardSellV_y(y,\theta) &= \waitV_y(y - \Delta, \theta - \Delta) + f(y) - f(y - \Delta),\notag
\\	\hardSellV_{yy}(y,\theta) &= \waitV_{yy}(y_b,\theta_b) + f'(y) - f'(y_b),\notag
\end{align}
where $y = y_b + \Delta(y,\theta)$ and $\theta = \theta_b + \Delta(y, \theta)$.
Fix $(y_b,\theta_b)\in \overline\scW\cap \overline\scS_1$ and consider  the perturbation $\Delta \mapsto (y,\theta) = (y_b + \Delta, \theta_b + \Delta)$.
Set
\begin{align*}
	h(\Delta)& :={} \scL \hardSellV(y_b + \Delta, \theta_b + \Delta)
\\
\begin{split}
		= {}&\tfrac{\impactVolatility^2}{2} \waitV_{yy}(y_b, \theta_b) 
			- \tfrac{\impactVolatility^2}{2} f'(y_b) 
			+ \sigma \correlationBW \impactVolatility \waitV_y(y_b, \theta_b) 
			- \sigma \correlationBW \impactVolatility f(y_b)
			- \delta \waitV(y_b, \theta_b)
\\		&+ \tfrac{\impactVolatility^2}{2} f'(y) 
			- \beta y \waitV_y(y_b, \theta_b) 
			+ \beta y f(y_b)
			+ (\sigma \correlationBW \impactVolatility - \beta y) f(y)
			- \delta \int_{y_b}^y f(x) \diff x \,.
\end{split}
\end{align*}
Note that $h(0) = 0$ by \cref{lemma: wait inequality} and to show $h(\Delta) < 0$ for $\Delta > 0$, it suffices to prove $h'(\Delta) < 0$ for all $\Delta > 0$.
We have for all $\Delta \geq 0$ at $y=y_b + \Delta$ that
\begin{align*}
\begin{split}
	h'(\Delta) &= \beta \paren[\big]{ f(y_b) - \waitV_y(y_b,\theta_b) } 
\\	&\qquad
		+ f(y) \paren[\bigg]{ \underbrace{ \frac{\impactVolatility^2}{2} \frac{f''(y)}{f(y)} - (\beta + \delta) + (\sigma \correlationBW \impactVolatility - \beta y) \frac{f'(y)}{f(y)} }_{= k(y)} } ,
\end{split}
\end{align*}
where at $\Delta = 0$ we consider the right derivative $h'(0+)$.
Now we show that $k(y) < 0$ for all $y\geq y_{\infty}$. 
To this end, recall that $\Phi$ is a solution of the ODE $\delta \Phi(x) = \frac{\impactVolatility^2}{2} \Phi''(x) + (\sigma \correlationBW \impactVolatility - \beta x) \Phi'(x)$.
Differentiating w.r.t.~$x$ and dividing by $\Phi'(x)$ yields
\begin{equation*}
	0 = \frac{\impactVolatility^2}{2} \paren*{ \frac{\Phi''(x)}{\Phi'(x)} }' + \frac{\impactVolatility^2}{2} \frac{\Phi''(x)^2}{\Phi'(x)^2} - (\beta + \delta) + (\sigma \correlationBW \impactVolatility - \beta x) \frac{\Phi''(x)}{\Phi'(x)}
\end{equation*}
So at the left end $y_\infty$ of our boundary, we have
\begin{align}
	k(y_\infty) &= \frac{\impactVolatility^2}{2} \paren[\bigg]{\frac{f'}{f}}'(y_\infty) + \frac{\impactVolatility^2}{2} \frac{\Phi''(y_\infty)^2}{\Phi'(y_\infty)^2} - (\beta + \delta) + (\sigma \correlationBW \impactVolatility - \beta y_\infty) \frac{\Phi''(y_\infty)}{\Phi'(y_\infty)} \notag
\\ \label{ineq:negative k in sell region}
		&= \frac{\impactVolatility^2}{2} \paren[\bigg]{\frac{f'}{f}}'(y_\infty) - \frac{\impactVolatility^2}{2} \paren[\bigg]{\frac{\Phi''}{\Phi'}}'(y_\infty) < 0
\end{align}
by \cref{ass:uniqueness of yinf}.
With \cref{ass:decreasing k} we get $k(y)< 0$ for every $y\geq y_\infty$.

In particular, $k(y_b + \Delta) < 0$ for all $\Delta \ge 0$. 
Since $f$ is positive and increasing, the product $\Delta \mapsto (fk)(y_b + \Delta)$ is decreasing.
Therefore, proving $h'(0+) \le 0$ is sufficient to show the inequality in $\scS_1$.
To stress the dependence of $h$ on the point $(y_b,\theta_b) = (\boundary(\theta_b),\theta_b)$, we also write $h(\Delta) = h_{\theta_b}(\Delta)$.
Note that $h_\theta(\Delta)$ is continuous in $\theta$ and $\Delta$ on $[0,\infty)\times [0,\infty)$.

Assume $h_{\theta_b}'(0+) > 0$ at some boundary point $(y_b,\theta_b)$ with $\theta_b > 0$. 
By continuity of $h'$ on $\theta$ and $\Delta$ there exists some $\varepsilon > 0$ such that $\scL \hardSellV > 0$ on $U:= \overline\scS_1 \cap B_\varepsilon(y_b,\theta_b)$. 
This will lead to a contradiction to the fact that the candidate boundary is a (one-sided) strict local maximizer of our stochastic optimization problem with strategies described by the local times of reflected diffusions, see \cref{thm: one-sided optimal y}. 

Indeed, fix $\assetsProcess_0 > \theta_b + \varepsilon$ and consider a perturbation $\tilde \boundary(\cdot) \in C^1$ of the boundary $\boundary(\cdot)$ which satisfies the conditions of \cref{thm: one-sided optimal y} and $\boundary(\theta) < \tilde \boundary(\theta) \le y_0$ in $(\tilde \boundary(\theta), \theta) \in U$ and such that $\tilde \boundary$ and $\boundary$ coincide outside of $U$. 
For the corresponding reflection strategies $\tilde A:= \reflectionStrategy{\tilde \boundary}{\assetsProcess_0}$ and $A:= \reflectionStrategy{\boundary}{\assetsProcess_0}$ denote by $\tilde \assetsProcess_t := \assetsProcess_0 - \tilde A_t$ and $\assetsProcess_t := \assetsProcess_0 - A_t$ their asset position processes.
The liquidation times of $\tilde A$ and $A$ are $\tilde \tau:= \inf \{ t \ge 0 : \tilde A_t = \assetsProcess_0 \}$ and $\tau:= \inf \{ t \ge 0 : A_t = \assetsProcess_0 \}$, respectively.
By \cref{lemma: Laplace transform of inv loc time} (see also the discussion after \eqref{LaplacetrafoTimeToLiquid}), we have $T:= \tilde\tau \vee \tau < \infty$~a.s.
Fix initial impact $Y^{\tilde A}_{0-} = Y^A_{0-} = \boundary(\assetsProcess_0)$. 
To compare the strategies $A$ and $\tilde A$, consider the processes $G(\boundary(\assetsProcess_0); A)$ and $G(\boundary(\assetsProcess_0); \tilde A)$ from \eqref{eq:process G} for our candidate value function (which is $C^{2,1}$ by \cref{lemma:V is smooth}).
Since $V(\cdot, 0) = 0$, we have $\proceedsProcess_T(\tilde A) = G_T(\tilde A)$ and $\proceedsProcess_T(A) = G_T(A)$.
However, since $(Y^{\tilde A}, \tilde \assetsProcess)$ spends a positive amount of time in the region $\{ \scL V > 0 \}$ until time $T$ and always remains in the region  $\{ \scL V \ge 0 \}$, the perturbed strategy $\tilde A$ generates larger proceeds (in expectation) than~$A$. 

Indeed, by~\eqref{eq:Ito for G} applied for $G(\tilde A)$  and $G(A)$, using monotone convergence (twice) and arguments as in the proof of \cref{prop: supermatringale suffices} for the first equality (by \eqref{eq: expected proceeds with y and Phi} expected proceeds are bounded), and \cref{lemma: true martingales} for the stochastic integrals in the second line (noting the growth condition from \cref{lemma:V is smooth}), we get
\begin{align*}
	\EE[\proceedsProcess_\infty(\tilde A) &- \proceedsProcess_\infty(A)] = \lim_{n\rightarrow \infty}\EE[G_{n\wedge T }(\tilde A) - G_{n\wedge T}(A)] 
\\		&=  \lim_{n\rightarrow \infty} \EE\brackets[\Big]{ \int_0^{n\wedge T} \dots \diff W_t + \int_0^{n\wedge T} \dots \diff B_t + \int_0^{n\wedge T} \scL V(Y^{\tilde A}_t, \tilde \assetsProcess_t) \diff t } 
\\		&=  \EE\brackets[\Big]{\int_0^{T} \scL V(Y^{\tilde A}_t, \tilde \assetsProcess_t) \diff t } > 0\,.
\end{align*}
This contradicts \cref{thm: one-sided optimal y}, so $h'(0+) \le 0$ and hence the inequality in $\scS_1$ must hold.

It remains to consider the case $(y,\theta)\in \overline\scS_2$, where $\easySellV_y  = f(y) - f(y-\theta)$ and $\easySellV_{yy} = f'(y) - f'(y-\theta)$. Fix $y-\theta =: a \geq y_0$ and consider $\scL \easySellV$ as a function of $\theta$. We  have
\begin{align*}
	\scL \easySellV(y,\theta) 
		&= \frac{\impactVolatility^2}{2} \paren[\big]{f'(a+\theta) - f'(a)} 
			+ \paren[\big]{\sigma \correlationBW \impactVolatility - \beta(a+\theta)} \paren[\big]{f(a+\theta) - f(a)} 
	\\	&\qquad
			- \delta \int_a^{a+\theta} \!\! f(x) \diff x
	.
\end{align*}
Differentiating the right-hand side w.r.t.~$\theta$ we get
\(
	f(a+\theta) k(a+\theta),
\)
which is again decreasing in $\theta$ because $a \geq y_0$. Since at $\theta = 0$ we have $\scL \easySellV(y, \theta) = 0$ we deduce the desired inequality.
\end{proof}

\begin{remark}\label{rmk:conj about par cyl funs}
In the particular case when $\lambda=f'/f$ is constant, a more direct approach based on straightforward calculations leads to a conjecture on a property for quotients of Hermite functions. 
More precisely, to prove $h'(0+)\leq 0$ in this case it turns out to be sufficient to verify that the map $y_b\mapsto h'(0)$ is monotone in $[y_\infty, y_0]$, because at $y_\infty$ and $y_0$ one can check that $h'(0+) < 0$. 
The monotonicity in $y_b$ would then follow from the following conjectured property of the Hermite functions:
\[
	\text{For every $\nu < 0$, the function }\quad x\mapsto \frac{(H_{\nu-1}(x))^2}{H_{\nu}(x) H_{\nu - 2}(x)} \quad \text{ is decreasing}.
\]
Numerical computations indicate the validity of the this property but, to our best  knowledge, it is not yet proven and may be of independent interest. Note that such quotients of special functions are related to so called Turan-type inequalities, cf.\ \cite{BariczIsmail2013}.
\end{remark}

Now we have all the ingredients in place to complete the

\begin{proof}[Proof of \cref{thm:optimal strategy}.] \label{ProofofMainthm}
The function $V$ constructed in \eqref{eq: value fn constructed} is a classical solution of the variational inequality \eqref{eq:Variational} because of \cref{lemma:V is smooth,,lemma: wait inequality,,lemma:inequality in sell region}.
Thus, for each admissible strategy $A$ the process $G(y;A)$ from \eqref{eq:process G} is a supermartingale with $G_{0}(y;A)\leq G_{0-}(y;A)$: the growth condition on $V_y$ and $V$ from \cref{lemma:V is smooth} guarantees that the stochastic integral processes in \eqref{eq:Ito for G} are true martingales by an application of \cref{lemma: true martingales}, while the variational inequality gives the supermartingale property on $[0-, \infty)$.
Moreover, for the described strategy $A^*$, whose existence and uniqueness on $\closedStochasticInterval{0, \tau}$ follows from classical results, cf.~\cref{rmk: existence+uniqueness of strong RSDE solution}, the process $G(y;A^*)$ is a true martingale with $G_{0}(y;A^*) = G_{0-}(y;A^*)$ by our construction of $V$ and the validity of the variational inequality in the respective regions.
Therefore $A^*$ is an optimal strategy by \cref{prop: supermatringale suffices}.
Any other strategy will be suboptimal because the respective inequalities are strict in the sell and wait region, i.e., for any other strategy the process $G$ will be a strict supermartingale.

The Laplace transform formula \eqref{LaplacetrafoTimeToLiquid} was derived in \cref{lemma: Laplace transform of inv loc time} for a $\boundary$-reflected strategy when the state process starts on the boundary. 
If the state process starts in $Y_0=x$ in the wait region, the behavior of the process until time $\hittingTimeFromTo{x}{z}$ when it hits the boundary for the first time (at $z:=\boundary(\assetsProcess_0)$) is independent from future excursions from the boundary, and hence the multiplicative factor in \eqref{LaplacetrafoTimeToLiquid}, 
see e.g.~\cite[Prop.~V.50.3]{RogersWilliams87vol2}:
for $x < z \in \RR$ and $\alpha > 0$,
\(
	\EE\brackets{ \exp\paren{-\alpha \hittingTimeFromTo{x}{z}} } = \Phi_\alpha(x) / \Phi_\alpha(z)
	.
\)
\end{proof}

\end{document}